\newtheorem{remark}[theorem]{Remark}
\newcommand{\rmF}{\mathrm{F}}
\newcommand{\rmH}{\mathrm{H}}
\newcommand{\rmT}{\mathrm{T}}
\renewcommand{\tau}{q}
\definecolor{BLUE}{RGB}{0, 0, 255}
\begin{document}
\title{A Novel Adaptive Low-Rank Matrix Approximation Method for Image Compression and Reconstruction\thanks{This work is  supported  in part by the  National Key Research and Development Program of China under grant 2023YFA1010101; the National Natural Science Foundation of China under grants 11971243, 12171210, 12090011, 12271467 and 11771188;  the ``QingLan'' Project for Colleges and Universities of Jiangsu Province (Young and middle-aged academic leaders);
the Major Projects of Universities in Jiangsu Province under grant 21KJA110001;  the Natural Science Foundation of Fujian Province of China under grant 2022J01378;  and the Postgraduate Research \& Practice Innovation Program of Jiangsu Province under grants KYCX24\_1402 and KYCX25\_3153.}
}
\author{
Weiwei Xu\thanks{School of Mathematics and Statistics, Nanjing
University of Information Science and Technology, Nanjing 210044,
P.R. China (e-mail: wwxu@nuist.edu.cn).} \and
Weijie Shen\thanks{School of Mathematics and Statistics, Nanjing
University of Information Science and Technology, Nanjing 210044,
P.R. China (e-mail: swj@nuist.edu.cn).} \and
Chang Liu\thanks{School of Mathematics and Statistics, Jiangsu Normal University, Xuzhou 221116, P.R. China (e-mail: jsnuliuchang@163.com).} 
\and
Zhigang Jia\thanks{Corresponding author. School of Mathematics and Statistics \& RIMS, Jiangsu Normal University, Xuzhou 221116, P.R. China (e-mail: zhgjia@jsnu.edu.cn).}
}

\maketitle
\begin{abstract}
Low-rank matrix approximation plays an important role in various applications such as image processing, signal processing and data analysis. The existing methods require a guess of  the ranks of matrices that represent images  or involve additional  costs to determine the ranks. A novel efficient orthogonal decomposition with automatic basis extraction (EOD-ABE) is proposed to compute the optimal low-rank matrix approximation with adaptive identification of the optimal rank.   By introducing a randomized basis extraction mechanism, EOD-ABE eliminates the need for additional rank determination steps and can compute a rank-revealing approximation to a  low-rank matrix. With a computational complexity of $O(mnr)$, where $m$ and $n$ are the dimensions of the matrix and $r$ is its rank,  EOD-ABE achieves significant speedups compared to the state-of-the-art methods. Experimental results demonstrate the superior speed, accuracy and robustness of EOD-ABE and indicate that EOD-ABE is a powerful tool for fast image compression and reconstruction and hyperspectral image dimensionality reduction in large-scale applications.
\end{abstract}
\begin{keywords}
  image compression and reconstruction, low-rank approximation, matrix decomposition, randomized algorithms,  dimensionality reduction
\end{keywords}

\begin{AMS}
  65F55, 68U10
  \end{AMS}
\section{Introduction}
A well-known image compression and reconstruction method is the low-rank matrix approximation that represents images with matrices and compute their low-rank approximations. A key challenge is the unknown matrix rank, which often requires computationally expensive estimation. This uncertainty hampers the ability to efficiently achieve optimal low-rank approximations. To tackle this issue, we introduce a novel efficient orthogonal decomposition with automatic basis extraction (EOD-ABE) for low-rank matrix approximation, which precisely identifies the optimal rank while delivering high-quality approximations.

Matrix decompositions, including singular value decomposition (SVD), column-pivoted QR (CPQR), and UTV decomposition \cite{hmt11, kc20, kc21, kl18}, are widely used for low-rank approximations to images and provide detailed factorizations of input matrices. Among these, UTV decomposition \cite{kl18} is computationally more efficient than SVD and reveals information about the matrix's numerical null space.  An obstacle to the widespread application of   these methods is that they are computationally intensive, requiring significant resources for eigenvalue and eigenvector computations, pivoting, and iterative processes. Furthermore, applying these methods to large matrices in parallel or distributed environments often demands extensive data transfer and synchronization, limiting their scalability in real-world applications.

Randomized methods \cite{di19, kc19, s19} have gained prominence as efficient alternatives for low-rank approximations due to their computational efficiency, robustness, and high accuracy. The strategy of the randomized method is as follows \cite{hmt11}: first, a random sampling strategy reduces the dimensionality of the input matrix; second, a complete decomposition is performed on the reduced matrix; and finally, the transformed matrix is projected back to the original space. Compared to classical methods, randomized approaches can better leverage parallel architectures, offering a particularly advantage in large-scale scenarios.
A critical limitation of existing randomized methods is their reliance on prior knowledge of the matrix rank. Estimating the rank in advance is computationally intensive and can become a significant bottleneck for large matrices. To overcome this challenge, this paper introduces a novel randomized matrix decomposition algorithm that adaptively extracts the matrix basis and automatically determines its rank.

In recent years, randomized low-rank approximation algorithms have gained attention for their applications in image compression and reconstruction. For example, the projection-based partial QLP (PbP-QLP) algorithm introduced by Kaloorazi and Chen \cite{kc21} employed randomized sampling and non-pivoted QR decomposition to approximate the partial QLP (p-QLP) decomposition, achieving significant computational speedup by removing the need for pivoting. Similarly, Feng and Yu \cite{fy2023} proposed the fast adaptive randomized PCA (farPCA) algorithm, which replaces QR decomposition with matrix multiplications and inversions of small matrices, offering improved performance in multi-threaded environments.  However, the iterative nature and matrix operations in these methods lead to higher computational complexity compared to classical PCA techniques. Unlike these approaches, our method incorporates a rank-revealing mechanism that adaptively determines the optimal rank during image compression and reconstruction. This capability overcomes the limitations of existing methods and delivers superior performance, particularly in scenarios where the matrix rank is not known a priori. The adaptive rank determination capability makes our algorithm a robust and versatile tool for image compression and reconstruction.

Now we illustrate the advantage of our low-rank approximation method with a rank-revealing toy example. In Figure \ref{rgb_sv_chess220} (a),  the color image used for illustration is of size  $220 \times 220$ and  has three color channels denoted by three real matrices $R$, $G$ and $B$. Their ranks  are $50$, $52$ and $53$,  respectively.  Suppose that  such rank information is unknown. Then the existing methods that need pre-decided ranks before computing the low-rank approximations are not suitable to compute the optimal low-rank approximation. Other existing methods that can predict numerical ranks may achieve at the optimal low-rank approximation, but may not find the correct ranks. For instance, Adaptive PCA \cite{dyxl2020} and farPCA \cite{fy2023} predict that the ranks of three color channels are the same $55$. Our method can compute the optimal low-rank approximation and reveal the exact ranks $50$, $52$ and $53$ of three color channels; see Figure \ref{rgb_sv_chess220} (b). Moreover, our method only takes about $0.8$ seconds to obtain such exact low-rank reconstruction. (See Section \ref{sec:numexp} for more numerical comparisons).

Recall that image compression and reconstruction refers to the process of compressing high-dimensional image data via low-rank matrix decomposition, and subsequently reconstructing it from the compressed representation with minimal or without information loss. Our adaptive low-rank approximation method decomposes a matrix $A \in \mathbb{C}^{m \times n}$ into the form $A = UDV^\mathrm{H}$, where $U \in \mathbb{C}^{m \times r}$ and $V \in \mathbb{C}^{n \times r}$ are column-orthogonal matrices, $D \in \mathbb{C}^{r \times r}$ is an upper triangular matrix, and $r$ is the numerical rank of $A$.   If $r<\sqrt{m^2+n^2+4mn+m+n+1/4}-(m+n+1/2)$, then the total number of characters in bytes of three matrix factors  $U$, $D$ and $V$ are  smaller than that of $A$.   In this case, these three matrix factors are suitable for storage and transmission. The reconstruction of the original data only requires multiplying them together. 
For instance,  three color channels of color image in Figure \ref{rgb_sv_chess220} (a) are represented by three nonnegative matrices with $m=n=220$ and their ranks are $r=50,~52$ and $53$, respectively. 
Applying our method, the total number of characters in bytes can be compressed from  $145200$ to about $72284$.
Thus, the total amount of data required to transmit the full-color image is reduced by $50.22$\%.    In fact,  when the image exhibits an approximately low-rank structure, our framework adaptively determines its numerical rank and computes a precise low-rank factorization. This enables lossless compression and accurate reconstruction under the low-rank assumption.

\begin{figure}
  \centering
    \includegraphics[width=5.0in]{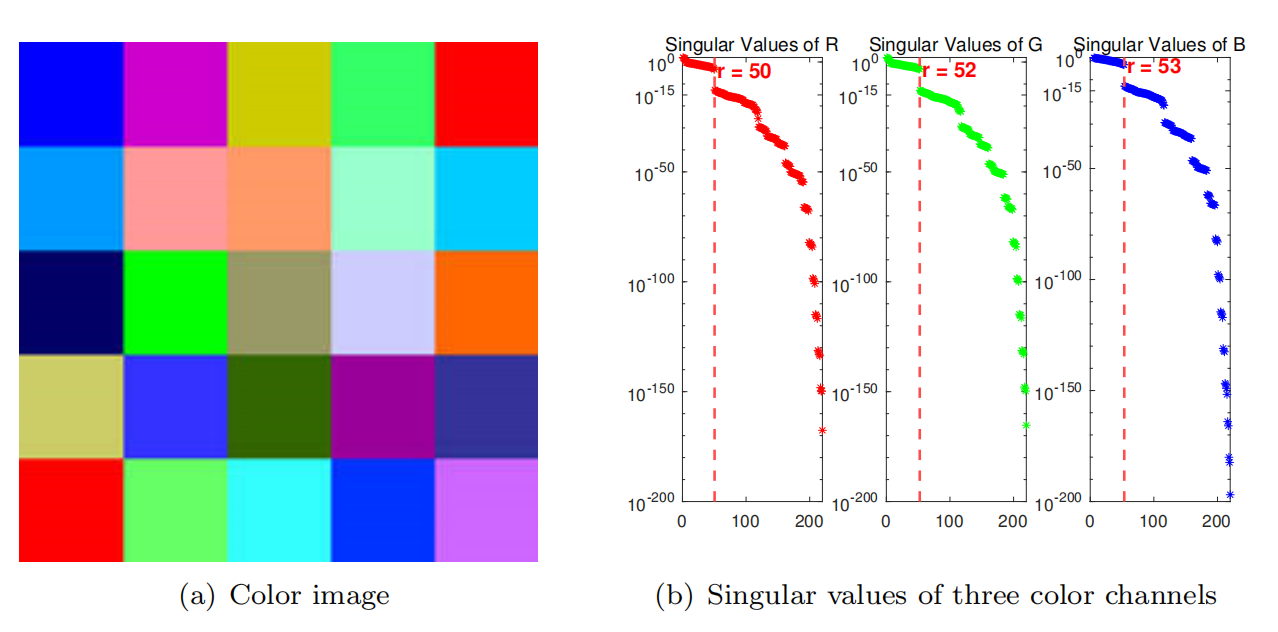}
  \caption{A color image with dimension $220 \times 220$  and the singular values of its color channels (denoted by $R$, $G$ and $B$). The vertical red dotted lines show the numerical ranks calculated by Algorithm \ref{alg:A} with setting a threshold $\varepsilon = 10^{-12}$.}\label{rgb_sv_chess220}
\end{figure}

The contributions of this paper are threefold:
\begin{itemize}
\item
\textbf{Adaptive basis extraction for low-rank approximation with unknown rank}:
This paper proposes a randomized algorithm designed for matrices with unknown rank. The algorithm adaptively constructs orthogonal bases and identifies the numerical rank without relying on prior knowledge. This adaptive mechanism not only reveals the numerical rank but also eliminates the need for computationally expensive pre-estimations of rank, addressing a major limitation in traditional low-rank approximation methods. By determining the optimal rank during the approximation process, the method achieves both high efficiency and broad applicability.
\item
\textbf{Efficient orthogonal decomposition with automatic basis extraction}:
This paper introduces a novel EOD-ABE to compute a low-rank approximation to a matrix with unknown rank. The adaptive basis extraction is naturally combined with the UTV decomposition, allowing the algorithm to improve both computational speed and numerical accuracy while removing the need to predefine the rank.  EOD-ABE provides a robust and effective solution for large-scale low-rank approximation problems in various applications.
\item
\textbf{The computational complexity is reduced to $O(mnr)$}: With a computational complexity of $O(mnr)$, where $m$ and $n$ are the dimensions of the matrix and $r$ is its rank,  EOD-ABE achieves significant speedups compared to the state-of-the-art methods. 
In the hyperspectral image dimensionality reduction, our algorithm reduces computation time by over 20\% compared to existing rank-adaptive methods, while maintaining high accuracy.
\end{itemize}

The rest of this paper is organized as follows. In Section 2, we provide an overview of recent randomized algorithms for low-rank matrix approximation. Section 3 presents low-rank approximation algorithms with unknown rank, accompanied by an analysis of the algorithmic complexity. A detailed error analysis of the proposed algorithm is presented. Section 4 includes several simulation experiments and the application of our algorithm.

\section{Related works}
This section reviews state-of-the-art randomization techniques for low-rank matrix approximation. Over the past few decades, randomized algorithms for low-rank approximations have gained significant attention due to their computational efficiency and suitability for parallel computing, facilitating their application on modern high-performance platforms.

Throughout this paper, by
$\mathbb{R}^{m\times n}$ and $\mathbb{C}^{m\times n}$ we denote the sets of $m\times n$ matrices with entries in the real number field, and $m\times n$ matrices with entries in the complex number field, respectively.
The symbols $I_{n}$ and $O_{m\times n}$ represent the identity matrix of order $n$ and $m\times n$ zero matrix, respectively.
For a real matrix $A\in\mathbb{R}^{m\times n}$, $A^{\rmT}$ denotes its transpose.
For a complex matrix $A\in\mathbb{C}^{m\times n}$, by $A^{\rmH}$, $A^{-1}$, $\mathrm{rank}(A)$ and $\mathrm{tr}(A)$ we denote the conjugate transpose, inverse, rank and trace of matrix $A$, respectively. We use $\|\cdot\|,\|\cdot\|_{2}$ and $\|\cdot\|_{\rmF}$ to denote the unitary invariant norm, spectral norm and Frobenius norm of a matrix, respectively. $A\preceq B$ denotes $A-B$ is negative semi-definite. $A\succeq B$ denotes $A-B$ is positive semi-definite. In this paper, $\mathbb{E}$ specifically denotes expectation with respect to the random test matrix.

\subsection{Randomized SVD}
Halko, Martinsson and Tropp \cite{hmt11} presented a modular framework for constructing randomized algorithms for matrix decompositions, emphasizing the critical role of random sampling in capturing the dominant action of a matrix. Their approach, known as randomized SVD (RSVD), projects the input matrix onto a randomly generated low-dimensional subspace, effectively preserving its key features. The method then applies QR decomposition and SVD to the reduced matrix to construct the low-rank approximation. They also introduced a single-pass variant, the two-sided randomized SVD (TSR-SVD), which processes data in a single iteration. Extensive theoretical analysis and empirical studies confirmed that RSVD and TSR-SVD outperform traditional methods in accuracy, efficiency, and robustness, making them powerful tools for large-scale matrix computations.

An adaptive randomization method is also proposed in \cite{hmt11} and  called adaptive randomized range finder. In this paper, we refer to its combination with RSVD as Adaptive RSVD. The algorithm uses random projection to build an orthogonal basis in an iterative manner while adaptively tuning the sampling strategy to satisfy a prescribed error tolerance. 
During the iterative process, the proposed algorithm increments column by column, which involves calling the underlying code for Schmidt orthogonalization. As a result, the computation time is relatively long in practical computations.
The convergence condition of Adaptive RSVD is achieved by selecting re-orthogonalized vectors and evaluating the relationship between their maximum norm and a threshold, which introduces some computational overhead. We will improve it later in Algorithm \ref{alg:qr}.

Recently,  a fast SVD (FSVD) algorithm that further accelerates low-rank approximation is proposed in  \cite{nsr}.  This method leverages  random renormalization vectors and the Gram-Schmidt process to efficiently approximate the range of the input matrix and employs  a revised Lanczos process to rapidly compute the SVD.

\subsection{Randomized UTV decomposition}
Kaloorazi and Lamare \cite{kl18} introduced the compressed randomized UTV (CoR-UTV) decomposition, which integrates the rank-revealing decomposition and the power method. This decomposition approximates low-rank input matrices through randomized sampling and is defined as follows: 
\begin{definition}[UTV decomposition \cite{s92,s93}] 
For a matrix $A \in \mathbb{C}^{m \times n}$, the UTV decomposition is 
\begin{equation*}
A = UTV^{\rmH},
\end{equation*}
where $U \in \mathbb{C}^{m \times n}$ and $V \in \mathbb{C}^{n \times n}$ have orthonormal columns, and $T$ is a triangular matrix. 
\end{definition}

The CoR-UTV algorithm requires multiple passes over the data and incurs a computational cost of $O(mnr)$ floating-point operations. Designed to harness the capabilities of modern computing architectures, the algorithm exhibits high computational efficiency.

Randomized algorithms for rank-revealing decompositions have seen substantial development in recent years. Martinsson, Quintana-Orti and Heavner \cite{mqh2019} introduced the blocked randomized UTV (randUTV) algorithm, which utilizes a block-structured approach and randomized subspace iteration to efficiently approximate the singular vectors of a matrix. While computationally effective, the practical implementation of randUTV requires advanced memory management and parallel optimization to leverage modern hardware fully.

Kaloorazi and Chen \cite{kc20} proposed the randomized pivoted two-sided orthogonal decomposition (RP-TSOD) algorithm, which applies random projection to reduce the dimensionality of the input matrix, followed by column-pivoted QR (CPQR) decomposition. This method delivers approximate dominant singular bases and singular values, supported by comprehensive error bounds covering the low-rank approximation error, the largest approximate singular values, and the canonical angles between the approximate and exact singular subspaces.

To address the limitations of pivoting in traditional UTV methods, Kaloorazi and Chen \cite{kc21} also developed the PbP-QLP algorithm. This approach eliminates pivoting, enabling it to achieve high computational efficiency while maintaining accuracy. PbP-QLP is particularly well-suited for modern computational architectures, making it a practical choice for large-scale applications. Collectively, these algorithms highlight the versatility and effectiveness of randomized techniques in tackling low-rank approximation problems.

\subsection{Randomized PCA}
Most existing randomized low-rank approximation methods often require the rank $r$ to be specified in advance, a task that is nontrivial in practical applications. To address this limitation, Ding et al. \cite{dyxl2020} introduced a fast adaptive PCA framework capable of automatically determining the dimensionality parameter. This framework significantly accelerates computations for large, sparse matrices, demonstrating its utility in large-scale data analysis. Similarly, Feng and Yu \cite{fy2023} proposed the farPCA algorithm, which optimizes parallel computing efficiency by replacing QR decomposition with matrix multiplication and the inversion of small matrices. While this approach improves computational speed, the choice of block size plays a critical role in ensuring accurate rank approximation.

Given a low-rank matrix $A$ with the rank unknown, there are still no algorithms that can be theoretically proved to yield the  rank-revealing approximation to $A$.

\section{Efficient orthogonal decomposition with Automatic Basis Extraction}
In this section, we construct a novel randomized algorithm EOD-ABE for computing the optimal  low-rank  approximation to $A$. This new algorithm can compute a rank-revealing approximation to a  low-rank matrix $A$.  
The flowchart of the entire algorithm is given in Figure \ref{liuchengtu}.

\begin{figure}
  \centering
      \includegraphics[width=5.0in]{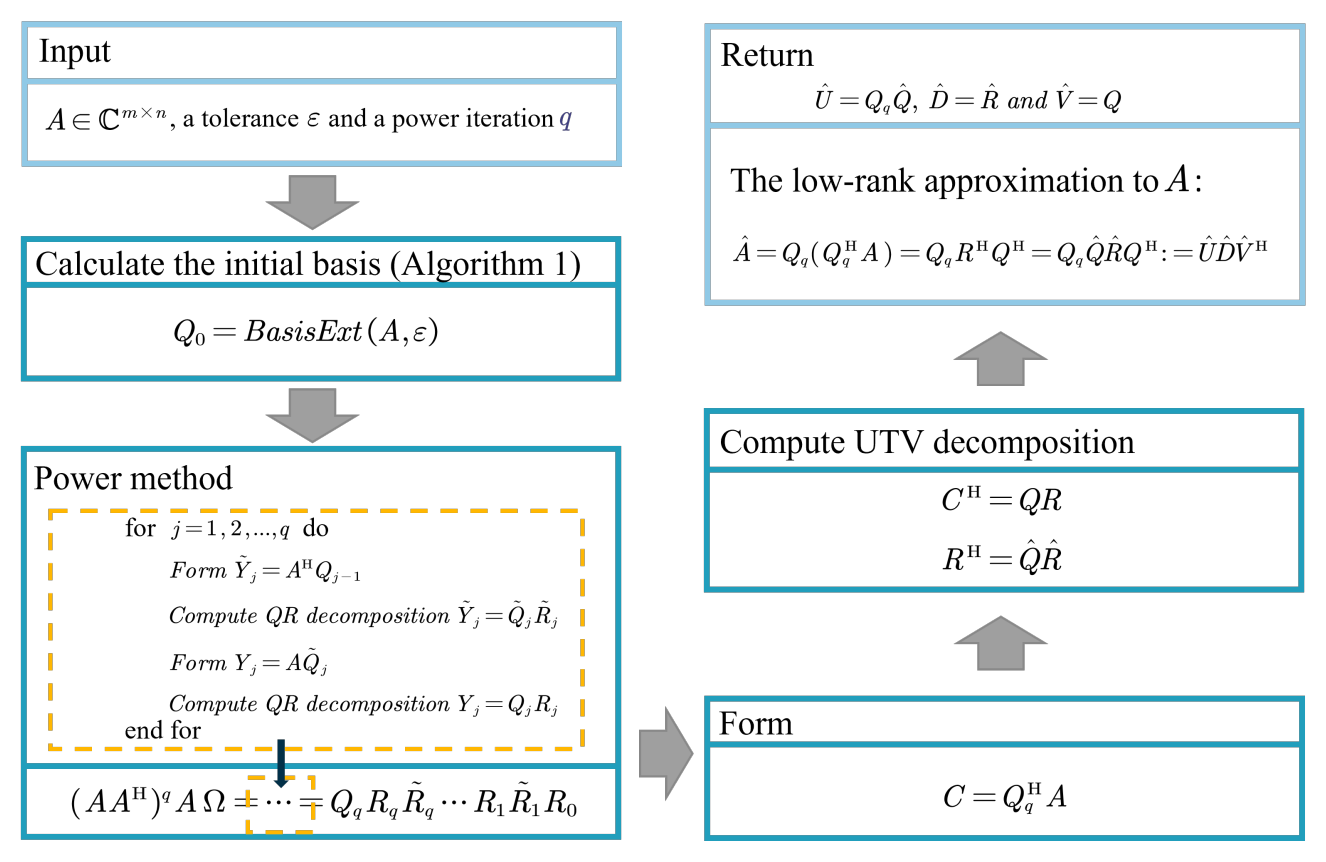}
  \caption{Flowchart of the entire algorithm.}\label{liuchengtu}
\end{figure}

We focus on a matrix $A\in\mathbb{C}^{m\times n}$ with $m\geq n$, noting that the proposed results can be readily generalized to the case $m<n$.  
For convenience, we introduce a rank-revealing UTV decomposition,
\begin{equation}\label{udvd}
A = UDV^{\rmH},
\end{equation}
where $U \in \mathbb{C}^{m \times r}$ and $V \in \mathbb{C}^{n \times r}$ have orthonormal columns, and  $D\in \mathbb{C}^{r \times r}$ is an upper triangular matrix of full rank.
Let $\rank(A)=r$, then a UTV-based optimal rank-revealing approximation to $A$ is defined by 
\begin{equation}\label{udvd2}
\hat{A} = \hat{U}\hat{D}\hat{V}^{\rmH}={\rm arg}\min\limits_{\rank(B)=r}\|B-A\|_{\rmF},
\end{equation}
 where $\hat{D}\in\mathbb{C}^{r\times r}$ is an upper triangular matrix of full rank,  $\hat{U}\in\mathbb{C}^{m\times r}$ and $\hat{V}\in\mathbb{C}^{n\times r}$ have orthonormal columns.

\subsection{Randomized algorithms for basis extraction}
First, we introduce a randomized algorithm for basis extraction that can automatically identify the rank of $A$. 
The main idea is as the  following:
\begin{itemize}
\item  We employ an iterative scheme to extract the basis incrementally in batches since  the size of the approximated basis of $U$ is unknown and can not be  a priori.  Alternatively, the basis could be computed one vector at a time, but this approach is less efficient under modern computer architectures. Therefore, we introduce a block size hyperparameter $k$ to control the batch size, enabling more efficient utilization of the memory hierarchy. As noted in \cite{p21}, selecting $k$ in the range of 10 to 100 is appropriate for many environments.

\item At each iteration of basis extraction,  we multiply $A$ from the left side by a projection matrix generated in the previous iterations, and then sample the columns of the projected matrix by  a standard Gaussian random matrix of size $n \times k$.
 Subsequently, the QR decomposition  the resulted matrix is computed,  yielding an additional batch of basis vectors. The goal is to construct an orthonormal matrix  $Q$ satisfying
$$\|(I - QQ^{\rmH}) A\| \leq\varepsilon.$$
\end{itemize}

The pseudo code is given in Algorithm \ref{alg:qr}, named by {\it BasisExt}.   
This stopping criterion at step 6 is reformulated in terms of evaluating the diagonal elements of the upper triangular matrices $T_j$'s .

\begin{algorithm}
    \caption{Randomized  basis extraction algorithm ({\it BasisExt})}
    \label{alg:qr}
    \renewcommand{\algorithmicrequire}{\textbf{Input:}}
    \renewcommand{\algorithmicensure}{\textbf{Output:}}
    \begin{algorithmic}[1]
    \REQUIRE A low-rank matrix $A\in\mathbb{C}^{m\times n}$, a tolerance $\varepsilon>0$ and a blocksize $1\leq k<n$. Let $s = \lfloor n/k\rfloor$ and $f_{j} = k$, $j=1,2,\ldots,s$, $f_{s+1} = n-sk$. 
    \ENSURE An approximated basis $Q$ of $U$ for $A = UDV^{\rmH}$ as in \eqref{udvd}.   

\STATE $Q = [\;]$.

\FOR {$j=1,2,\ldots,s+1$} \label{for}

\STATE Draw an $n\times f_{j}$ standard Gaussian matrix $\Omega_{j}$.

\STATE Set $Y_{j} = (I_{m}-QQ^{\rmH})A\Omega_{j}$.

\STATE Compute QR decomposition $Y_{j} = P_{j}T_{j}$, where $P_{j} \in \mathbb{C}^{m\times f_j}$, $T_{j} \in \mathbb{C}^{f_j\times f_j}$.

\IF{find the minimum $1\leq \ell\leq f_j$ with $|T_{j}(\ell,\ell)|\leq\varepsilon$} \label{stop}

\STATE $Q=[Q,P_{j}(:,1:\ell-1)]$.

\STATE break.

\ELSE

\STATE $Q=[Q,P_{j}]$.

\ENDIF

\ENDFOR

        \RETURN $Q$. \label{return}
    \end{algorithmic}
\end{algorithm}

\begin{remark}
     In contrast to the algorithms proposed in \cite{dyxl2020,fy2023,hmt11,mqh2019,nsr} that do not require knowing the matrix rank in advance, Algorithm \ref{alg:qr} exhibits several distinct advantages and innovations:
\begin{itemize}
  \item Algorithm \ref{alg:qr} iteratively extracts an orthogonal basis and tracks the diagonal entries of the upper triangular matrix obtained from QR decomposition. Unlike Adaptive PCA \cite{dyxl2020} and farPCA \cite{fy2023}, the numerical rank is not constrained by a predetermined block size. This approach determines the numerical rank automatically and accurately, eliminating the need for additional estimation or manual tuning.
  \item Algorithm \ref{alg:qr} employs a block iteration scheme for basis extraction. This design improves memory efficiency and leverages parallel computing. Unlike the Adaptive RSVD algorithm  \cite{hmt11}, it does not require a preset target rank or oversampling parameters. As a result, it better captures low-rank structures and reduces approximation errors from parameter mismatches.
  \item The randUTV algorithm \cite{mqh2019} requires complex block strategies and memory management, has high implementation difficulty in distributed environments. The theoretical cost of the randUTV of an $m \times n$ matrix is $O(mn^2)$ floating-point operations. Algorithm \ref{alg:qr} sharply reduces communication overhead through localized QR decomposition and small matrix operations.
  \item Theoretically, both FSVD \cite{nsr} and Algorithm \ref{alg:qr} have the computational complexity of $O(mnr)$. But, FSVD adopts a column-wise processing scheme, whereas Algorithm \ref{alg:qr} uses a blocked scheme that enables Level-3 operations, resulting in reduced runtime. 
  In addition,  FSVD requires storing a random matrix with gradually increasing dimensions, while Algorithm~\ref{alg:qr} generates $k$ columns at a time without storage. After obtaining the orthonormal basis, FSVD requires approximately $mr^2 + 2nr^2 + \frac{4}{3}r^3$ floating-point operations to compute the SVD, while Algorithm~\ref{alg:qr} only requires about $mr^2 + 2nr^2 + \frac{2}{3}r^3$ to compute the UTV decomposition. This represents a reduction of roughly $\frac{2}{3}r^3$ in computational cost.
\end{itemize}
   \end{remark}
   
\begin{remark}
  The choice of $\varepsilon$ directly affects the determination of the matrix's approximate rank, as it governs the truncation of small singular values.  For illustrative examples on how varying $\varepsilon$ influences the approximation rank and the overall approximation quality, we refer the reader to Section 4.4.
\end{remark}

Now, we analyze the rank-revealing property of Algorithm \ref{alg:qr}.
\begin{lemma}[\cite{hmt11}]\label{lem1}
Let $\Omega$ be an $m\times n$ standard Gaussian matrix and let $U=[U_{1}^{\rmH}, U_{2} ^{\rmH}]^{\rmH}$ $
\in\mathbb{C}^{m\times m}$ and $V=[V_{1},V_{2}]\in\mathbb{C}^{n\times n}$ be orthonormal matrices. Denote  $U\Omega=[(U_{1}\Omega)^{\rmH}, (U_{2}\Omega )^{\rmH}] ^{\rmH}$ $:=[\Omega_{1}^{\rmH},\Omega_{2}^{\rmH}]^{\rmH}$ and
$\Omega V=\Omega[V_{1},V_{2}]=[\Omega V_{1},\Omega V_{2}]:=[\Lambda_{1},\Lambda_{2}]$. Then we have the following
conclusions.
\begin{enumerate}
  \item If $m \leq n$, then $\Omega$ is a full row rank matrix with probability one. If $m \geq n$,
then $\Omega$ is a full column rank matrix with probability one. Then $\mathrm{rank}(\Omega) =
\min\{m, n\}$.
  \item $U\Omega$, $\Omega V$, $\Omega_{1}$, $\Omega_{2}$, $\Lambda_{1}$ and $\Lambda_{2}$ are also standard Gaussian matrices.
\end{enumerate}
\end{lemma}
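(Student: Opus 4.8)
The plan is to establish the two assertions separately, since both are classical properties of the Gaussian ensemble: assertion (i) is an almost-sure genericity statement, while assertion (ii) follows from the orthogonal (unitary) invariance of the standard Gaussian law. Throughout I would keep the argument convention-independent by using only that the law of a standard Gaussian matrix has a density that is absolutely continuous with respect to Lebesgue measure and depends on the matrix only through its Frobenius norm.

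For (i), I would first reduce to the case $m\le n$: since $\Omega^{\rmH}$ is again a standard Gaussian matrix and $\mathrm{rank}(\Omega)=\mathrm{rank}(\Omega^{\rmH})$, the case $m>n$ follows by conjugate transposition. Assuming $m\le n$, I would show that the rows $\omega_1,\dots,\omega_m$ of $\Omega$ are almost surely linearly independent, by induction on the number of rows. Almost surely $\omega_1\ne 0$; and if $\omega_1,\dots,\omega_j$ are linearly independent, they span a proper, hence Lebesgue-null, subspace $S_j\subsetneq\mathbb{C}^{n}$ that is measurable with respect to $\omega_1,\dots,\omega_j$, while $\omega_{j+1}$ is independent of them and has a law absolutely continuous with respect to Lebesgue measure; therefore $\mathbb{P}(\omega_{j+1}\in S_j\mid\omega_1,\dots,\omega_j)=0$. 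Taking expectations and chaining the $m-1$ steps gives $\mathrm{rank}(\Omega)=m=\min\{m,n\}$ with probability one. (Equivalently, one may note that the determinant of the leading $m\times m$ submatrix is a nonzero polynomial in the entries, whose zero set has measure zero.)

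For (ii), the key fact I would isolate first is: if $\Phi\in\mathbb{C}^{p\times q}$ is standard Gaussian and $P\in\mathbb{C}^{p\times p}$, $\widetilde P\in\mathbb{C}^{q\times q}$ have orthonormal columns (hence are unitary), then $P\Phi\widetilde P$ is again standard Gaussian. Indeed, viewing $\Phi$ as a vector, its law has a density that depends on $\Phi$ only through $\|\Phi\|_{\rmF}^2$; the map $\Phi\mapsto P\Phi\widetilde P$ is linear with Jacobian of modulus one; and $\|P\Phi\widetilde P\|_{\rmF}=\|\Phi\|_{\rmF}$ by unitary invariance of the Frobenius norm, so $P\Phi\widetilde P$ has the same density as $\Phi$. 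Applying this with $\widetilde P=I_n$ shows $U\Omega$ is standard Gaussian, and with $P=I_m$ shows $\Omega V$ is standard Gaussian. Finally, $\Omega_1=U_1\Omega$ and $\Omega_2=U_2\Omega$ are precisely the row blocks of $U\Omega$ induced by the partition $U=[U_1^{\rmH},U_2^{\rmH}]^{\rmH}$, and any matrix formed from a subset of the rows of a standard Gaussian matrix is again standard Gaussian (its entries are still independent standard Gaussians); likewise $\Lambda_1=\Omega V_1$ and $\Lambda_2=\Omega V_2$ are the column blocks of $\Omega V$ and hence standard Gaussian.

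I do not expect a genuine obstacle here, since this is a self-contained restatement of a result already recorded in \cite{hmt11}. The only points that need care are the measure-theoretic bookkeeping in (i)—correctly identifying the null event and combining absolute continuity with the independence of the successive rows—and, in (ii), being explicit that the notion of ``standard Gaussian'' used in this paper is preserved under passing to a submatrix formed by selecting a subset of rows or columns, which is immediate from the independence of the entries but worth stating.
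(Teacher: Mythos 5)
The paper offers no proof of this lemma: it is imported directly from \cite{hmt11} (it is the standard genericity and rotational-invariance property of Gaussian matrices, cf.\ Proposition 10.1 there), so there is nothing in the text to compare against line by line. Your argument is correct and is exactly the standard route underlying the cited result — almost-sure full rank via absolute continuity of each successive row with respect to Lebesgue measure (or the nonvanishing-polynomial/determinant argument), and invariance of the Gaussian density under left/right multiplication by unitaries together with the observation that row or column submatrices of a standard Gaussian matrix are again standard Gaussian — so it can stand as a self-contained justification of the lemma.
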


\begin{lemma}\label{lem2}
Let $A \in \mathbb{C}^{m\times n}$ $(m\geq n)$ with $\mathrm{rank}(A)=r$ and $\Omega$ be an $n\times d$ standard Gaussian matrix, $1\leq d\leq n$. Then, $\mathrm{rank}(A\Omega)=\min\{r,d\}$.
\end{lemma}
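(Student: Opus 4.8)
The plan is to use the rank factorization of $A$ together with the invariance properties of Gaussian matrices from Lemma \ref{lem1}. Write a full-rank factorization $A = FG$, where $F \in \mathbb{C}^{m\times r}$ has rank $r$ and $G \in \mathbb{C}^{r\times n}$ has rank $r$; for instance, one may take $F$ to consist of $r$ independent columns of $A$ (or an orthonormal basis for the column space). Then $A\Omega = F(G\Omega)$, so it suffices to show $\mathrm{rank}(G\Omega) = \min\{r,d\}$, because $F$ has full column rank $r$ and left-multiplication by a full-column-rank matrix preserves rank: $\mathrm{rank}(F M) = \mathrm{rank}(M)$ for any $M$ with $r$ rows.

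Next I would reduce $G\Omega$ to a standard Gaussian matrix. Compute a QR-type or SVD-type factorization $G = R W^{\rmH}$ where $R \in \mathbb{C}^{r\times r}$ is invertible and $W \in \mathbb{C}^{n \times r}$ has orthonormal columns (e.g. take $W$ to be the first $r$ right singular vectors of $G$, and $R$ the corresponding $r\times r$ block). Complete $W$ to a unitary $[W, W_\perp] \in \mathbb{C}^{n\times n}$. Then $G\Omega = R W^{\rmH}\Omega = R \Lambda_1$, where $\Lambda_1 := W^{\rmH}\Omega$ is, by part (2) of Lemma \ref{lem1} (applied with $V = [W, W_\perp]$, reading off the top block of the rotated Gaussian), again a standard Gaussian matrix, now of size $r \times d$. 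Since $R$ is invertible, $\mathrm{rank}(G\Omega) = \mathrm{rank}(\Lambda_1)$. Finally, by part (1) of Lemma \ref{lem1}, the $r\times d$ standard Gaussian matrix $\Lambda_1$ has rank $\min\{r,d\}$ with probability one. Chaining the equalities gives $\mathrm{rank}(A\Omega) = \min\{r,d\}$.

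The main obstacle is purely bookkeeping: one must be careful that each rank equality I invoke is genuinely valid — specifically that $\mathrm{rank}(FM) = \mathrm{rank}(M)$ requires $F$ to have full column rank (which holds since $\mathrm{rank}(F)=r$ and $F$ has $r$ columns), and that $\mathrm{rank}(RN) = \mathrm{rank}(N)$ requires $R$ invertible. There is no analytic difficulty; the only subtlety is correctly applying Lemma \ref{lem1} to identify $W^{\rmH}\Omega$ as Gaussian, which amounts to noting that $W^{\rmH}$ is the top block of a unitary matrix and that the conclusion of Lemma \ref{lem1}(2) about $\Omega_1$ (a sub-block of a left-rotated Gaussian) applies verbatim after transposing. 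A clean way to present this is to state once and for all the auxiliary fact that multiplying a standard Gaussian matrix on either side by a matrix with orthonormal rows/columns yields a (smaller) standard Gaussian matrix, and then the whole argument is three lines.
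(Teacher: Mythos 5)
Your proposal is correct and follows essentially the same route as the paper: reduce $A\Omega$ to a full-column-rank matrix times an $r\times d$ standard Gaussian matrix, then invoke Lemma \ref{lem1} for the almost-sure rank $\min\{r,d\}$. The paper simply uses the SVD of $A$ directly, so that $A\Omega=\tilde{U}_r\Sigma_r(\tilde{V}_r^{\rmH}\Omega)$ gives the factor with orthonormal rows in one step, whereas you pass through a generic rank factorization $A=FG$ and then orthogonalize $G$ --- a purely cosmetic difference.
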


\begin{proof}
Suppose the SVD of $A$ is 
\begin{eqnarray}\label{asvd}
  A = \tilde{U}_{m\times n}\Sigma_{n\times n} \tilde{V}_{n\times n}^{\rmH} 
   = \left[
          \begin{array}{cc}
            \tilde{U}_{r} & \tilde{U}_{0} \\
          \end{array}
        \right]
        \left[
          \begin{array}{cc}
            \Sigma_{r} & O \\
            O & \Sigma_{0} \\
          \end{array}
        \right]
        \left[
          \begin{array}{c}
            \tilde{V}_{r}^{\rmH} \\
            \tilde{V}_{0}^{\rmH} \\
          \end{array}
        \right],
\end{eqnarray}
where $\tilde{U}_{r}\in\mathbb{C}^{m\times r}$, $\tilde{U}_{0}\in\mathbb{C}^{m\times(n-r)}$, $\tilde{V}_{r}\in\mathbb{C}^{n\times r}$ and $\tilde{V}_{0}\in\mathbb{C}^{n\times(n-r)}$ have orthonormal columns, $\Sigma_{r}\in\mathbb{R}^{r\times r}$ and $\Sigma_{0}\in\mathbb{R}^{(n-r)\times(n-r)}$ are diagonal matrices containing the singular values of $A$. 
Since $\mathrm{rank}(A)=r$, we have
\begin{eqnarray*}
A\Omega=\tilde{U}\Sigma \tilde{V}^{\rmH}\Omega
=\left[
                              \begin{array}{cc}
                                \tilde{U}_r & \tilde{U}_0 \\
                              \end{array}
                            \right]
                            \left[
                               \begin{array}{cc}
                                 \Sigma_r & O \\
                                 O  & O\\
                               \end{array}
                             \right]
                             \left[
                               \begin{array}{c}
                                 \tilde{V}_r^{\rmH} \\
                                 \tilde{V}_0^{\rmH} \\
                               \end{array}
                             \right]
                             \Omega
                             =\tilde{U}_r\Sigma_r\tilde{V}_r^{\rmH}\Omega.
\end{eqnarray*}
By Lemma \ref{lem1}, $\tilde{V}_r^{\rmH}\Omega$ is an $r \times d$ standard Gaussian matrix, $\mathrm{rank}(\tilde{V}_r^{\rmH}\Omega)=\min\{r,d\}$. And $\tilde{U}_r\Sigma_r$ is a full column rank matrix. Therefore, $\mathrm{rank}(A\Omega)=\mathrm{rank}(\tilde{U}_r\Sigma_r\tilde{V}_r^{\rmH}\Omega)=\mathrm{rank}(\tilde{V}_r^{\rmH}\Omega)=\min\{r,d\}$.
\end{proof}

\begin{lemma}[\cite{a1996}]
Let $A \in \mathbb{C}^{m \times n}$ $(m \geq n$) with $\mathrm{rank}(A)=r$. Then there exists a permutation matrix $\Pi$, such that
\[A\Pi = Q [R_{11}, R_{12}],\]
where $Q \in \mathbb{C}^{m \times r}$ is a unitary matrix, and $R_{11} \in \mathbb{C}^{r \times r}$ is a non-singular upper triangular matrix.
\end{lemma}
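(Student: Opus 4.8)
The plan is to reduce the statement to the standard (thin) QR factorization of a full-column-rank matrix. First I would invoke the equality of column rank and rank: since $\mathrm{rank}(A)=r$, the matrix $A$ possesses exactly $r$ linearly independent columns. Fix any such set, and let $\Pi$ be a permutation matrix that gathers these $r$ columns into the leading block, so that $A\Pi=[A_{1},A_{2}]$ with $A_{1}\in\mathbb{C}^{m\times r}$ of full column rank $r$ and $A_{2}\in\mathbb{C}^{m\times(n-r)}$ collecting the remaining columns.

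Next I would apply the economy-size QR factorization to $A_{1}$: write $A_{1}=QR_{11}$ with $Q\in\mathbb{C}^{m\times r}$ satisfying $Q^{\rmH}Q=I_{r}$ and $R_{11}\in\mathbb{C}^{r\times r}$ upper triangular. Because $A_{1}$ has full column rank, none of the diagonal entries of $R_{11}$ can vanish, hence $R_{11}$ is nonsingular; this also shows that $\mathrm{range}(Q)=\mathrm{range}(A_{1})$, a subspace of dimension $r$.

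It remains to handle $A_{2}$. Since $\mathrm{rank}(A\Pi)=\mathrm{rank}(A)=r$ and the $r$ columns of $A_{1}$ are independent, they span the entire column space of $A\Pi$; in particular every column of $A_{2}$ lies in $\mathrm{range}(A_{1})=\mathrm{range}(Q)$. Therefore $A_{2}=QR_{12}$ with $R_{12}:=Q^{\rmH}A_{2}\in\mathbb{C}^{r\times(n-r)}$. Concatenating the two blocks yields $A\Pi=[A_{1},A_{2}]=[QR_{11},QR_{12}]=Q[R_{11},R_{12}]$, which is precisely the claimed factorization.

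The only mildly delicate point is the interplay between pivoting and triangular structure, but it causes no difficulty here: the triangularity of $R_{11}$ is obtained for free from the QR step performed \emph{after} the permutation, and no constraint is placed on $A_{2}$ beyond membership in the already-fixed range of $Q$. Thus I expect the argument to be short; the main care needed is only terminological, namely to note that the rectangular $Q$ is an isometry ($Q^{\rmH}Q=I_{r}$) rather than a square unitary matrix.
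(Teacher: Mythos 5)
Your argument is correct and complete: permuting $r$ linearly independent columns to the front, taking a thin QR of the full-column-rank block $A_{1}=QR_{11}$ (so $R_{11}$ is nonsingular), and then expressing $A_{2}=Q(Q^{\rmH}A_{2})$ because $\mathrm{range}(A_{2})\subset\mathrm{range}(A_{1})=\mathrm{range}(Q)$ is exactly the standard existence proof of the pivoted rank-revealing QR factorization. Note that the paper itself gives no proof of this lemma—it is quoted directly from Bj\"{o}rck's book—so there is no internal argument to compare against; your remark that the rectangular $Q$ is an isometry with orthonormal columns rather than a square unitary matrix is a fair correction of the statement's wording.
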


Next, we show that Algorithm \ref{alg:qr} incorporates a random matrix, which eliminates the need for a permutation matrix, and ensures that the diagonal elements of the upper triangular matrix $R$ beyond the matrix rank are zero.

Let $\Omega_j$, $Y_j$ and $T_j$ for $j=1,2,\ldots,z+1$  be defined or computed as in Algorithm \ref{alg:qr}. Denote $\Omega = [\Omega_1, \Omega_2, \cdots, \Omega_{z+1}]$ and $T = \mathrm{diag}\{T_{1},\cdots,T_{z+1}\}$. 

\begin{theorem}\label{th3.4}
Let $A \in \mathbb{C}^{m\times n}$ $(m \ge n)$ with $\rank(A)=r$ and let $\Omega = [\Omega_1, \Omega_2, \cdots, \Omega_{z+1}]$ be an $n\times d~(d\geq r)$ standard Gaussian matrix.
Suppose that $A\Omega$ has the QR decomposition 
\begin{eqnarray}\label{aoqr}
A\Omega = A[\Omega_{1},\cdots,\Omega_{z+1}] 
  = [Q_{1},\cdots,Q_{z+1}]\left[
                                                   \begin{array}{ccc}
                                                     R_{11} & \cdots & R_{1,z+1} \\
                                                      & \ddots & \vdots \\
                                                      &  & R_{z+1,z+1} \\
                                                   \end{array}
                                                 \right]. 
\end{eqnarray}
Then  the following conclusions hold with probability one.
\begin{enumerate}
  \item   The QR decomposition  $Y_j=P_{j}T_{j}$ satisfies $P_{j}=Q_j$ and $T_j=R_{jj}$, $j=1,2,\ldots,z+1$.
  \item $|T(i,i)|>0, i=1,2,\ldots,r$, and $|T(i,i)|=0, i=r+1,r+2,\ldots,d$.
\end{enumerate}
\end{theorem}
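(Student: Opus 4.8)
The plan is to connect the block-wise construction in Algorithm~\ref{alg:qr} (which builds $Q$ batch by batch via $Y_j = (I_m - QQ^{\rmH})A\Omega_j$) to the single monolithic QR decomposition of $A\Omega$ in \eqref{aoqr}, and then read off the rank-revealing diagonal behavior from Lemma~\ref{lem2} and Lemma~\ref{lem1}. For part (i), I would argue by induction on $j$. The key observation is that in a (thin, unpivoted) QR decomposition the columns are processed left to right, so the leading $f_1 + \cdots + f_{j-1}$ columns of $Q$ in \eqref{aoqr}, namely $[Q_1,\ldots,Q_{j-1}]$, span exactly the same space as the first $f_1 + \cdots + f_{j-1}$ columns of $A\Omega$. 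At the start of iteration $j$ of the algorithm, the accumulated matrix is (inductively) $Q = [Q_1,\ldots,Q_{j-1}]$, hence $QQ^{\rmH}$ is the orthogonal projector onto $\mathrm{range}([A\Omega_1,\ldots,A\Omega_{j-1}])$. Therefore $Y_j = (I_m - QQ^{\rmH})A\Omega_j$ is precisely the residual of $A\Omega_j$ after removing components in the span of the earlier blocks. Comparing with block Gram--Schmidt applied to the partition of $A\Omega$ into blocks $[A\Omega_1 \mid \cdots \mid A\Omega_{z+1}]$, the thin QR factor $P_j T_j$ of $Y_j$ must coincide (up to the usual sign/phase normalization that QR fixes uniquely for nonsingular leading minors) with $Q_j R_{jj}$: $P_j$ furnishes the next block of $Q$-columns and $T_j = R_{jj}$ is the corresponding diagonal block of $R$. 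I will need to note that this identification is valid \emph{with probability one} because, by Lemma~\ref{lem2}, each relevant matrix has the generically maximal rank, so the QR factors are uniquely determined and the induction does not break down on a measure-zero event.

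For part (ii), I would combine the block identity $T = \mathrm{diag}\{T_1,\ldots,T_{z+1}\} = \mathrm{diag}\{R_{11},\ldots,R_{z+1,z+1}\}$ with a rank count. Since QR preserves rank and $[Q_1,\ldots,Q_{z+1}]$ has orthonormal columns, $\mathrm{rank}(A\Omega)$ equals the rank of the block upper-triangular $R$ matrix in \eqref{aoqr}. By Lemma~\ref{lem2} with $d \geq r$, $\mathrm{rank}(A\Omega) = \min\{r,d\} = r$. Now I want to conclude that exactly the first $r$ diagonal entries of $T$ are nonzero and the remaining $d-r$ vanish. The upward direction — that at least $r$ diagonal entries are nonzero — needs care: a general rank-$r$ upper triangular matrix need not have its nonzero diagonal entries in the first $r$ positions. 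Here the extra structure comes from the \emph{nested} construction: the first $k$ columns of $A\Omega$ are $A\Omega_1$ with $\Omega_1$ an $n \times f_1$ Gaussian matrix, so by Lemma~\ref{lem2} $\mathrm{rank}(A\Omega_1) = \min\{r, f_1\} = f_1$ (assuming $f_1 = k \le r$), forcing $T_1 = R_{11}$ nonsingular, i.e. all $f_1$ of its diagonal entries are nonzero. Iterating: as long as $f_1 + \cdots + f_j \le r$, the leading $f_1+\cdots+f_j$ columns of $A\Omega$ have full column rank (again Lemma~\ref{lem2}), so $[R_{11},\ldots,R_{jj}]$ restricted to those columns is square nonsingular upper triangular, hence every diagonal entry of $T_1,\ldots,T_j$ is nonzero. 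At the block where the cumulative column count first exceeds $r$, the leading $r$ columns still have full column rank $r$ while column $r+1$ lies in their span (that block of $A\Omega$ having more than enough columns to saturate rank $r$), so the $(r+1)$-st pivot is zero; and once a zero diagonal entry appears in an upper triangular factor the construction ensures no further column outside the rank-$r$ range can be independent, so all subsequent diagonal entries vanish. Thus $|T(i,i)| > 0$ for $i \le r$ and $|T(i,i)| = 0$ for $r < i \le d$.

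The main obstacle I anticipate is making the ``pivot zero implies all later pivots zero'' step airtight, i.e., ruling out a ``spurious'' nonzero diagonal entry appearing after position $r$. The clean way is to avoid reasoning about an arbitrary upper triangular matrix of rank $r$ and instead use the nested-range argument throughout: for every $1 \le i \le d$, the sign of $|T(i,i)|$ is governed by whether the $i$-th column of $A\Omega$ is linearly independent of the preceding $i-1$ columns, which in the thin-QR construction is exactly $\|(I - Q_{(i-1)}Q_{(i-1)}^{\rmH}) (A\Omega)_{:,i}\|$ where $Q_{(i-1)}$ spans the first $i-1$ columns. Since $\mathrm{range}(A\Omega) = \mathrm{range}(A\tilde V_r^{\rmH}\Omega)$ has dimension $r$ (from \eqref{asvd}--\eqref{aoqr} reasoning as in Lemma~\ref{lem2}), and since by Lemma~\ref{lem1} the Gaussian structure guarantees that the first $\min\{i, r\}$ columns are generically independent, we get: the first $r$ columns of $A\Omega$ are linearly independent with probability one (each new one adds a dimension until the span saturates at $r$), while columns $r+1, \ldots, d$ are each in the span of the preceding ones. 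This gives $|T(i,i)| > 0$ for $i \le r$ and $|T(i,i)| = 0$ for $i > r$ simultaneously and cleanly, sidestepping any block-boundary bookkeeping. I would present part (ii) in this form, using part (i) only to equate $T(i,i)$ with the diagonal entries produced by the algorithm.
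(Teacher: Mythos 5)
Your proposal is correct and follows essentially the same route as the paper: part (i) by induction, using $A\Omega_{j+1}=\sum_i Q_iR_{i,j+1}$ so that the projection $(I-\sum_{i\le j}Q_iQ_i^{\rmH})$ leaves exactly $Q_{j+1}R_{j+1,j+1}$, and part (ii) by the rank count from Lemma \ref{lem2} (leading column blocks of $A\Omega$ have full column rank with probability one, and the rank saturates at $r$). Your column-by-column residual argument for (ii) is a slightly more careful rendering of the paper's block-level rank bookkeeping (it explicitly rules out a nonzero pivot after position $r$), but it rests on the same lemmas and the same idea, so no substantive difference.
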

\begin{proof}
(i) From Lemma \ref{lem2}, we know that $A\Omega_1$, $A\Omega_2$, $\ldots$, $A\Omega_{z+1}$ are of full column rank with probability one.    
For $j=1$, $Y_1=A\Omega_{1} = P_{1}T_{1}$ satisfies $P_{1}=Q_1$, $T_1=R_{11}$.
For $j=2$, we can get $Y_2=A\Omega_{2}-Q_{1}Q_{1}^{\rmH}A\Omega_{2} = P_{2}T_{2}$ according to Algorithm \ref{alg:qr}.
By \eqref{aoqr}, it follows that
$A\Omega_{2}=Q_{1}R_{12}+Q_{2}R_{22}$.
Then we have
\[Q_{1}Q_{1}^{\rmH}A\Omega_{2} = Q_{1}Q_{1}^{\rmH}Q_{1}R_{12} + Q_{1}Q_{1}^{\rmH}Q_{2}R_{22} = Q_{1}R_{12}.\]
That means $Y_{2} = P_{2}T_{2}$ satisfies $P_{2}=Q_{2}$ and $T_{2}=R_{22}$.\\
\indent Assume that the assertion $Y_{j} = P_{j}T_{j}$ satisfies $P_{j}=Q_{j}$, $T_{j}=R_{jj}$ holds for $2\leq j\leq z$. 
Then for $j+1$, we can get
\begin{equation*}
Y_{j+1} = \left(I - \sum_{i=1}^{j}Q_{i}Q_{i}^{\rmH}\right)A\Omega_{j+1} = P_{j+1}T_{j+1}
\end{equation*}
according to Algorithm \ref{alg:qr}.
By \eqref{aoqr}, it follows that
\begin{equation*}
\begin{aligned}
A\Omega_{j+1} = Q_{1}R_{1,j+1}+Q_{2}R_{2,j+1}+\cdots+Q_{j+1}R_{j+1,j+1}.
\end{aligned}
\end{equation*}
Since $Q_{1},Q_{2},\ldots,Q_{j}$ are orthonormal, we know that the projection operation $(I - \sum_{i=1}^{j}Q_{i}Q_{i}^{\rmH})$ will only retain the part of $A\Omega_{j+1}$ that corresponds to $Q_{j+1}$. So we obtain
$Y_{j+1} = Q_{j+1}R_{j+1,j+1}$.
Therefore, $Y_{j+1}=P_{j+1}T_{j+1}$ satisfies $P_{j+1} = Q_{j+1}$ and $T_{j+1}=R_{j+1,j+1}$. We have proved conclusion (i) by mathematical induction. 

(ii) When Algorithm \ref{alg:qr} does not terminate, we can see that as $j$ increases, Lemma \ref{lem2} shows that $A[\Omega_{1},\cdots,\Omega_{j}]$ is always full column rank with probability one, and
\begin{eqnarray*}
\mathrm{rank}(\mathrm{diag}\{T_{1},\cdots,T_{j}\}) = \mathrm{rank}(\mathrm{diag}\{R_{11},\cdots,R_{jj}\}) = \mathrm{rank}\left(A[\Omega_{1},\cdots,\Omega_{j}]\right)
\end{eqnarray*}
holds true.
Since Algorithm \ref{alg:qr} breaks only if there exists $\ell$ at the $(z+1)$th step such that $|T_{z+1}(\ell,\ell)|=0$, there must be $zk+\ell-1=r$.
This implies $|T(i,i)|>0$, $i = 1,2,\ldots,r$, and $|T(i,i)|=0, i=r+1,r+2,\ldots,d$.
\end{proof}

\begin{remark}\label{rem3.6}
  In practical applications, we determine the numerical rank $r$ of a matrix based on the relationship between the diagonal elements of $T$ and a given tolerance $\varepsilon>0$. That is, $|T(r,r)|>\varepsilon$ and $|T(r+1,r+1)|\leq\varepsilon$.
\end{remark}

\begin{remark}\label{rem3.7}
Based on Theorem \ref{th3.4} and Remark \ref{rem3.6}, permutation matrices are not required to ensure that the diagonal elements beyond the matrix rank in the upper triangular matrix are zero.
Let $A \in \mathbb{C}^{m \times n}$ $(m \geq n)$ with $\mathrm{rank}(A)=r$ and let $\Omega$ be an $n\times n$ standard Gaussian matrix. Then
\[A\Omega = Q [R, \tilde{R}],\]
where $Q \in \mathbb{C}^{m \times r}$ is of full column rank, and $R \in \mathbb{C}^{r \times r}$ is a non-singular upper triangular matrix.
\end{remark}

\subsection{Efficient orthogonal decomposition with automatic basis extraction}
In this part, we present an EOD-ABE to compute a low-rank approximation to a matrix with unknown rank. The main steps are outlined in Algorithm \ref{alg:A}.

\begin{algorithm}[H]
    \caption{EOD-ABE via subspace iteration}
    \label{alg:A}
    \renewcommand{\algorithmicrequire}{\textbf{Input:}}
    \renewcommand{\algorithmicensure}{\textbf{Output:}}
    \begin{algorithmic}[1]
        \REQUIRE $A\in\mathbb{C}^{m\times n}$, a proper tolerance $\varepsilon$ and a power iteration $\tau$.  
        \ENSURE Low-rank approximation: $\hat{A}=\hat{U}\hat{D}\hat{V}^{\rmH}$, where $\hat{D}\in\mathbb{C}^{r\times r}$ is an upper triangular matrix, $\hat{U}\in\mathbb{C}^{m\times r}$ and $\hat{V}\in\mathbb{C}^{n\times r}$ have orthonormal columns.  
        \STATE Compute an approximated basis $Q_{0} = BasisExt(A, \varepsilon)$ by Algorithm \ref{alg:qr}.
        \FOR {$j=1,2,\ldots,\tau$}
            \STATE Form $\tilde{Y}_{j}=A^{\rmH}Q_{j-1}$.
            \STATE Compute the QR decomposition $\tilde{Y}_{j}=\tilde{Q}_{j}\tilde{R}_{j}$.
            \STATE Form $Y_{j}=A\tilde{Q}_{j}$.
            \STATE Compute the QR decomposition $Y_{j}=Q_{j}R_{j}$.
        \ENDFOR
        \STATE Form $C=Q_{\tau}^{\rmH}A$.\label{CQA}
        \STATE Compute the QR decomposition $C^{\rmH}=QR$.\label{C1QR}
        \STATE Compute the  QR decomposition $R^{\rmH}=\hat{Q}\hat{R}$.\label{RQR}

        \RETURN $\hat{U}=Q_{\tau}\hat{Q}$, $\hat{D}=\hat{R}$ and $\hat{V}=Q$.\label{AUDV}
    \end{algorithmic}
\end{algorithm}

  A major distinction of Algorithm~\ref{alg:A}, compared with RSVD~\cite{hmt11}, TSR-SVD~\cite{hmt11}, CoR-UTV~\cite{kl18}, RP-TSOD~\cite{kc20}, and PbP-QLP~\cite{kc21}, lies in how the basis for $\mathrm{range}(A)$ is computed.
  The existing methods require a user-specified sampling parameter (e.g., target rank or oversampling size), which is often not known in advance and may require additional cost to estimate. In contrast, our algorithm adaptively constructs the basis based on the actual structure of the input matrix, without any prior knowledge of the matrix rank. This adaptivity enhances both robustness and efficiency in practical applications.
  
When the input matrix $A$ exhibits flat singular values-that is, its singular values decay slowly-or when $A$ is very large, the relative contribution of the trailing singular components to the dominant singular directions can be suppressed by exponentiating the matrix. To achieve this, Algorithm~\ref{alg:A} employs subspace iteration. First, a small integer $q > 0$ is fixed, which controls how many steps of power iteration will be taken. We apply the randomized sampling scheme to the matrix $Y = (A A^{\rmH})^q A$. The matrix $Y$ shares the same singular vectors as the input matrix $A$, but its singular values decay much more rapidly \cite{hmt11}: $$\sigma_i(Y) = \sigma_i(A)^{2q+1},$$ for $i = 1, 2, 3, \ldots$.
  The choice of the power iteration parameter $q$ governs the balance between accuracy and efficiency. In most cases, $q = 0, 1$, or $2$ is sufficient. When $q = 0$, the algorithm skips power iterations altogether and achieves the highest computational efficiency, though at the cost of reduced accuracy. This setting is suitable for scenarios where speed is prioritized and approximate results are acceptable. When $q = 1$, one additional multiplication by $A A^{\rmH}$ is introduced. The runtime increases slightly, but the accuracy improves significantly, making it a good compromise for applications that demand both speed and higher accuracy. For larger values of $q$, the approximation continues to be refined, but the computational cost grows, while the marginal gain in accuracy becomes negligible. This trade-off is theoretically justified by our error bound in Theorem~\ref{siam} and empirically supported by the experiments in Section~4.5, which recommend using $q = 0$ or 1 in most practical settings. Since the approximation rank of $A$ is determined in the first step of Algorithm~\ref{alg:A} (equal to the number of columns of $Q_0$), the subsequent steps~2-7 only require QR decompositions on the iterated subspace. There is therefore no need to invoke Algorithm~\ref{alg:qr}, further enhancing the overall efficiency of our approach.

Now, we show that the low-rank approximation computed by Algorithm \ref{alg:A} reveals the rank of $A$. 
\begin{theorem}
Let $A\in\mathbb{C}^{m\times n}$ $(m\geq n)$ with $\rank(A)=r$. Then Algorithm \ref{alg:A} with a proper tolerance $\varepsilon>0$ computes a rank-revealing approximation to $A$:
\[\hat{A} = \hat{U}\hat{D}\hat{V}^{\rmH},\]
where $\hat{U}=Q_{\tau}\hat{Q}\in\mathbb{C}^{m\times r}$ and $\hat{V}=Q\in\mathbb{C}^{n\times r}$ are column orthogonal, and $\hat{D}=\hat{R}\in\mathbb{C}^{r\times r}$ is an upper triangular matrix of rank $r$.

\end{theorem}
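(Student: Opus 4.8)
The plan is to track the rank through each step of Algorithm~\ref{alg:A} and show that (a) every matrix factor produced has exactly $r$ columns, and (b) the final triangular factor $\hat D$ is nonsingular, so that $\hat A$ has rank exactly $r$ and equals the optimal rank-$r$ approximation $\hat A$ defined in~\eqref{udvd2}. First I would invoke Theorem~\ref{th3.4} (and Remark~\ref{rem3.7}): since $\mathrm{rank}(A)=r$, the basis extraction of Algorithm~\ref{alg:qr} terminates with $Q_0\in\mathbb{C}^{m\times r}$ having orthonormal columns, and moreover $Q_0Q_0^{\rmH}A=A$, i.e.\ $\mathrm{range}(Q_0)=\mathrm{range}(A)$ with probability one. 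This is the crucial starting point because it fixes the working dimension at the true rank $r$ and guarantees no information is lost.

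Next I would propagate this through the subspace iteration loop (steps~2--7). At each pass, $\tilde Y_j=A^{\rmH}Q_{j-1}$ has column space contained in $\mathrm{range}(A^{\rmH})$, which has dimension $r$; since $Q_{j-1}$ already spans $\mathrm{range}(A)$, one shows $\mathrm{range}(\tilde Y_j)=\mathrm{range}(A^{\rmH})$, so the QR factor $\tilde Q_j\in\mathbb{C}^{n\times r}$ is column-orthogonal with $\tilde Q_j\tilde Q_j^{\rmH}$ acting as the orthogonal projector onto $\mathrm{range}(A^{\rmH})$. Then $Y_j=A\tilde Q_j$ satisfies $\mathrm{range}(Y_j)=A\cdot\mathrm{range}(A^{\rmH})=\mathrm{range}(A)$, so $Q_j\in\mathbb{C}^{m\times r}$ and $\mathrm{range}(Q_j)=\mathrm{range}(A)$. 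By induction, $\mathrm{range}(Q_\tau)=\mathrm{range}(A)$, hence $Q_\tau Q_\tau^{\rmH}A=A$. The key algebraic identity here is that projecting onto $\mathrm{range}(A)$ and onto $\mathrm{range}(A^{\rmH})$ alternately is idempotent on these spaces once they are captured exactly, so the iteration does not shrink the dimension; the full-column-rank statements follow from Lemma~\ref{lem2} applied to the relevant products with probability one.

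Then I would analyze steps~8--10. Since $C=Q_\tau^{\rmH}A$ and $Q_\tau$ spans $\mathrm{range}(A)$ with $r$ orthonormal columns, $C\in\mathbb{C}^{r\times n}$ has rank $r$, so $C^{\rmH}\in\mathbb{C}^{n\times r}$ has full column rank; its QR decomposition $C^{\rmH}=QR$ yields $Q\in\mathbb{C}^{n\times r}$ column-orthogonal and $R\in\mathbb{C}^{r\times r}$ nonsingular upper triangular (a square full-rank triangular factor). Likewise $R^{\rmH}\in\mathbb{C}^{r\times r}$ is nonsingular, so $R^{\rmH}=\hat Q\hat R$ gives $\hat Q\in\mathbb{C}^{r\times r}$ unitary and $\hat R\in\mathbb{C}^{r\times r}$ nonsingular upper triangular. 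Setting $\hat U=Q_\tau\hat Q$ (orthonormal columns, $m\times r$), $\hat V=Q$ (orthonormal columns, $n\times r$), $\hat D=\hat R$ (upper triangular, rank $r$), a short computation gives $\hat U\hat D\hat V^{\rmH}=Q_\tau\hat Q\hat R Q^{\rmH}=Q_\tau R^{\rmH}Q^{\rmH}=Q_\tau(QR)^{\rmH}=Q_\tau(C^{\rmH})^{\rmH}=Q_\tau C=Q_\tau Q_\tau^{\rmH}A=A$. Thus $\hat A=A$ exactly when $\mathrm{rank}(A)=r$, and in particular $\hat A$ has rank exactly $r$ and trivially attains the minimum in~\eqref{udvd2}; the decomposition $\hat A=\hat U\hat D\hat V^{\rmH}$ is of the claimed rank-revealing form.

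\textbf{Main obstacle.} The delicate point is justifying, rigorously and with probability one, that the subspace iteration captures $\mathrm{range}(A)$ exactly rather than merely approximately --- in the exact-arithmetic, exact-rank setting this rests on the full-column-rank conclusions of Lemma~\ref{lem2} and on the fact that $\mathrm{range}(Q_0)=\mathrm{range}(A)$ already from step~1, so that each subsequent QR factor inherits the correct column space without any loss; one must be careful that the alternating projections $Q_{j-1}Q_{j-1}^{\rmH}$ and $\tilde Q_j\tilde Q_j^{\rmH}$ are genuine orthogonal projectors onto $\mathrm{range}(A)$ and $\mathrm{range}(A^{\rmH})$ and that multiplication by $A$ or $A^{\rmH}$ maps one range onto the other bijectively (since $A$ restricted to $\mathrm{range}(A^{\rmH})$ is an isomorphism onto $\mathrm{range}(A)$). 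Handling the tolerance $\varepsilon$ correctly --- i.e.\ ``proper $\varepsilon$'' meaning $\varepsilon$ smaller than the smallest nonzero singular value of $A$, as in Remark~\ref{rem3.6} --- is what makes the termination of Algorithm~\ref{alg:qr} occur precisely at index $r$; I would state this assumption explicitly at the outset.
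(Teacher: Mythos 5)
Your proposal is correct and follows essentially the same route as the paper: invoke Theorem~\ref{th3.4} (with a proper $\varepsilon$) so that step~1 delivers $Q_0$ with exactly $r$ orthonormal columns spanning $\mathrm{range}(A)$, carry this through the subspace iteration, and then obtain the factorization from the identity $\hat U\hat D\hat V^{\rmH}=Q_{\tau}\hat Q\hat R Q^{\rmH}=Q_{\tau}R^{\rmH}Q^{\rmH}=Q_{\tau}Q_{\tau}^{\rmH}A$ via the two QR decompositions of $C^{\rmH}$ and $R^{\rmH}$. Your version is in fact slightly more explicit than the paper's (tracking $\mathrm{range}(Q_j)=\mathrm{range}(A)$ at every pass and concluding $\hat A=A$ exactly in the exact rank-$r$ case, where the paper instead writes the power-iteration chain $(AA^{\rmH})^{\tau}A\Omega=Q_{\tau}R_{\tau}\tilde R_{\tau}\cdots R_{1}\tilde R_{1}R_{0}$), but the underlying argument is the same.
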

\begin{proof}
From Theorem \ref{th3.4} and Remark \ref{rem3.6}, we know that
one can get an approximated basis $Q_{0}\in\mathbb{C}^{n\times r}$ of the left singular vectors of $A$ from Algorithm \ref{alg:qr}. Then, 
\begin{equation*}
\begin{aligned}
(AA^{\rmH})^{\tau}A\Omega &= (AA^{\rmH})^{\tau}Q_{0}R_{0} 
= (AA^{\rmH})^{\tau-1}AA^{\rmH}Q_{0}R_{0} 
= (AA^{\rmH})^{\tau-1}A\tilde{Q}_{1}\tilde{R}_{1}R_{0} \\
&= (AA^{\rmH})^{\tau-1}Q_{1}R_{1}\tilde{R}_{1}R_{0} 
= (AA^{\rmH})^{\tau-2}AA^{\rmH}Q_{1}R_{1}\tilde{R}_{1}R_{0} \\
&= (AA^{\rmH})^{\tau-2}A\tilde{Q}_{2}\tilde{R}_{2}R_{1}\tilde{R}_{1}R_{0} \\
&= (AA^{\rmH})^{\tau-2}Q_{2}R_{2}\tilde{R}_{2}R_{1}\tilde{R}_{1}R_{0} \\
&= \cdots \\
&= Q_{\tau}R_{\tau}\tilde{R}_{\tau}\cdots R_{1}\tilde{R}_{1}R_{0}.
\end{aligned}
\end{equation*}
Construct a matrix
\begin{equation*}
C=Q_{\tau}^{\rmH}A,
\end{equation*}
which can be seen as linear combinations of $A$'s rows by means of $Q_{q}$. Let the QR decomposition of $C^{\rmH}$ be
\begin{equation}\label{CQR}
C^{\rmH}=QR,
\end{equation}
where $Q \in \mathbb{C}^{n\times r}$ is column-orthonormal, $R\in \mathbb{C}^{r\times r}$ is a full rank upper triangular matrix. We call the diagonals of $R$ by R-values. And let the QR decomposition of $R^{\rmH}$ be
\begin{equation*}
R^{\rmH}=\hat{Q}\hat{R},
\end{equation*}
where $\hat{Q} \in \mathbb{C}^{r\times r}$ is an orthonormal matrix, $\hat{R}\in \mathbb{C}^{r\times r}$ is a full rank upper triangular matrix.
Then, we obtain a rank-revealing approximation to $A$:
\begin{eqnarray*}
\hat{A}=Q_{\tau}(Q_{\tau}^{\rmH}A)
=Q_{\tau}R^{\rmH}Q^{\rmH} 
=Q_{\tau}\hat{Q}\hat{R}Q^{\rmH} := \hat{U}\hat{D}\hat{V}^{\rmH},
\end{eqnarray*}
where $\hat{U}=Q_{\tau}\hat{Q}$, $\hat{D}=\hat{R}$ and $\hat{V}=Q$.
\end{proof}

\subsection{Error analysis of the low-rank appoximation}
In this part, we  develop bounds for the errors of the low-rank approximation.  

Several lemmas are recalled from \cite{hmt11}. 
Recall that an orthogonal projector is a nonzero Hermitian matrix $P$ that satisfies the polynomial $P^{2}=P$.
\begin{lemma}[\cite{hmt11}]\label{lem:pm}
For a given matrix $A$,  let $P_{A}$ denote the unique orthogonal
projector with $\mathrm{range}(P_{A})=\mathrm{range}(A)$. When $A$ has
full column rank,  this projector is explicitly expressed as
\begin{eqnarray*}
P_{A} = A(A^{\rmH} A)^{-1}A^{\rmH}.
\end{eqnarray*}
For a unitary matrix $Q$, there is
\begin{eqnarray*}
Q^{\rmH}P_{A}Q = P_{Q^{\rmH}A}.
\end{eqnarray*}
\end{lemma}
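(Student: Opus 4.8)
\textbf{Proof proposal for Lemma \ref{lem:pm}.} The statement splits into two independent claims. For the first, I would start from the definition of an orthogonal projector as a Hermitian idempotent whose range is $\mathrm{range}(A)$, and verify directly that the candidate $P_A = A(A^{\rmH}A)^{-1}A^{\rmH}$ has these three properties when $A$ has full column rank. Well-definedness comes first: full column rank makes $A^{\rmH}A$ invertible (it is Hermitian positive definite, since $x^{\rmH}A^{\rmH}Ax = \|Ax\|_2^2 > 0$ for $x \neq 0$), so the expression makes sense. Hermiticity is immediate from $(A(A^{\rmH}A)^{-1}A^{\rmH})^{\rmH} = A((A^{\rmH}A)^{-1})^{\rmH}A^{\rmH}$ together with the fact that the inverse of a Hermitian matrix is Hermitian. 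Idempotency is the short computation $P_A^2 = A(A^{\rmH}A)^{-1}(A^{\rmH}A)(A^{\rmH}A)^{-1}A^{\rmH} = A(A^{\rmH}A)^{-1}A^{\rmH} = P_A$. Finally, $\mathrm{range}(P_A) \subseteq \mathrm{range}(A)$ is obvious from the leading factor $A$, while the reverse inclusion follows because $P_A A = A(A^{\rmH}A)^{-1}(A^{\rmH}A) = A$, so every column of $A$ lies in $\mathrm{range}(P_A)$. Uniqueness of the orthogonal projector onto a given subspace is standard linear algebra and can be cited or dispatched in a line: if $P$ and $P'$ are both Hermitian idempotents with the same range, then $P'P = P$ and $PP' = P'$ (since the range of one is fixed by the other), and taking conjugate transposes gives $P = P'$.

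For the second claim, $Q^{\rmH}P_A Q = P_{Q^{\rmH}A}$ for unitary $Q$, I would substitute the explicit formula and simplify, using $Q^{\rmH}Q = QQ^{\rmH} = I$. Writing $B = Q^{\rmH}A$, we have $B^{\rmH}B = A^{\rmH}QQ^{\rmH}A = A^{\rmH}A$, which is invertible precisely because $A$ has full column rank (and $B$ then also has full column rank). Hence
\begin{equation*}
P_{Q^{\rmH}A} = (Q^{\rmH}A)\big((Q^{\rmH}A)^{\rmH}(Q^{\rmH}A)\big)^{-1}(Q^{\rmH}A)^{\rmH} = Q^{\rmH}A(A^{\rmH}A)^{-1}A^{\rmH}Q = Q^{\rmH}P_A Q,
\end{equation*}
which is exactly the asserted identity. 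Alternatively, one can argue invariantly: $Q^{\rmH}P_A Q$ is Hermitian and idempotent (conjugation by a unitary preserves both), and its range is $Q^{\rmH}\,\mathrm{range}(P_A) = Q^{\rmH}\,\mathrm{range}(A) = \mathrm{range}(Q^{\rmH}A)$, so by the uniqueness established above it must equal $P_{Q^{\rmH}A}$. I would likely present the invariant version as the main line since it also covers the rank-deficient case where the closed-form expression is unavailable, and mention the algebraic verification as a cross-check.

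There is essentially no hard step here; this is a foundational lemma whose proof is routine. If anything, the only point requiring a word of care is the uniqueness of the orthogonal projector onto a subspace, since the closed-form argument for the second identity implicitly relies on it (or on the direct algebraic computation as a substitute). Since the paper cites \cite{hmt11} for this lemma, it would be entirely reasonable to state it with a one- or two-sentence proof sketch or simply a reference, and I would keep the write-up correspondingly brief.
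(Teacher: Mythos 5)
Your proof is correct; the paper itself gives no proof of this lemma but simply cites \cite{hmt11}, and your verification (Hermitian idempotent with the right range, uniqueness, then either direct substitution or the invariance argument for the conjugation identity) is exactly the standard argument underlying that reference. Your remark that the invariant version also covers the rank-deficient case is a sensible refinement, and keeping the write-up to a brief sketch or citation, as you suggest, is appropriate here.
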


\begin{lemma}[\cite{hmt11}]\label{lem:p}
Suppose $\mathrm{range}(N) \subset \mathrm{range}(M)$. Then, for each matrix $A$, it holds that $\|P_{N}A\| \leq \|P_{M}A\|$ and that $\|(I - P_{M})A\| \leq \|(I - P_{N})A\|$.
\end{lemma}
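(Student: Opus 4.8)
The two inequalities in Lemma~\ref{lem:p} are really two faces of the same fact, so the plan is to prove the first one carefully and then deduce the second by taking orthogonal complements. First I would recall that for any matrix $M$ of full column rank, $P_M = M(M^{\rmH}M)^{-1}M^{\rmH}$ is the orthogonal projector onto $\mathrm{range}(M)$, and that in general (without the rank assumption) $P_M$ is still the unique Hermitian idempotent with range equal to $\mathrm{range}(M)$; this is Lemma~\ref{lem:pm}. The hypothesis $\mathrm{range}(N)\subset\mathrm{range}(M)$ then translates into the operator identity $P_M P_N = P_N$ (projecting onto the smaller subspace and then onto the larger one changes nothing), and by taking conjugate transposes, since both projectors are Hermitian, also $P_N P_M = P_N$.

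For the first inequality, fix an arbitrary matrix $A$ and write $P_N A = P_N (P_M A)$, using $P_N P_M = P_N$. Since $P_N$ is an orthogonal projector, $\|P_N\|_2 \le 1$ in the spectral norm, and more generally multiplication by an orthogonal projector does not increase any unitarily invariant norm: indeed $P_N = W W^{\rmH}$ for a matrix $W$ with orthonormal columns (an orthonormal basis of $\mathrm{range}(N)$), so $\|P_N B\| = \|W W^{\rmH} B\| = \|W^{\rmH} B\| \le \|B\|$ for every unitarily invariant norm, where the last inequality follows because $W^{\rmH}B$ is obtained from $\begin{bmatrix} W^{\rmH} \\ W_\perp^{\rmH}\end{bmatrix} B$, a unitary times $B$, by deleting rows, and deleting rows cannot increase a unitarily invariant norm (this is a standard interlacing/pinching fact, or can be seen via the singular value characterization). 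Applying this with $B = P_M A$ gives $\|P_N A\| = \|P_N (P_M A)\| \le \|P_M A\|$, which is the first claim.

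For the second inequality I would pass to complementary projectors. Set $P_M^{\perp} = I - P_M$ and $P_N^{\perp} = I - P_N$; these are the orthogonal projectors onto $\mathrm{range}(M)^{\perp}$ and $\mathrm{range}(N)^{\perp}$ respectively. The inclusion $\mathrm{range}(N)\subset\mathrm{range}(M)$ reverses under complementation to $\mathrm{range}(M)^{\perp}\subset\mathrm{range}(N)^{\perp}$, equivalently $\mathrm{range}(P_M^{\perp})\subset\mathrm{range}(P_N^{\perp})$. Now apply the first inequality, already proved, with $N$ replaced by $P_M^{\perp}$ and $M$ replaced by $P_N^{\perp}$: for every matrix $A$,
\begin{equation*}
\|P_M^{\perp} A\| = \|P_{P_M^{\perp}} A\| \le \|P_{P_N^{\perp}} A\| = \|P_N^{\perp} A\|,
\end{equation*}
that is, $\|(I - P_M)A\| \le \|(I - P_N)A\|$, as required.

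The only genuinely delicate point is the claim that left-multiplication by an orthogonal projector cannot increase a unitarily invariant norm, i.e. $\|P_N B\|\le\|B\|$; everything else is bookkeeping with projector identities. I expect to handle this either via the $W^{\rmH}$ row-deletion argument sketched above together with the fact that the singular values of a submatrix interlace those of the full matrix (so passing to a submatrix cannot increase the largest singular value, nor any partial sum of singular values, hence cannot increase any symmetric gauge function of the singular values), or by simply invoking that $P_N = P_N^{\rmH}P_N$ is a contraction and citing the submultiplicativity property $\|XY\|\le\|X\|_2\,\|Y\|$ of unitarily invariant norms with $\|P_N\|_2 = 1$. Either route is routine, so I would state it as a one-line justification rather than expanding it in full.
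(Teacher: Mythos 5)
The paper does not prove this lemma at all---it is quoted directly from \cite{hmt11} (Proposition~8.5 there), so there is no internal proof to compare against. Your argument is correct and complete, and it is essentially the standard proof from that reference: the inclusion $\mathrm{range}(N)\subset\mathrm{range}(M)$ gives $P_{N}P_{M}=P_{N}$, the contractivity of an orthogonal projector in any unitarily invariant norm ($\|P_{N}B\|\leq\|P_{N}\|_{2}\|B\|\leq\|B\|$) yields $\|P_{N}A\|=\|P_{N}P_{M}A\|\leq\|P_{M}A\|$, and applying this to the complementary projectors, for which the inclusion reverses to $\mathrm{range}(I-P_{M})\subset\mathrm{range}(I-P_{N})$, gives the second inequality.
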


\begin{lemma}[\cite{hmt11}]\label{lem:sgt}
Fix matrices $S$, $T$, and draw a standard Gaussian matrix $\Omega$. Then
$\left(\mathbb{E}\|S\Omega T\|_{\rmF}^{2}\right)^{1/2} = \|S\|_{\rmF}\|T\|_{\rmF}$.
\end{lemma}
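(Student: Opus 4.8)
The plan is to reduce the identity to a one-line second-moment computation on the independent entries of $\Omega$. Write $S\in\mathbb{C}^{p\times m}$, $\Omega\in\mathbb{C}^{m\times n}$ a standard Gaussian matrix, and $T\in\mathbb{C}^{n\times q}$, so that the product $S\Omega T\in\mathbb{C}^{p\times q}$ makes sense and
\[
\|S\Omega T\|_{\rmF}^{2}=\sum_{i=1}^{p}\sum_{j=1}^{q}\Bigl|\sum_{k=1}^{m}\sum_{l=1}^{n}S_{ik}\,\Omega_{kl}\,T_{lj}\Bigr|^{2}.
\]
Taking expectations and expanding the modulus square produces a quadruple sum over $(k,l,k',l')$ whose only random ingredient is $\mathbb{E}\bigl[\Omega_{kl}\overline{\Omega_{k'l'}}\bigr]$.

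Next I would record the moment identity: since the entries of $\Omega$ are independent with mean zero and unit variance, $\mathbb{E}\bigl[\Omega_{kl}\overline{\Omega_{k'l'}}\bigr]=\delta_{kk'}\delta_{ll'}$. Substituting this collapses the quadruple sum to a product of separated sums,
\[
\mathbb{E}\|S\Omega T\|_{\rmF}^{2}=\sum_{i,j}\sum_{k,l}|S_{ik}|^{2}\,|T_{lj}|^{2}=\Bigl(\sum_{i,k}|S_{ik}|^{2}\Bigr)\Bigl(\sum_{l,j}|T_{lj}|^{2}\Bigr)=\|S\|_{\rmF}^{2}\,\|T\|_{\rmF}^{2},
\]
and taking square roots gives the claim. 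An equivalent, more compact route is to vectorize: $\mathrm{vec}(S\Omega T)=(T^{\rmT}\otimes S)\,\mathrm{vec}(\Omega)$, whence $\mathbb{E}\|S\Omega T\|_{\rmF}^{2}=\mathbb{E}\,\mathrm{vec}(\Omega)^{\rmH}\bigl(\overline{T}\,T^{\rmT}\otimes S^{\rmH}S\bigr)\mathrm{vec}(\Omega)=\mathrm{tr}\bigl(\overline{T}\,T^{\rmT}\otimes S^{\rmH}S\bigr)$, using that $\mathrm{vec}(\Omega)$ has identity covariance; since the trace of a Kronecker product factors, this equals $\mathrm{tr}(\overline{T}\,T^{\rmT})\,\mathrm{tr}(S^{\rmH}S)=\|T\|_{\rmF}^{2}\|S\|_{\rmF}^{2}$.

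The computation is routine, so there is no genuine obstacle; the only points deserving care are the moment identity $\mathbb{E}\bigl[\Omega_{kl}\overline{\Omega_{k'l'}}\bigr]=\delta_{kk'}\delta_{ll'}$ (verifying that all cross terms vanish) and the index bookkeeping that shows the surviving terms factor exactly as a sum over the rows and columns of $S$ times one over those of $T$. One should also fix the convention for ``standard Gaussian'' consistently (real entries of unit variance, or complex entries with the matching normalization) so the variance accounting is correct; either convention yields the stated equality without modification. Since this lemma is quoted from \cite{hmt11}, one may alternatively just cite it, but the short argument above is self-contained.
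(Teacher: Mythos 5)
Your proof is correct. Note that the paper itself does not prove this lemma at all---it is recalled verbatim from \cite{hmt11} and used as a black box---so there is no in-paper argument to compare against; your entrywise second-moment computation (equivalently, the vectorization identity $\mathrm{vec}(S\Omega T)=(T^{\rmT}\otimes S)\mathrm{vec}(\Omega)$ together with the identity covariance of $\mathrm{vec}(\Omega)$) is exactly the standard derivation behind the cited result, and both routes you sketch are sound, including the remark that the real and complex Gaussian conventions give the same second moment. Either version would serve as a self-contained substitute for the citation.
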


\begin{lemma}[\cite{hmt11}]\label{lem:ginv}
Draw an $m \times n$ standard Gaussian matrix $\Omega$ with $n-m \geq 2$. Then
\begin{equation*}
  \mathbb{E}\|\Omega^{\dagger}\|_{\rmF}^{2} = \frac{m}{n-m-1}.
\end{equation*}
\end{lemma}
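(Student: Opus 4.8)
The plan is to reduce the statement to a standard computation for an inverse Wishart matrix. Since $n-m\geq 2$ gives in particular $m\leq n$, Lemma~\ref{lem1}(i) guarantees that $\Omega$ has full row rank with probability one, so $\Omega\Omega^{\rmH}$ is invertible almost surely and the Moore--Penrose pseudoinverse is the right inverse $\Omega^{\dagger}=\Omega^{\rmH}(\Omega\Omega^{\rmH})^{-1}$. Then $(\Omega^{\dagger})^{\rmH}=(\Omega\Omega^{\rmH})^{-1}\Omega$, and therefore
\begin{equation*}
\|\Omega^{\dagger}\|_{\rmF}^{2}=\mathrm{tr}\big((\Omega^{\dagger})^{\rmH}\Omega^{\dagger}\big)=\mathrm{tr}\big((\Omega\Omega^{\rmH})^{-1}\Omega\Omega^{\rmH}(\Omega\Omega^{\rmH})^{-1}\big)=\mathrm{tr}\big((\Omega\Omega^{\rmH})^{-1}\big).
\end{equation*}
Hence everything reduces to evaluating $\mathbb{E}\,\mathrm{tr}(W^{-1})$, where $W:=\Omega\Omega^{\rmH}$ is the $m\times m$ Wishart matrix built from the $n$ i.i.d.\ standard Gaussian columns of $\Omega$.

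Next I would compute $\mathbb{E}\,\mathrm{tr}(W^{-1})=\sum_{i=1}^{m}\mathbb{E}\big[(W^{-1})_{ii}\big]$. Because the distribution of $\Omega$ is invariant under left multiplication by any permutation matrix (indeed by any orthogonal $Q$, which gives $QWQ^{\rmH}\overset{d}{=}W$ and hence $\mathbb{E}[W^{-1}]$ commutes with every orthogonal matrix, i.e.\ is a scalar multiple of $I_m$), all the diagonal expectations coincide, so it suffices to evaluate $\mathbb{E}[(W^{-1})_{11}]$. By the Schur-complement identity, $1/(W^{-1})_{11}$ equals the Schur complement of the trailing $(m-1)\times(m-1)$ block of $W$, namely $\omega^{(1)}(I_n-P)(\omega^{(1)})^{\rmH}$, where $\omega^{(1)}$ is the first row of $\Omega$ and $P$ is the orthogonal projector onto the row space of the remaining $m-1$ rows (which is $(m-1)$-dimensional a.s.\ by Lemma~\ref{lem1}(i)). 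Conditioning on those $m-1$ rows, $\omega^{(1)}$ is an independent standard Gaussian, and its image under the rank-$(n-m+1)$ projector $I_n-P$ has squared norm distributed as $\chi^{2}_{n-m+1}$, independently of the conditioning; so unconditionally $1/(W^{-1})_{11}\sim\chi^{2}_{n-m+1}$. Consequently $\mathbb{E}[(W^{-1})_{11}]=\mathbb{E}\big[1/\chi^{2}_{n-m+1}\big]=\frac{1}{n-m-1}$, using the elementary identity $\mathbb{E}[1/\chi^{2}_{k}]=1/(k-2)$ for $k>2$ (which follows from $\int_{0}^{\infty}x^{k/2-2}e^{-x/2}\,dx=2^{k/2-1}\Gamma(k/2-1)$), applicable here precisely because $n-m\geq 2$ forces $k=n-m+1>2$. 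Summing the $m$ equal terms gives $\mathbb{E}\,\mathrm{tr}(W^{-1})=\frac{m}{n-m-1}$, which is the assertion.

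The only genuine obstacle lies in the middle step: identifying the Schur complement of the Wishart block as a $\chi^{2}_{n-m+1}$ variable and, crucially, verifying that this quantity has a finite reciprocal mean — which is exactly where the hypothesis $n-m\geq 2$ enters. One may either invoke standard inverse-Wishart theory for $\mathbb{E}[W^{-1}]=\frac{1}{n-m-1}I_m$, or keep the argument self-contained via the conditioning-and-projection computation above. (I note that $\frac{m}{n-m-1}$ is the real-Gaussian value, consistent with \cite{hmt11}; for a complex standard Gaussian the same conditioning argument instead yields $\frac{m}{n-m}$, since the relevant squared norm is then a Gamma random variable with shape $n-m+1$, whose reciprocal has mean $\frac{1}{n-m}$.)
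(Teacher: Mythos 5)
Your proposal is correct; the paper itself states this lemma without proof, quoting it from \cite{hmt11}, and your argument (reducing $\|\Omega^{\dagger}\|_{\rmF}^{2}$ to $\mathrm{tr}\bigl((\Omega\Omega^{\rmH})^{-1}\bigr)$ and evaluating the inverse-Wishart diagonal entries via the Schur-complement/$\chi^{2}_{n-m+1}$ conditioning argument) is essentially the same standard route used in that reference. Your side remark that the constant $\frac{m}{n-m-1}$ is the real-Gaussian value, with $\frac{m}{n-m}$ in the complex case, is a fair and accurate observation about how the lemma is being applied in this paper's complex setting.
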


\begin{lemma}[\cite{hmt11}]\label{lem:8.2}
Suppose that $M\succeq O$. Then 
$I-(I+M)^{-1}\preceq M$.
\end{lemma}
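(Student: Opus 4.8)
The statement to prove is the final Lemma: if $M \succeq O$, then $I - (I+M)^{-1} \preceq M$. The plan is to reduce everything to a scalar inequality via simultaneous diagonalization, since $M$ is Hermitian positive semi-definite. First I would write the spectral decomposition $M = W \Lambda W^{\rmH}$, where $W$ is unitary and $\Lambda = \mathrm{diag}(\lambda_1,\ldots,\lambda_k)$ with each $\lambda_i \geq 0$. Because $I$ commutes with everything, $I + M = W(I+\Lambda)W^{\rmH}$ and hence $(I+M)^{-1} = W(I+\Lambda)^{-1}W^{\rmH}$; note $I+\Lambda$ is invertible precisely because every $\lambda_i \geq 0 > -1$. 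Then
\begin{equation*}
M - \bigl(I - (I+M)^{-1}\bigr) = W\Bigl(\Lambda - I + (I+\Lambda)^{-1}\Bigr)W^{\rmH},
\end{equation*}
so the claim $I - (I+M)^{-1} \preceq M$ is equivalent to $\Lambda - I + (I+\Lambda)^{-1} \succeq O$, which since this matrix is diagonal is just the family of scalar inequalities $\lambda_i - 1 + \frac{1}{1+\lambda_i} \geq 0$ for each $i$.

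Next I would verify the scalar inequality: for $\lambda \geq 0$, multiplying through by the positive quantity $1 + \lambda$ gives $(\lambda - 1)(1+\lambda) + 1 = \lambda^2 - 1 + 1 = \lambda^2 \geq 0$, which holds trivially (and in fact for all real $\lambda$). Thus each diagonal entry of $\Lambda - I + (I+\Lambda)^{-1}$ equals $\frac{\lambda_i^2}{1+\lambda_i} \geq 0$, so that diagonal matrix is positive semi-definite, and conjugating by the unitary $W$ preserves positive semi-definiteness. This yields $M - \bigl(I - (I+M)^{-1}\bigr) \succeq O$, i.e.\ $I - (I+M)^{-1} \preceq M$, as desired.

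There is really no serious obstacle here; the only points requiring any care are (a) justifying that $(I+M)^{-1}$ exists, which follows because $M \succeq O$ forces the spectrum of $I+M$ to lie in $[1,\infty)$, and (b) making sure the algebraic manipulation $M - (I - (I+M)^{-1}) = (I+M)^{-1} + M - I$ is correctly simplified before diagonalizing — one could equally well avoid diagonalization and argue directly that $(I+M)^{-1} + M - I = (I+M)^{-1}\bigl((I+M) + (M-I)(I+M)\bigr)/\,$... but the cleaner route is the spectral one. Alternatively, a slick one-line functional-calculus argument works: the function $f(t) = t - 1 + (1+t)^{-1} = t^2/(1+t)$ is nonnegative on $[0,\infty)$, hence $f(M) \succeq O$ by the spectral theorem, and $f(M) = M - (I - (I+M)^{-1})$. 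I would present whichever version is shortest, most likely the functional-calculus phrasing with the scalar computation $t - 1 + (1+t)^{-1} = t^2/(1+t) \geq 0$ made explicit.
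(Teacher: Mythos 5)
Your argument is correct. Note that the paper itself supplies no proof of this lemma: it is imported verbatim from \cite{hmt11} (where it appears as a proposition with essentially the same justification you give), and your spectral reduction to the scalar inequality $\lambda-1+(1+\lambda)^{-1}=\lambda^{2}/(1+\lambda)\ge 0$ for $\lambda\ge 0$ is exactly the standard argument, so there is nothing to reconcile; the only point worth tidying is to drop the half-finished non-spectral alternative in your last paragraph and present just the functional-calculus version.
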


Now we present the error bounds  of the low-rank approximation.
\begin{theorem}\label{siam}
Let $A$ be an $m\times n$ $(m\geq n)$ matrix with singular values $\sigma_{1}\geq\sigma_{2}\geq\cdots\geq\sigma_{n}$, $\mathrm{rank}(A)=r$ and have the SVD of the form \eqref{asvd}, and let $\Omega$ be an $n\times d$ standard Gaussian matrix.
Let $\hat{A}$ be the low-rank approximation to $A$ computed through Algorithm \ref{alg:A}. If $d-r\geq2$, then 
\begin{equation*}
  \|A-\hat{A}\|_{\rmF}^{2} \leq \alpha^{4\tau}\|\Sigma_{0}\Lambda_{2}\Lambda_{1}^{\dagger}\|_{\rmF}^{2}+\|\Sigma_{0}\|_{\rmF}^{2}
\end{equation*}
and
\begin{equation}\label{bound}
    \mathbb{E}\|A-\hat{A}\|_{\rmF} \leq \left(1+\frac{r\alpha^{4\tau}}{d-r-1}\right)^{\frac{1}{2}}\left(\sum_{j>r}\sigma_{j}^{2}\right)^{\frac{1}{2}},
\end{equation}
where $\alpha=\frac{\sigma_{r+1}}{\sigma_{r}}$, $\Lambda_{1}=V_{r}^{\rmH}\Omega$ and $\Lambda_{2}=V_{0}^{\rmH}\Omega$.
\end{theorem}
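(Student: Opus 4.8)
The plan is to trace the approximation error through Algorithm~\ref{alg:A} and reduce it to a single deterministic bound that I can then take expectations of using Lemmas~\ref{lem:sgt} and~\ref{lem:ginv}. First I would observe that the final approximation is exactly $\hat A = Q_\tau Q_\tau^{\rmH} A = P_{Q_\tau} A$, since the subsequent QR steps (lines~8--10) merely repackage $Q_\tau^{\rmH}A$ into the factored form $\hat U\hat D\hat V^{\rmH}$ without changing the product. Hence $\|A-\hat A\|_{\rmF} = \|(I-P_{Q_\tau})A\|_{\rmF}$, and everything hinges on how well $\mathrm{range}(Q_\tau)$ captures $\mathrm{range}(A)$. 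By construction $\mathrm{range}(Q_\tau)=\mathrm{range}\big((AA^{\rmH})^\tau A\Omega\big)$, which is the standard randomized subspace-iteration range with the Gaussian test matrix $\Omega$ feeding through the orthogonalized basis $Q_0$; note that by Theorem~\ref{th3.4}, $Q_0$ already spans $\mathrm{range}(A\Omega)$ exactly (rank $r$ with probability one), so no accuracy is lost in the first step and the power iteration acts on the full column space.

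Next I would split the error along the SVD \eqref{asvd}. Write $A = \tilde U_r\Sigma_r\tilde V_r^{\rmH} + \tilde U_0\Sigma_0\tilde V_0^{\rmH}$; in the notation of Lemma~\ref{lem1} set $\Lambda_1 = \tilde V_r^{\rmH}\Omega$ and $\Lambda_2 = \tilde V_0^{\rmH}\Omega$, which are independent standard Gaussian matrices of sizes $r\times d$ and $(n-r)\times d$. Since $d\geq r$ and $d-r\geq 2$, $\Lambda_1$ has full row rank almost surely, so it has a right inverse $\Lambda_1^{\dagger}$. The key algebraic step is to exhibit a particular matrix in $\mathrm{range}(Q_\tau)$ that is close to $A$: consider $Z = (AA^{\rmH})^\tau A\Omega\,\Lambda_1^{\dagger}\Sigma_r^{-1}\big((AA^{\rmH})^\tau\big)^{\dagger}$-type constructions as in \cite[Sec.~9--10]{hmt11}, or more cleanly, apply Lemma~\ref{lem:p} with $M = (AA^{\rmH})^\tau A\Omega$ and $N = \tilde U_r$ restricted appropriately. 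The cleanest route mirrors the Halko--Martinsson--Tropp deterministic bound: one shows
\begin{equation*}
\|(I-P_{Q_\tau})A\|_{\rmF}^2 \;\leq\; \|\Sigma_0^{2\tau+1}\Lambda_2\Lambda_1^{\dagger}\Sigma_r^{-2\tau}\|_{\rmF}^2 \;+\; \|\Sigma_0\|_{\rmF}^2,
\end{equation*}
by projecting onto the span of the first $r$ iterated directions and bounding the residual block. Using $\|\Sigma_0^{2\tau+1}\Lambda_2\Lambda_1^{\dagger}\Sigma_r^{-2\tau}\|_{\rmF} \leq \|\Sigma_r^{-2\tau}\|\cdot\|\Sigma_0^{2\tau}\|\cdot\|\Sigma_0\Lambda_2\Lambda_1^{\dagger}\|_{\rmF} \leq (\sigma_{r+1}/\sigma_r)^{2\tau}\|\Sigma_0\Lambda_2\Lambda_1^{\dagger}\|_{\rmF} = \alpha^{2\tau}\|\Sigma_0\Lambda_2\Lambda_1^{\dagger}\|_{\rmF}$ gives the first displayed inequality of the theorem (after squaring, the cross term is handled by the $\alpha^{4\tau}$ factor).

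For the expectation bound I would condition on $\Lambda_1$ and apply Lemma~\ref{lem:sgt} to the inner Gaussian matrix $\Lambda_2$: $\mathbb{E}_{\Lambda_2}\|\Sigma_0\Lambda_2\Lambda_1^{\dagger}\|_{\rmF}^2 = \|\Sigma_0\|_{\rmF}^2\|\Lambda_1^{\dagger}\|_{\rmF}^2$. Then Lemma~\ref{lem:ginv}, valid because $d-r\geq 2$, gives $\mathbb{E}\|\Lambda_1^{\dagger}\|_{\rmF}^2 = r/(d-r-1)$. Combining with the deterministic bound and using $\|\Sigma_0\|_{\rmF}^2 = \sum_{j>r}\sigma_j^2$ yields $\mathbb{E}\|A-\hat A\|_{\rmF}^2 \leq \big(1 + \frac{r\alpha^{4\tau}}{d-r-1}\big)\sum_{j>r}\sigma_j^2$, and Jensen's inequality ($\mathbb{E}\|\cdot\| \leq (\mathbb{E}\|\cdot\|^2)^{1/2}$) delivers \eqref{bound}. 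The main obstacle I anticipate is the first, deterministic step: carefully justifying that $\mathrm{range}(Q_\tau)$ after $\tau$ two-sided QR sweeps coincides with $\mathrm{range}\big((AA^{\rmH})^\tau A\Omega\big)$ (this is essentially the telescoping identity already displayed in the proof of the preceding theorem, so it can be cited), and then choosing the right test matrix inside the column space to get the $\Sigma_r^{-2\tau}$ and $\Sigma_0^{2\tau}$ factors to appear with the correct exponents — this is where the bookkeeping in \cite{hmt11} is delicate and must be adapted to the rank-exact setting where $\Sigma_0$ is genuinely zero in exact arithmetic but represents the truncated tail in the numerical-rank interpretation of Remark~\ref{rem3.6}.
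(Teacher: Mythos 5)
Your proposal is correct and follows essentially the same route as the paper's proof: reduce the error to $\|(I-P_{Q_{\tau}})A\|_{\rmF}$ via the telescoping range identity for $(AA^{\rmH})^{\tau}A\Omega$, use the Halko--Martinsson--Tropp construction with $\Lambda_{1}=\tilde{V}_{r}^{\rmH}\Omega$ and $\Lambda_{2}=\tilde{V}_{0}^{\rmH}\Omega$ to reach the deterministic bound $\|\Sigma_{0}^{2\tau+1}\Lambda_{2}\Lambda_{1}^{\dagger}\Sigma_{r}^{-2\tau}\|_{\rmF}^{2}+\|\Sigma_{0}\|_{\rmF}^{2}\leq\alpha^{4\tau}\|\Sigma_{0}\Lambda_{2}\Lambda_{1}^{\dagger}\|_{\rmF}^{2}+\|\Sigma_{0}\|_{\rmF}^{2}$, and then condition on $\Lambda_{1}$, apply Lemmas~\ref{lem:sgt} and~\ref{lem:ginv}, and finish with Jensen's inequality, exactly as the paper does. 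The one step you defer to \cite{hmt11} is precisely what the paper carries out explicitly by forming $\tilde{B}=\hat{B}\Lambda_{1}^{\dagger}\Sigma_{r}^{-(2\tau+1)}$ with blocks $I$ and $S=\Sigma_{0}^{2\tau+1}\Lambda_{2}\Lambda_{1}^{\dagger}\Sigma_{r}^{-(2\tau+1)}$ and invoking Lemmas~\ref{lem:p} and~\ref{lem:8.2}, so no essential idea is missing.
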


\begin{proof}
By Lemma \ref{lem:pm}, we write
\begin{equation}\label{th3.3:1}
  \|A-\hat{A}\|_{\rmF}=\|A-Q_{\tau}Q_{\tau}^{\rmH}A\|_{\rmF}=\|(I-P_{Q_{\tau}})A\|_{\rmF}.
\end{equation}
We observe that $B$ is represented as
\begin{equation*}
  B = (AA^{\rmH})^{\tau}A\Omega = \tilde{U}\left[
                   \begin{array}{cc}
                     \Sigma_{r}^{2\tau+1} & O \\
                     O & \Sigma_{0}^{2\tau+1} \\
                   \end{array}
                 \right]\tilde{V}^{\rmH}\Omega.
\end{equation*}
Let $\tilde{V}^{\rmH}\Omega = \left[
                     \begin{array}{c}
                       \tilde{V}_{r}^{\rmH}\Omega \\
                       \tilde{V}_{0}^{\rmH}\Omega \\
                     \end{array}
                   \right] :=
                   \left[
                     \begin{array}{c}
                       \Lambda_{1} \\
                       \Lambda_{2} \\
                     \end{array}
                   \right]
$.
Now, we form $\hat{B}$ as
\begin{equation*}
  \hat{B} = \tilde{U}^{\rmH}B = \left[
                       \begin{array}{c}
                         \Sigma_{r}^{2\tau+1}\Lambda_{1} \\
                         \Sigma_{0}^{2\tau+1}\Lambda_{2} \\
                       \end{array}
                     \right].
\end{equation*}
We therefore have
\begin{equation*}
  \mathrm{range}(\hat{B})=\mathrm{range}(\tilde{U}^{\rmH}B)=\mathrm{range}(\tilde{U}^{\rmH}Q_{\tau}).
\end{equation*}
Form another matrix $\tilde{B}$ by shrinking the subspace of $\hat{B}$:
\begin{equation}\label{is}
  \tilde{B} = \hat{B}\Lambda_{1}^{\dagger}\Sigma_{r}^{-(2\tau+1)} = \left[
                           \begin{array}{c}
                             I \\
                             S \\
                           \end{array}
                         \right],
\end{equation}
where $S = \Sigma_{0}^{2\tau+1}\Lambda_{2}\Lambda_{1}^{\dagger}\Sigma_{r}^{-(2\tau+1)}$.
The construction \eqref{is} ensures that
\begin{equation}\label{range}
  \mathrm{range}(\tilde{B}) \subset \mathrm{range}(\hat{B}) = \mathrm{range}(\tilde{U}^{\rmH}Q_{\tau}).
\end{equation}
By Lemmas \ref{lem:p} and \ref{lem:8.2}, we get
\begin{equation}\label{po4}
\begin{aligned}
  \|I-P_{\tilde{U}^{\rmH}Q_{\tau}}\|_{\rmF}\leq\|I-P_{\tilde{B}}\|_{\rmF} &= \left\| \left[\begin{array}{cc}
I-(I+S^{\rmH}S)^{-1}&-(I+S^{\rmH}S)^{-1}S^{\rmH}\\
-S(I+S^{\rmH}S)^{-1}&I-S(I+S^{\rmH}S)^{-1}S^{\rmH}
\end{array}\right] \right\|_{\rmF} \\
& \leq \left\|\left[\begin{array}{cc}
S^{\rmH}S&-(I+S^{\rmH}S)^{-1}S^{\rmH}\\-S(I+S^{\rmH}S)^{-1}&I
\end{array}\right]\right\|_{\rmF}.
\end{aligned}
\end{equation}
For the Frobenius norm, Lemmas \ref{lem:pm} and \ref{lem:p}, \eqref{asvd}, \eqref{range} and \eqref{po4}, we have
\begin{equation*}
\begin{aligned}
\|(I-P_{Q_{\tau}})A\|_{\rmF}^{2} &= \|\tilde{U}^{\rmH}(I-P_{Q_{\tau}})\tilde{U}\Sigma \tilde{V}^{\rmH}\|_{\rmF}^{2}
= \|(I-P_{\tilde{U}^{\rmH}Q_{\tau}})\Sigma \tilde{V}^{\rmH}\|_{\rmF}^{2}\\
&\leq\|(I-P_{\tilde{B}})\Sigma\|_{\rmF}^{2}
=\mathrm{tr}(\Sigma^{\rmH}(I-P_{\tilde{B}})\Sigma)\\
&\leq \mathrm{tr}(\Sigma_{r}^{\rmH}S^{\rmH}S\Sigma_{r})+\mathrm{tr}(\Sigma_{0}^{\rmH}\Sigma_{0})
=\|S\Sigma_{r}\|_{\rmF}^{2}+\|\Sigma_{0}\|_{\rmF}^{2}\\
&\leq\alpha^{4\tau}\|\Sigma_{0}\Lambda_{2}\Lambda_{1}^{\dagger}\|_{\rmF}^{2}+\|\Sigma_{0}\|_{\rmF}^{2},
\end{aligned}
\end{equation*}
where $\alpha=\frac{\sigma_{r+1}}{\sigma_{r}}$. Recall that $\Lambda_{1} = V_{r}^{\rmH}\Omega$ and $\Lambda_{2} = V_{0}^{\rmH}\Omega$. The Gaussian distribution is rotationally invariant, so $V^{\rmH}\Omega$ is also a standard Gaussian
matrix. Observe that $\Lambda_{1}$ and $\Lambda_{2}$ are nonoverlapping submatrices of $V^{\rmH}\Omega$. So
these two matrices are not only standard Gaussian but also stochastically independent.
By \eqref{th3.3:1} and Lemmas \ref{lem:sgt} and \ref{lem:ginv}, we have
\begin{equation*}
\begin{aligned}
    \mathbb{E}\|A-\hat{A}\|_{\rmF} &\leq \sqrt{\alpha^{4\tau}\mathbb{E}\|\Sigma_{0}\Lambda_{2}\Lambda_{1}^{\dagger}\|_{\rmF}^{2}+\|\Sigma_{0}\|_{\rmF}^{2}}
    = \sqrt{\alpha^{4\tau}\|\Sigma_{0}\|_{\rmF}^{2}\mathbb{E}\|\Lambda_{1}^{\dagger}\|_{\rmF}^{2} + \|\Sigma_{0}\|_{\rmF}^{2}}\\
    &= \left(1+\frac{r\alpha^{4\tau}}{d-r-1}\right)^{\frac{1}{2}}\|\Sigma_{0}\|_{\rmF}
    = \left(1+\frac{r\alpha^{4\tau}}{d-r-1}\right)^{\frac{1}{2}}\left(\sum_{j>r}\sigma_{j}^{2}\right)^{\frac{1}{2}}.
\end{aligned}
\end{equation*}
\end{proof}

\begin{remark}
When $q=0$, the error bound on the right side of \eqref{bound} reduced to the error bound in Theorem 10.5 of \cite{hmt11}, i.e.,
  $\mathbb{E}\|A-\hat{A}\|_{\mathrm{F}}\leq\left(1+\frac{r}{d-r-1}\right)^{\frac{1}{2}}\left(\sum_{j>r}\sigma_j^2\right)^{\frac{1}{2}}$.
  When $q\geq 1$, the error bound \eqref{bound} of this paper is sharper than that in \cite{hmt11}.

In \cite{kc20}, the error bound proposed by Kaloorazi and Chen is 
  \[\mathbb{E}\|A-\hat{A}\|_{\rmF} \leq \left(1+\left(\frac{r\alpha^{4\tau}}{d-r-1}\right)^{\frac{1}{2}}\right)\left(\sum_{j>r}\sigma_{j}^{2}\right)^{\frac{1}{2}}.\]
Clearly, our error bound \eqref{bound} is also sharper than it.
\end{remark}

The following theorem presents the perturbation bound between the diagonal elements of the matrix $\hat{D}$ and the singular values of the matrix $A$.

\begin{theorem}
Let $A \in \mathbb{C}^{m \times n}$ have the SVD of the form \eqref{asvd}.
Let the orthogonal decomposition of $\hat{A} = Q_{\tau}Q_{\tau}^{\rmH}A$ be $\hat{A} = \hat{U}\hat{D}\hat{V}^{\rmH}$,
where $\hat{U}\in \mathbb{C}^{m \times r}$ and $\hat{V} \in \mathbb{C}^{n \times r}$ are orthonormal matrices, and $\hat{D} \in \mathbb{C}^{r \times r}$ is an upper triangular matrix. Then the following bound holds:
\begin{equation*}
\begin{aligned}
\max_{1\leq i\leq r}|d_{ii} - \sigma_{i}| \leq \|A\|_{\rmF} (\|Q^{\bot}\|_{2} + \|\hat{U}^{\rmH}\tilde{U} - I\|_{2} + \|\hat{V}^{\rmH} \tilde{V} - I\|_{2}),
\end{aligned}
\end{equation*}
where $d_{ii}$ is the $i$th diagonal element of $\hat{D}$, $\sigma_{i}$ is the $i$th singular value of $A$, $Q^{\bot}=I-Q_{\tau}Q_{\tau}^{\rmH}$ gradually approaches zero matrix as the number of columns of $Q_{\tau}$ increases to $n$.
\end{theorem}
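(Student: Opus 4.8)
The plan is to realize each $d_{ii}$ and each $\sigma_{i}$ as a diagonal entry of an $r\times r$ compression onto orthonormal bases, and then to estimate the difference of those two compressions by a three-term telescoping in which each term is responsible for exactly one summand on the right-hand side.

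First I would record two elementary identities. Since $\hat{U}$ and $\hat{V}$ have orthonormal columns and $\hat{A}=\hat{U}\hat{D}\hat{V}^{\rmH}$, we have $\hat{U}^{\rmH}\hat{A}\hat{V}=\hat{D}$, hence $d_{ii}=e_{i}^{\rmH}\hat{U}^{\rmH}\hat{A}\hat{V}e_{i}$. Writing $\tilde{U}\in\mathbb{C}^{m\times r}$ and $\tilde{V}\in\mathbb{C}^{n\times r}$ for the leading blocks of the SVD factors in \eqref{asvd} that correspond to the nonzero singular values, so that $A=\tilde{U}\Sigma\tilde{V}^{\rmH}$ with $\Sigma=\mathrm{diag}(\sigma_{1},\dots,\sigma_{r})$ and $\tilde{U}^{\rmH}\tilde{U}=\tilde{V}^{\rmH}\tilde{V}=I$, we likewise get $\tilde{U}^{\rmH}A\tilde{V}=\Sigma$ and $\sigma_{i}=e_{i}^{\rmH}\tilde{U}^{\rmH}A\tilde{V}e_{i}$. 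Therefore $d_{ii}-\sigma_{i}=e_{i}^{\rmH}\big(\hat{U}^{\rmH}\hat{A}\hat{V}-\tilde{U}^{\rmH}A\tilde{V}\big)e_{i}$, and since $|e_{i}^{\rmH}Xe_{i}|\le\|X\|_{2}$ for every matrix $X$, it is enough to bound $\|\hat{U}^{\rmH}\hat{A}\hat{V}-\tilde{U}^{\rmH}A\tilde{V}\|_{2}$ and then take the maximum over $i$.

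Next I would split that difference by swapping $\hat{A}\mapsto A$, then $\hat{U}\mapsto\tilde{U}$, then $\hat{V}\mapsto\tilde{V}$, one factor at a time, which---using $A=\tilde{U}\Sigma\tilde{V}^{\rmH}$ and the orthonormality of $\tilde{U},\tilde{V}$---yields the identity
\begin{equation*}
\hat{U}^{\rmH}\hat{A}\hat{V}-\tilde{U}^{\rmH}A\tilde{V}=\hat{U}^{\rmH}(\hat{A}-A)\hat{V}+(\hat{U}^{\rmH}\tilde{U}-I)\,\Sigma\,\tilde{V}^{\rmH}\hat{V}+\Sigma\,(\tilde{V}^{\rmH}\hat{V}-I).
\end{equation*}
For the first term, $\hat{A}-A=(Q_{\tau}Q_{\tau}^{\rmH}-I)A=-Q^{\bot}A$, so submultiplicativity of the spectral norm with $\|\hat{U}\|_{2}=\|\hat{V}\|_{2}=1$ gives the bound $\|Q^{\bot}\|_{2}\|A\|_{2}$. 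For the second term, $\|\Sigma\|_{2}=\|A\|_{2}$ and $\|\tilde{V}^{\rmH}\hat{V}\|_{2}\le1$ give $\|\hat{U}^{\rmH}\tilde{U}-I\|_{2}\|A\|_{2}$. For the third term, $\tilde{V}^{\rmH}\hat{V}-I=(\hat{V}^{\rmH}\tilde{V}-I)^{\rmH}$, giving $\|\hat{V}^{\rmH}\tilde{V}-I\|_{2}\|A\|_{2}$. Adding the three estimates, replacing $\|A\|_{2}$ by the larger $\|A\|_{\rmF}$, and maximizing over $i$ produces exactly the stated bound.

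I expect the main obstacle to be arranging the telescoping in the correct order so that no term carries a product of two ``defect'' factors: the replacement $\hat{U}\mapsto\tilde{U}$ must be performed while the right-hand copy of $A$ is still in the factored form $\tilde{U}\Sigma\tilde{V}^{\rmH}$, so that the error collapses to $(\hat{U}^{\rmH}\tilde{U}-I)\Sigma\tilde{V}^{\rmH}\hat{V}$; and the replacement $\hat{V}\mapsto\tilde{V}$ must come last, so that $\tilde{U}^{\rmH}A=\Sigma\tilde{V}^{\rmH}$ and the error collapses to $\Sigma(\tilde{V}^{\rmH}\hat{V}-I)$. A secondary bookkeeping point is to fix $\tilde{U},\tilde{V}$ as the rank-$r$ truncations of the SVD factors, so that the four compressions $\hat{U}^{\rmH}\hat{A}\hat{V}$, $\hat{U}^{\rmH}A\hat{V}$, $\tilde{U}^{\rmH}A\hat{V}$, $\Sigma$ are all genuinely $r\times r$ and their $(i,i)$ entries line up with $d_{ii}$ and $\sigma_{i}$; the closing remark in the statement---that $Q^{\bot}$ effectively shrinks as $Q_{\tau}$ gains columns---is then just the observation that $Q^{\bot}A\to O$ once $\mathrm{range}(Q_{\tau})\supseteq\mathrm{range}(A)$, and requires no extra argument.
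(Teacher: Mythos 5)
Your proposal is correct and follows essentially the same route as the paper: both express $\hat{D}-\Sigma_r$ (equivalently $\hat{U}^{\rmH}\hat{A}\hat{V}-\tilde{U}^{\rmH}A\tilde{V}$) via the same three-way split into a $Q^{\bot}$ term, a $(\hat{U}^{\rmH}\tilde{U}-I)\Sigma\tilde{V}^{\rmH}\hat{V}$ term, and a $\Sigma(\tilde{V}^{\rmH}\hat{V}-I)$ term, then apply submultiplicativity and $\|A\|_{2}\leq\|A\|_{\rmF}$. The only cosmetic differences are that you bound $|d_{ii}-\sigma_{i}|$ directly by the spectral norm of the difference (the paper passes through $\|\hat{D}-\Sigma\|_{\rmF}$) and you keep the first term as $\hat{U}^{\rmH}(\hat{A}-A)\hat{V}$ instead of the paper's grouping $\hat{U}^{\rmH}Q_{\tau}Q_{\tau}^{\rmH}\tilde{U}-I$; your explicit use of the rank-$r$ truncated factors also tidies a dimensional ambiguity in the paper's write-up.
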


\begin{proof}
Substituting the SVD of $A$ into $\hat{A} = Q_{\tau}Q_{\tau}^{\rmH}A$ and $\hat{A} = \hat{U}\hat{D}\hat{V}^{\rmH}$, we have
\[
\hat{D} = \hat{U}^{\rmH} Q_{\tau}Q_{\tau}^{\rmH} (\tilde{U} \Sigma \tilde{V}^{\rmH}) \hat{V}.
\]
Then
\[
\hat{D} - \Sigma = \hat{U}^{\rmH} Q_{\tau}Q_{\tau}^{\rmH} \tilde{U} \Sigma \tilde{V}^{\rmH} \hat{V} - \Sigma.
\]
Adding and subtracting $I$, we decompose the difference as
\[
\hat{D} - \Sigma = (\hat{U}^{\rmH} Q_{\tau}Q_{\tau}^{\rmH} \tilde{U} - I) \Sigma \tilde{V}^{\rmH} \hat{V} + \Sigma (\tilde{V}^{\rmH} \hat{V} - I).
\]
Since
\begin{equation*}
\begin{aligned}
\|\hat{U}^{\rmH} Q_{\tau}Q_{\tau}^{\rmH} \tilde{U} - I\|_{2} &= \|\hat{U}^{\rmH} (Q_{\tau}Q_{\tau}^{\rmH} - I) \tilde{U} + \hat{U}^{\rmH}\tilde{U} - I\|_{2} \\
&\leq \|\hat{U}^{\rmH} (Q_{\tau}Q_{\tau}^{\rmH} - I) \tilde{U}\|_{2} + \|\hat{U}^{\rmH}\tilde{U} - I\|_{2} \\
&\leq \|Q^{\bot}\|_{2} + \|\hat{U}^{\rmH}\tilde{U} - I\|_{2},
\end{aligned}
\end{equation*}
we can get
\begin{equation*}
\begin{aligned}
\max_{1\leq i\leq r}|d_{ii} - \sigma_{i}|&\leq \|\hat{D} - \Sigma\|_{\rmF}
\leq \|(\hat{U}^{\rmH} Q_{\tau}Q_{\tau}^{\rmH} \tilde{U} - I) \Sigma \tilde{V}^{\rmH} \hat{V}\|_{\rmF} + \|\Sigma (\tilde{V}^{\rmH} \hat{V} - I)\|_{\rmF} \\
&\leq \|\hat{U}^{\rmH} Q_{\tau}Q_{\tau}^{\rmH} \tilde{U} - I\|_{2} \|\Sigma\|_{\rmF} \|\tilde{V}^{\rmH} \hat{V}\|_{2} 
 + \|\Sigma\|_{\rmF} \|\tilde{V}^{\rmH} \hat{V} - I\|_{2} \\
&\leq (\|Q^{\bot}\|_{2} + \|\hat{U}^{\rmH}\tilde{U} - I\|_{2}) \|\Sigma\|_{\rmF} 
 + \|\Sigma\|_{\rmF} \|\tilde{V}^{\rmH} \hat{V} - I\|_{2} \\
&\leq \|A\|_{\rmF} (\|Q^{\bot}\|_{2} + \|\hat{U}^{\rmH}\tilde{U} - I\|_{2} + \|\hat{V}^{\rmH} \tilde{V} - I\|_{2}),
\end{aligned}
\end{equation*}
where $Q^{\bot}=I-Q_{\tau}Q_{\tau}^{\rmH}$ gradually approaches zero matrix as the number of columns of $Q_{\tau}$ increases to $n$.
\end{proof}

\subsection{Computational cost} Now, we analyze the arithmetic and communication costs of our algorithm.

\textbf{Arithmetic Cost.} To compute an approximation of matrix $A$, Algorithm \ref{alg:A} requires the following arithmetic operations:

\begin{table}[H]
\centering
\begin{tabular}{ll}
  \hline
  Step & Computational complexity \\
  \hline
  Computing $Q_{0}$ by Algorithm \ref{alg:qr} & $mnr+4mrk$ \\
  Forming $C$ in step \ref{CQA} & $mnr$ \\
  Computing $Q$ and $R$ in step \ref{C1QR} & $2nr^2$\\
  Computing $\hat{Q}$ and $\hat{R}$ in step \ref{RQR} & $2r^3$\\
  Computing $\hat{U}$, $\hat{D}$ and $\hat{V}$ in step \ref{AUDV} & $mr^2$\\
  \hline
\end{tabular}
\end{table}

The total computational complexity of the above steps is $2mnr+mr^2+2nr^2+2r^3+4mrk$.

After subspace iteration, the complexity of step 2-7 of Algorithm \ref{alg:A} increases by
$$\tau(2mnr+2mr^2+2nr^2).$$
Therefore, the total complexity of Algorithm \ref{alg:A} is 
$$(\tau+1)(2mnr+mr^2+2nr^2)+\tau mr^{2}+2r^3+4mrk.$$

\textbf{Communication Cost.} Communication costs arise from data movement between processors operating in parallel and across different levels of the memory hierarchy. On modern computing platforms, these costs often dominate the process of factoring matrices stored in external memory \cite{adg2017,dgh2012,kc20}. Consequently, reducing communication overhead is critical for the efficient execution of any factoring algorithm . Algorithm \ref{alg:qr} achieves this by leveraging several parallelizable matrix-matrix multiplications. Moreover, during the QR decomposition, computations are restricted to small $m\times k$ matrices in each step. In \cite{dgh2012}, the proposed method can carry out QR decomposition with minimum communication costs. These features make Algorithm \ref{alg:A} well-suited for execution on high-performance computing platforms.

\section{Numerical experiments}\label{sec:numexp}

In this section, we compare our algorithm with the state-of-the-art image processing methods by applying them to image compression and reconstruction. We also compare our method with the recently proposed randomized methods on robustness analysis and low-rank approximation. The experiments were run in MATLAB R2021b on a desktop PC with a 3.30 GHz AMD Ryzen 9 5900HX processor and 16 GB of memory. The source code of our method is available  at \url{https://github.com/xuweiwei1/EOD-ABE.git}.

\subsection{Image compression and reconstruction}
The goal of this experiment is to evaluate the performance of Algorithm \ref{alg:A} on real world data.
We randomly choose two color images (``Baboon'' and ``Lighthouse'') from the real datasets\footnote{\url{https://www.imageprocessingplace.com/root_files_V3/image_databases.htm} and \url{http://r0k.us/graphics/kodak}}.
Algorithm \ref{alg:A} is  compared with the well-known  Economy-sized SVD, RSVD \cite{hmt11}, TSR-SVD \cite{hmt11}, CoR-UTV \cite{kl18}, RP-TSOD \cite{kc20}, PbP-QLP \cite{kc21}, randUTV \cite{mqh2019}, farPCA \cite{fy2023}, Adaptive PCA \cite{dyxl2020}, Adaptive RSVD \cite{hmt11} and FSVD \cite{nsr}.

\begin{figure}
  \centering
      \includegraphics[width=5.0in]{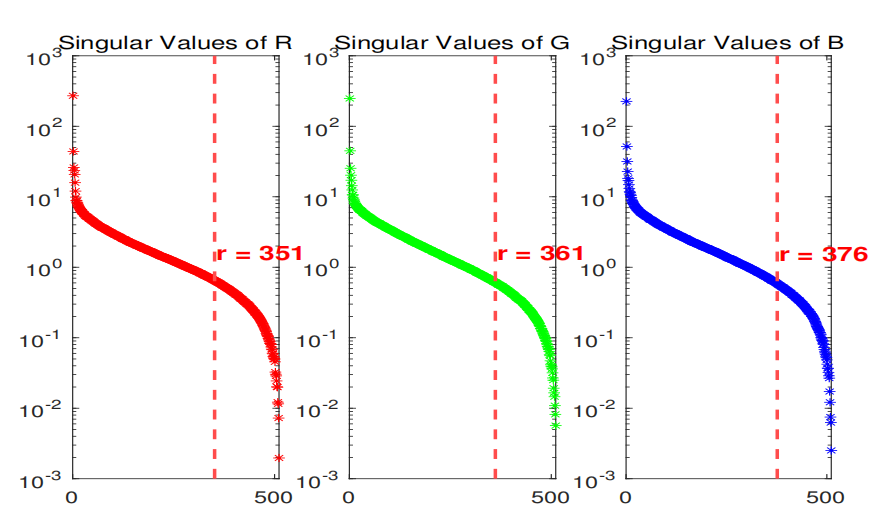}
  \caption{Singular values of the three color channels $R$, $G$ and $B$ of the color image ``Baboon''. To show this more clearly, the last singular values of the three matrices $R$, $G$, and $B$ are not drawn in the figure. The vertical red dotted lines in the figure show the numerical rank calculated by Algorithm \ref{alg:A}.}\label{rgb_sv}
\end{figure}

\begin{table}
\setlength\tabcolsep{3pt}
\renewcommand{\arraystretch}{1.25}  
	\centering \scriptsize
	\caption{Comparison of different methods on  color image ``Baboon'' with dimension $512\times512$}\label{example-t}
	\begin{tabular}{lccccccc}
		\hline
		\multirow{2}{*}{Algorithms}  & &  & Time (second)  & & \multirow{2}{*}{RelErr} & \multirow{2}{*}{PSNR (dB)} & \multirow{2}{*}{SSIM}\\
		  \cline{3-5}
		& & $R_{\text{time}}$  / rank & $G_{\text{time}}$ / rank & $B_{\text{time}}$  / rank& & \\
		\hline
		\multirow{1}{*}{Economy-sized SVD}   &               & 38.24 / 351 & 37.11 / 361 & 36.69 / 376 & 0.015881 & 41.724372 & 0.997211 \\
\cline{2-8}
		\multirow{3}{*}{RSVD \cite{hmt11}}        & $\tau=0$ & 28.91 / 351 & 31.04 / 361 & 33.70 / 376 & 0.036394 & 34.521194  & 0.985974 \\
		& $\tau=1$ & 40.21 / 341 & 42.86 / 362 & 46.27 / 380 & 0.018546 & 40.376575 & 0.996184 \\
		& $\tau=2$ & 50.97 / 352 & 54.14 / 371 & 58.48 / 385 & 0.015548 & 41.908470 & 0.997299 \\
\cline{2-8}
		\multirow{3}{*}{TSR-SVD \cite{hmt11}}          & $\tau=0$ & 34.44 / 351 & 36.81 / 361  & 39.70 / 376  & 0.036384 & 34.523664 & 0.986045 \\
		& $\tau=1$ & 54.92 / 341 & 58.97 / 362 & 64.40 / 380  & 0.018479 & 40.408426 & 0.996205 \\
		& $\tau=2$ & 75.61 / 352 & 81.72 / 371 & 88.67 / 385  & 0.015568 & 41.896972 & 0.997281 \\
\cline{2-8}
		\multirow{3}{*}{CoR-UTV \cite{kl18}}           & $\tau=0$ &\underline{23.63} / 351 & \underline{25.13} / 361 & \underline{27.03} / 376 & 0.036417 & 34.515798 & 0.986013 \\
		& $\tau=1$ & 33.85 / 341 & 36.34 / 362 & 39.14 / 380 & 0.018535 & 40.381701 & 0.996169 \\
		& $\tau=2$ & 44.43 / 352 & 47.67 / 371 & 51.37 / 385 & 0.015564 & 41.899396 & 0.997286 \\
\cline{2-8}
		\multirow{3}{*}{PbP-QLP \cite{kc20}}           & $\tau=0$ & 32.50 / 351 & 34.57 / 361 & 37.32 / 376 & 0.036603 & 34.471466 & 0.986532 \\
		& $\tau=1$ & 42.13 / 341 & 45.32 / 362 & 48.98 / 380 & 0.018493 & 40.401700 & 0.996210 \\
		& $\tau=2$ & 53.07 / 352 & 56.71 / 371 & 61.13 / 385 & 0.015575 & 41.893133 & 0.997275 \\
\cline{2-8}
		\multirow{3}{*}{RP-TSOD \cite{kc21}}           & $\tau=0$ & 25.20 / 351 & 26.76 / 361 & 28.80 / 376 & 0.036595 & 34.473419 & 0.986465 \\
		& $\tau=1$ & 35.23 / 341 & 37.59 / 362 & 40.59 / 380 & 0.018503 & 40.396742 & 0.996180 \\
		& $\tau=2$ & 45.69 / 352 & 48.91 / 371 & 52.78 / 385 & 0.015593 & 41.883123 & 0.997282 \\
\cline{2-8}
		\multirow{3}{*}{randUTV \cite{mqh2019}}           & $\tau=0$ & 48.92 / 351 & 48.29 / 361 & 48.24 / 376 & 0.037403 & 34.283740 & 0.986072 \\
		& $\tau=1$ & 50.27 / 341 & 50.16 / 362 & 50.57 / 380 & 0.017121 & 41.071143 & 0.996775 \\
		& $\tau=2$ & 52.58 / 352 & 52.74 / 371 & 52.92 / 385 & 0.016158 & 41.573854 & 0.997117 \\
\cline{2-8}
		\multirow{3}{*}{farPCA \cite{fy2023}}            & $\tau=0$ & 31.71 / 360 & 31.74 / 360 & 32.04 / 360 & 0.037381 & 34.288735 & 0.985270 \\
		& $\tau=1$ & 40.79 / 360 & 40.80 / 360 & 40.85 / 360 & 0.018508 & 40.394744 & 0.996222 \\
		& $\tau=2$ & 49.52 / 360 & 49.70 / 360 & 49.76 / 360 & 0.017108 & 41.077914 & 0.996797 \\
\cline{2-8}
		\multirow{3}{*}{Adaptive PCA \cite{dyxl2020}}      & $\tau=0$ & 29.45 / 360 & 29.29 / 360 & 29.22 / 360 & 0.037466 & 34.269017 & 0.985213 \\
		& $\tau=1$ & 36.20 / 360 & 36.03 / 360 & 36.05 / 360 & 0.018518 & 40.389887 & 0.996235 \\
		& $\tau=2$ & 40.62 / 360 & 40.51 / 360 & 40.71 / 360 & 0.017106 & 41.078877 & 0.996788 \\
\cline{2-8}
		\multirow{3}{*}{Adaptive RSVD \cite{hmt11}}      & $\tau=0$ & 41.17 / 348 & 47.37 / 362 & 52.92 / 369 & 0.036839 & 34.375644 & 0.985570 \\
		& $\tau=1$ & 68.32 / 362 & 56.83 / 352 & 69.63 / 375 & 0.017763 & 40.677953 & 0.996567 \\
		& $\tau=2$ & 71.54 / 347 & 67.06 / 353 & 70.90 / 381 & 0.015854 & 41.737632 & 0.997139 \\
\cline{2-8}
		\multirow{3}{*}{FSVD \cite{nsr}} & $\tau=0$ & 23.98 / 356 & 26.70 / 364 & 28.10 / 390 & 0.034276 & 35.029707 & 0.987661 \\
		& $\tau=1$ & 33.63 / 363 & 38.78 / 370 & 40.16 / 387 & 0.016032 & 41.628486 & 0.997172 \\
		& $\tau=2$ & 43.53 / 348 & 48.07 / 371 & 50.77 / 392 & 0.015226 & 41.934650 & 0.997301 \\
\cline{2-8}
		\multirow{3}{*}{Algorithm \ref{alg:A}} & $\tau=0$ & \textbf{22.38} / 351 & \textbf{23.36} / 361 & \textbf{24.62}  / 376 & 0.036414 & 34.516514 & 0.985997 \\
		& $\tau=1$ & 32.95 / 341 & 34.85 / 362 & 36.90 / 380 & 0.018515 & 40.391477 & 0.996179 \\
		& $\tau=2$ & 43.68 / 352 & 46.20 / 371 & 48.87 / 385 & 0.015553 & 41.905593 & 0.997290 \\
		\hline
	\end{tabular}
\end{table}

\begin{figure}[!ht]
    \centering
      \includegraphics[width=5.0in]{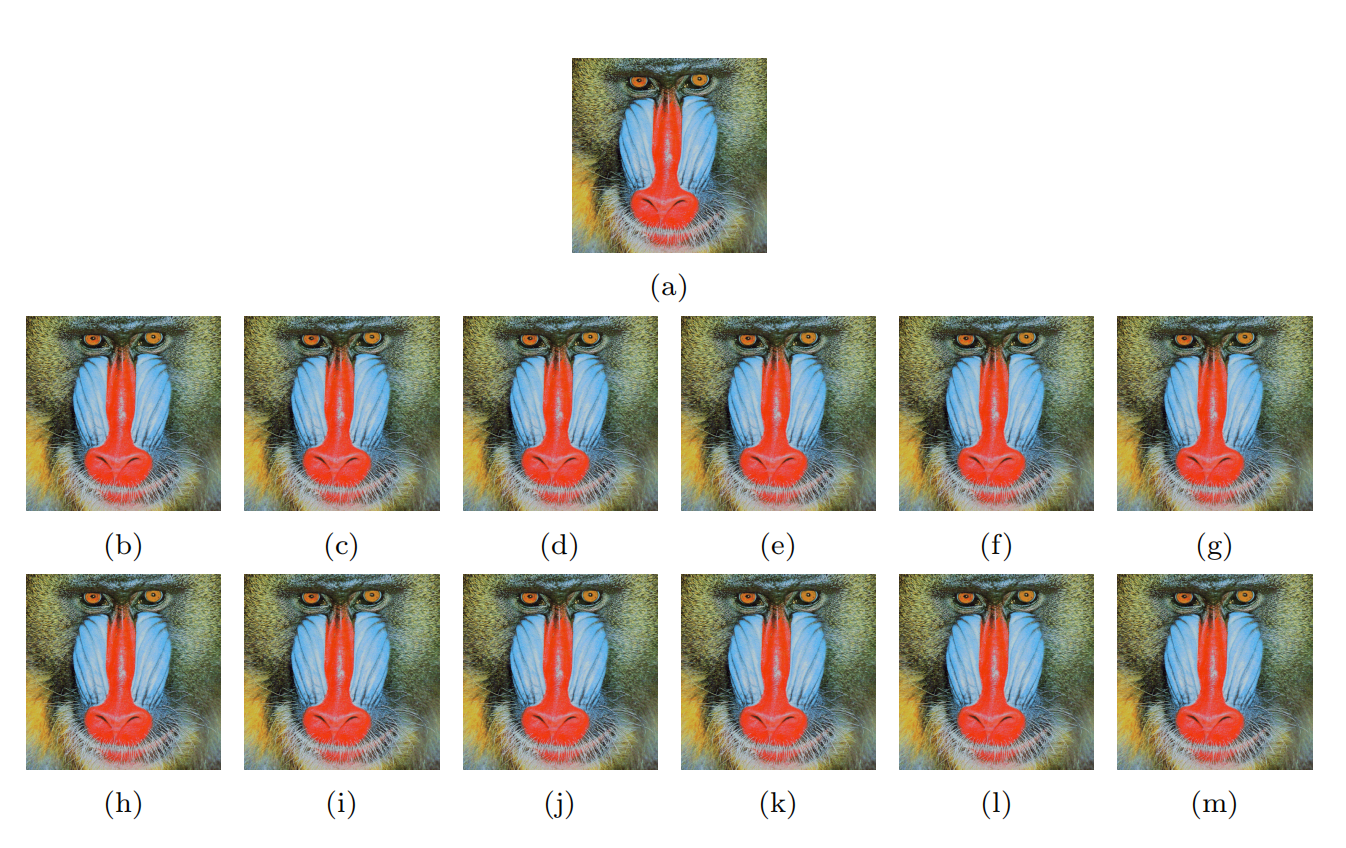}
  \caption{Low-rank compression and reconstruction of color image ``Baboon'' with $\tau = 0$ and  $\varepsilon = 0.001$ by
different methods. (a) Original. (b) Economy-sized SVD. (c) RSVD. (d) TSR-SVD. (e) CoR-UTV. (f) RP-TSOD. (g) PbP-QLP. (h) randUTV. (i) farPCA. (j) Adaptive PCA. (k) Adaptive RSVD. (l) FSVD. (m) Algorithm 2.}
  \label{example-figures1}
\end{figure}

\begin{figure}
    \centering
      \includegraphics[width=5.0in]{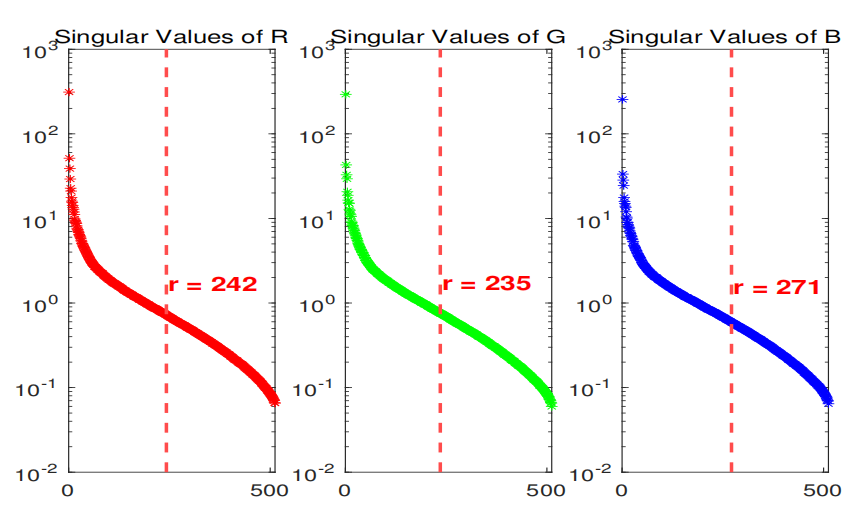}
  \caption{Singular values of the three color channels $R$, $G$ and $B$ of the color image ``Lighthouse''.  The vertical red dotted lines in the figure show the numerical rank calculated by Algorithm \ref{alg:A}.}\label{rgb_sv_kodim19}
\end{figure}

\begin{table}
\setlength\tabcolsep{3pt}
\renewcommand{\arraystretch}{1.25}  
	\centering \scriptsize
	\caption{Comparison of different methods for color image ``Lighthouse''  with dimension $768\times512$}\label{example-t2}
	\begin{tabular}{lccccccc}
		\hline
		\multirow{2}{*}{Algorithms}  & &  & Time (second)& & \multirow{2}{*}{RelErr} & \multirow{2}{*}{PSNR (dB)} & \multirow{2}{*}{SSIM}\\
		  \cline{3-5}
		& & $R_{\text{time}}$  / rank & $G_{\text{time}}$ / rank & $B_{\text{time}}$  / rank& & \\
		\hline
		\multirow{1}{*}{Economy-sized SVD}   &               & 56.22 	/	242	&	55.32 	/	235	&	56.49 	/	271	&	0.019407	&	40.600486	&	0.985618	\\
\cline{2-8}
		\multirow{3}{*}{RSVD \cite{hmt11}}        & $\tau=0$ & 19.02 	/	242	&	20.66 	/	235	&	23.68 	/	271	&	0.036228	&	35.201587	&	0.956165	\\
		& $\tau=1$ & 26.30 	/	247	&	28.79 	/	243	&	33.05 	/	280	&	0.020421	&	40.149823	&	0.984204	\\
		& $\tau=2$ & 34.14 	/	245	&	37.40 	/	244	&	42.95 	/	274	&	0.019438	&	40.600048	&	0.985430	\\
\cline{2-8}
		\multirow{3}{*}{TSR-SVD \cite{hmt11}}          & $\tau=0$ & 21.38 	/	242	&	23.12 	/	235	&	26.69 	/	271	&	0.036107	&	35.237366	&	0.956424	\\
		& $\tau=1$ & 36.39 	/	247	&	39.74 	/	243	&	46.21 	/	280	&	0.020348	&	40.181161	&	0.984292	\\
		& $\tau=2$ & 52.33 	/	245	&	57.12 	/	244	&	66.04 	/	274	&	0.019450	&	40.596928	&	0.985410	\\
\cline{2-8}
		\multirow{3}{*}{CoR-UTV \cite{kl18}}           & $\tau=0$ & \underline{17.06} 	/	242	&	 \underline{17.68} 	/	235	&	 \underline{21.98} 	/	271	&	0.036269	&	35.195809	&	0.955912	\\
		& $\tau=1$ & 24.56 	/	247	&	26.29 	/	243	&	31.44 	/	280	&	0.020393	&	40.161101	&	0.984227	\\
		& $\tau=2$ & 32.58 	/	245	&	34.80 	/	244	&	41.59 	/	274	&	0.019403	&	40.616214	&	0.985477	\\
\cline{2-8}
		\multirow{3}{*}{PbP-QLP \cite{kc20}}           & $\tau=0$ & 20.06 	/	242	&	21.52 	/	235	&	24.72 	/	271	&	0.036365	&	35.174096	&	0.957585	\\
		& $\tau=1$ & 27.65 	/	247	&	29.82 	/	243	&	34.41 	/	280	&	0.020397	&	40.155439	&	0.984330	\\
		& $\tau=2$ & 35.05 	/	245	&	38.44 	/	244	&	44.42 	/	274	&	0.019420	&	40.609231	&	0.985532	\\
\cline{2-8}
		\multirow{3}{*}{RP-TSOD \cite{kc21}}           & $\tau=0$ & 18.64 	/	242	&	20.02 	/	235	&	23.29 	/	271	&	0.036297	&	35.186467	&	0.957801	\\
		& $\tau=1$ & 26.28 	/	247	&	28.42 	/	243	&	32.97 	/	280	&	0.020391	&	40.160475	&	0.984388	\\
		& $\tau=2$ & 34.05 	/	245	&	36.89 	/	244	&	42.55 	/	274	&	0.019436	&	40.600323	&	0.985500	\\
\cline{2-8}
		\multirow{3}{*}{randUTV \cite{mqh2019}}           & $\tau=0$ & 82.58 	/	242	&	80.88 	/	235	&	80.74 	/	271	&	0.033128	&	35.992208	&	0.962736	\\
		& $\tau=1$ & 83.20 	/	247	&	83.47 	/	243	&	83.94 	/	280	&	0.019861	&	40.400429	&	0.985052	\\
		& $\tau=2$ & 87.10 	/	245	&	86.72 	/	244	&	87.22 	/	274	&	0.019444	&	40.583672	&	0.985574	\\
\cline{2-8}
		\multirow{3}{*}{farPCA \cite{fy2023}}            & $\tau=0$ & 20.13 	/	280	&	26.08 	/	280	&	44.18 	/	280	&	0.017381	&	40.386046	&	0.982038	\\
		& $\tau=1$ & 25.56 	/	280	&	25.97 	/	280	&	26.09 	/	280	&	0.017253	&	41.744839	&	0.987847	\\
		& $\tau=2$ & 31.46 	/	280	&	31.41 	/	280	&	31.37 	/	280	&	0.016211	&	42.285704	&	0.989065	\\
\cline{2-8}
		\multirow{3}{*}{Adaptive PCA \cite{dyxl2020}}      & $\tau=0$ & 24.53 	/	280	&	28.31 	/	280	&	42.15 	/	280	&	0.017421	&	40.374564	&	0.982141	\\
		& $\tau=1$ & 30.69 	/	280	&	30.66 	/	280	&	31.03 	/	280	&	0.017293	&	41.725847	&	0.987830	\\
		& $\tau=2$ & 34.62 	/	280	&	34.56 	/	280	&	34.49 	/	280	&	0.016212	&	42.285939	&	0.989056	\\
\cline{2-8}
		\multirow{3}{*}{Adaptive RSVD \cite{hmt11}}      & $\tau=0$ & 31.21 / 242 & 29.83 / 234 & 36.77 / 269 & 0.036500 & 35.141686 & 0.955918 \\
		& $\tau=1$ & 39.29 / 242 & 55.03 / 251 & 53.22 / 272 & 0.020585 & 40.115368 & 0.983733 \\
		& $\tau=2$ & 62.71 / 235 & 57.39 / 261 & 56.39 / 274 & 0.019135 & 40.707521 & 0.985338 \\
\cline{2-8}
		\multirow{3}{*}{FSVD \cite{nsr}}      & $\tau=0$ & 18.73 / 249 & 19.68 / 259 & 22.07 / 281 & 0.033482 & 35.894905 & 0.960981 \\
		& $\tau=1$ & 25.88 / 243 & 27.78 / 253 & 30.67 / 270 & 0.020527 & 40.155323 & 0.983889 \\
		& $\tau=2$ & 33.10 / 240 & 35.95 / 254 & 39.85 / 272 & 0.019290 & 40.670899 & 0.985369 \\
\cline{2-8}
		\multirow{3}{*}{Algorithm \ref{alg:A}} & $\tau=0$ & \textbf{15.88} 	/	242	&	\textbf{17.22} 	/	235	&	\textbf{19.97} 	/	271	&	0.036295	&	35.186443	&	0.956095	\\
		& $\tau=1$ & 23.21 	/	247	&	25.41	/	243	&	29.78 	/	280	&	0.020395	&	40.160224	&	0.984214	\\
		& $\tau=2$ & 30.98 	/	245	&	34.13 	/	244	&	39.65 	/	274	&	0.019421	&	40.608674	&	0.985465	\\
		\hline
	\end{tabular}
\end{table}

\begin{figure}[!ht]
   \centering
      \includegraphics[width=5.0in]{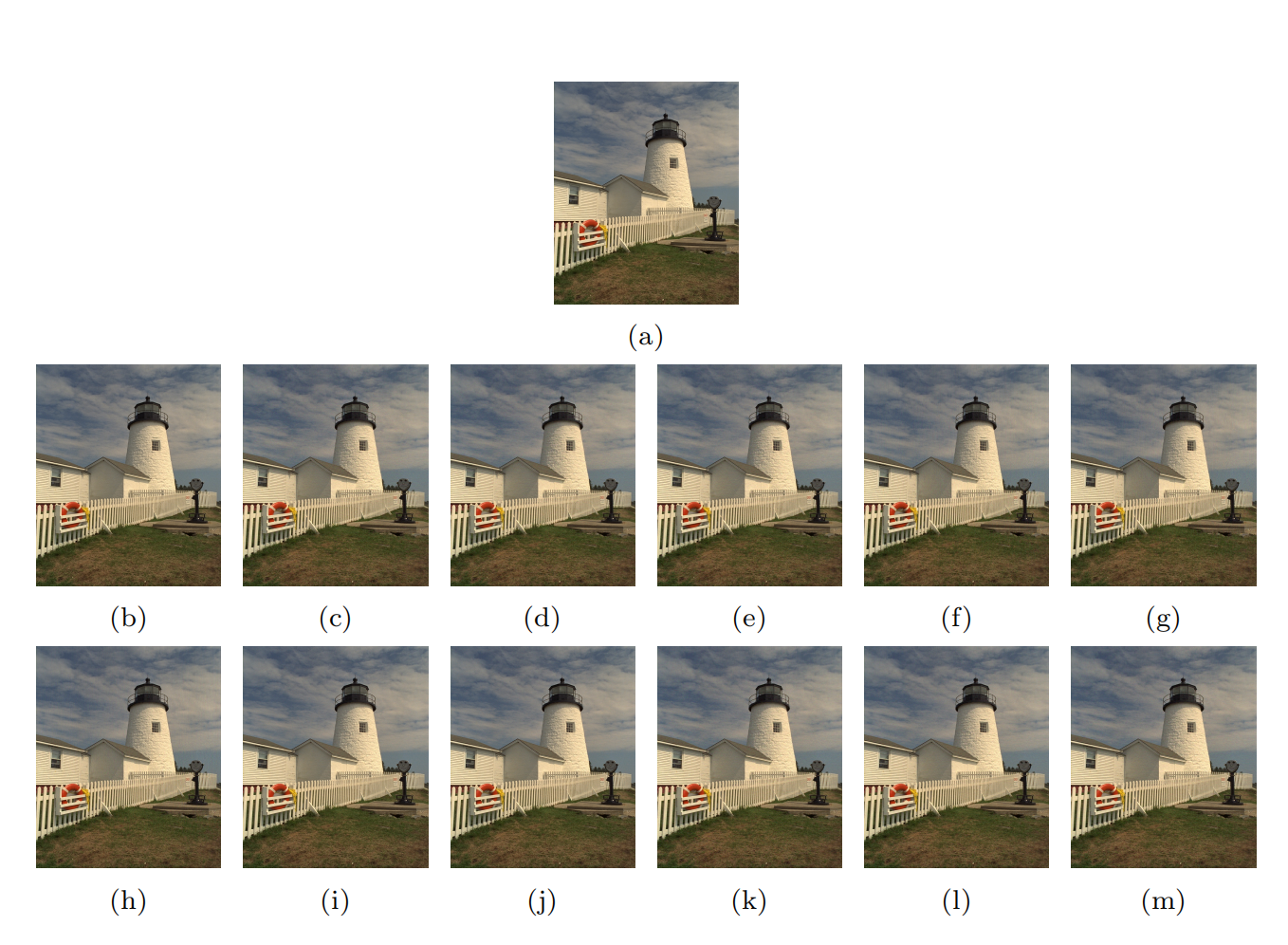}
  \caption{Low-rank compression and reconstruction of color image ``Lighthouse'' with $\tau = 0$ and $\varepsilon = 0.001$ by
different methods. (a) Original. (b) Economy-sized SVD. (c) RSVD. (d) TSR-SVD. (e) CoR-UTV. (f) RP-TSOD. (g) PbP-QLP. (h) randUTV. (i) farPCA. (j) Adaptive PCA. (k) Adaptive RSVD. (l) FSVD. (m) Algorithm 2.}
  \label{example-figures4}
\end{figure}

Let $A$ denote the original image. There are three standard criteria to evaluate the quality of the compressed image $\hat{A}$: 
\begin{enumerate}
\item  The peak signal-to-noise ratio value (PSNR), 
$${\rm PSNR}(\hat{A},A)=10\times\log_{10}\left(\frac{255^2n^2}{\|\hat{A}-A\|_F^2}\right).$$
\item  The structural similarity  index (SSIM), 
$${\rm SSIM}(\hat{A},A)=\frac{(4\mu_x\mu_y+c_1)(2\sigma_{xy}+c_2)}{(\mu_x^2+\mu_y^2+c_1)(\sigma_x^2+\sigma_y^2+c_2)},$$
where $x$ and $y$ are the vector forms of $\hat{A}$ and $A$,   $\mu_{x,y}$, $\sigma_{x,y}^2$ and $\sigma_{xy}$ respectively denote  their averages, variances and  covariances, and  $c_{1,2}$ are two constants.
  \item The relative error value (RelErr),
  \[\mathrm{RelErr}(\hat{A},A)=\frac{\|A-\hat{A}\|_{\rmF}}{\|A\|_{\rmF}}.\]
\end{enumerate}

The specific operation is as follows: During the experiment, we fix the $\varepsilon$ and compute the sampling parameter $d$ by Algorithm \ref{alg:A}. Such $d$ is substituted into the algorithms including from Economy-sized SVD to randUTV that require the matrix rank in advance. The compression results are shown in Figures \ref{example-figures1} and \ref{example-figures4}.
For each method, we compress these color images 1000 times and output the average result. 
Numerical results such as the calculation time and ranks  are shown in Tables \ref{example-t} and \ref{example-t2} with  the best values in bold and the second-best ones underlined.

From these numerical results, one can draw the following conclusions.
\begin{enumerate}
  \item  In Figures \ref{example-figures1} and \ref{example-figures4},  the low-rank approximations of all the compared methods recover the feature information of the original images.
      \item  In  Tables \ref{example-t} and \ref{example-t2},  the approximations computed by Algorithm  \ref{alg:A}  have almost the same PSNR and SSIM values with those by other algorithms. However, Algorithm \ref{alg:A}  costs less CPU time.  For the power factor $\tau=1$, the computational time of compressing images through Algorithm \ref{alg:A} is the shortest compared with other algorithms. 
      \item   Algorithm \ref{alg:A} can automatically reveal the correct numerical ranks (see Figures \ref{rgb_sv} and \ref{rgb_sv_kodim19}).  However,  the well-known RSVD \cite{hmt11}, TSR-SVD \cite{hmt11}, CoR-UTV \cite{kl18}, RP-TSOD \cite{kc20}, and PbP-QLP \cite{kc21} require prior knowledge of the matrix rank, and they cannot obtain strictly accurate compression when the rank is unknown.
  \item Since the rank obtained by farPCA and Adaptive PCA is always equal to an integer multiple of the block size, it is difficult to obtain the same rank as Algorithm \ref{alg:A}. Although they have a shorter calculation time when $\tau=2$, their relative error, PSNR and SSIM are not as good as Algorithm \ref{alg:A}.
For Adaptive RSVD and FSVD, they can adaptively obtain the approximate rank of the matrix, but they take a longer time than Algorithm \ref{alg:A}.
\end{enumerate}

\subsection{Dimensionality reduction for hyperspectral remote sensing}

In this experiment, we perform dimensionality reduction on hyperspectral image data and classify it using the k-nearest neighbors (KNN) algorithm with ten-fold cross-validation. We compare twelve dimensionality reduction algorithms: Economy-sized SVD, RSVD \cite{hmt11}, TSR-SVD \cite{hmt11}, CoR-UTV \cite{kl18}, RP-TSOD \cite{kc20}, PbP-QLP \cite{kc21}, randUTV \cite{mqh2019}, farPCA \cite{fy2023}, Adaptive PCA \cite{dyxl2020}, Adaptive RSVD \cite{hmt11}, FSVD \cite{nsr} and Algorithm \ref{alg:A}. 

The hyperspectral image data is derived from the Indian Pines dataset \cite{bbl2015}, which was captured by the Airborne Visible/Infrared Imaging Spectrometer (AVIRIS) in 1992, over a region of pine trees in Indiana, USA. A subset of the dataset with a spatial resolution of $145\times145$ pixels is selected and labeled for hyperspectral image classification (HSIC) purposes.
The dataset contains 220 spectral bands, but due to water absorption, bands 104-108, 150-163, and band 220 are excluded from the analysis. As a result, we use the remaining 200 bands for our experiments. 
We first reshape each spectral image of size $145\times145$ into a column vector of size $21025\times1$, and then concatenate all 200 column vectors to construct a data matrix of size $21025\times200$. Dimensionality reduction is then applied to this matrix using low-rank approximation techniques.
The ground truth available is designated into sixteen classes and is not all mutually exclusive. The dataset is publicly available and can be accessed at \url{https://purr.purdue.edu/publications/1947/1}.
Figure \ref{Indian_pines} shows the image cube of the Indian Pines Test Site data, where the three-dimensional structure of the hyperspectral cube highlights the spatial and spectral information contained within the dataset. Each slice of the cube represents a different spectral band, providing insight into the variations across wavelengths.

\begin{figure}[!t]
   \centering
      \includegraphics[width=5.0in]{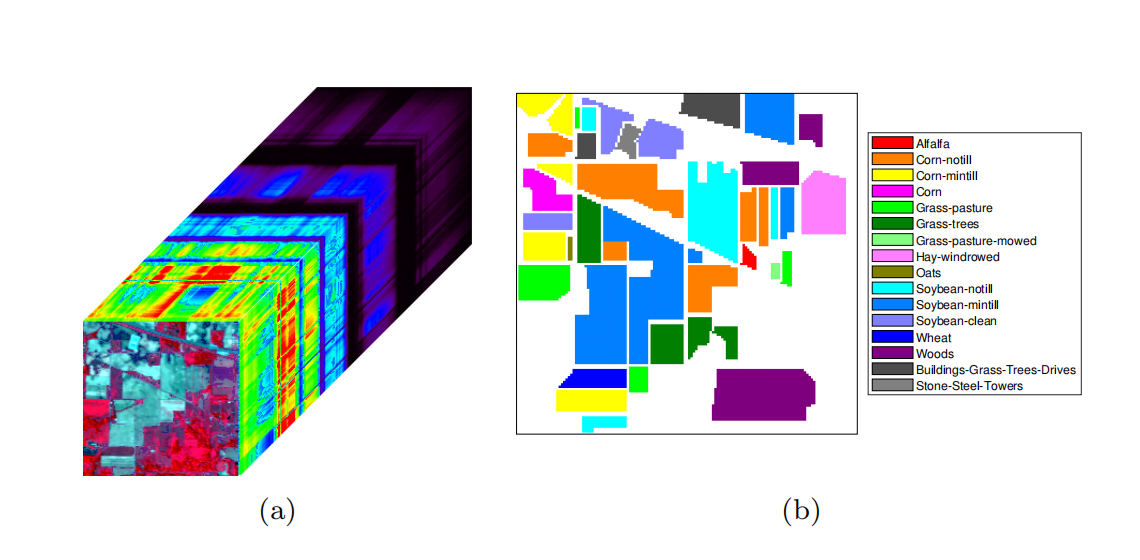}
  \caption{The Indian Pine Test Site data. (a) Image cube. (b) Hyperspectral image classification.}
  \label{Indian_pines}
\end{figure}

\begin{table}
\setlength\tabcolsep{3pt}
\renewcommand{\arraystretch}{1.25}  
  \centering \scriptsize
  \caption{Overall accuracy performance and time of different dimensionality reduction methods for the Indian Pine Test Site data. (The average of ten times, $\tau = 1$)}\label{table_Indian}
  \begin{tabular}{ccccccccc}
  \hline
  \multirow{2}{*}{Algorithms} & \multicolumn{2}{c}{$\varepsilon=0.1$, $N=3$} & \multicolumn{2}{c}{$\varepsilon=0.03$, $N=5$} & \multicolumn{2}{c}{$\varepsilon=0.02$, $N=9$} & \multicolumn{2}{c}{$\varepsilon=0.01$, $N=13$}\\
  \cline{2-9}
   & OA(\%) & Time(s) & OA(\%) & Time(s) & OA(\%) & Time(s) & OA(\%) & Time(s)\\
   \hline
  Economy-sized SVD              &	47.74 	&	0.0336 	&	64.24 	&	0.0311 	&	79.27 	&	0.0334 	&	93.90 	&	0.0354 	\\
RSVD \cite{hmt11}        &	50.89 	&	0.0055 	&	62.43 	&	\underline{0.0048} 	&	78.03 	&	0.0062 	&	95.72 	&	\underline{0.0077} 	\\
TSR-SVD \cite{hmt11}           &	49.43 	&	0.0062 	&	62.66 	&	0.0062 	&	77.82 	&	0.0079 	&	95.24 	&	0.0130 	\\
CoR-UTV \cite{kl18}            &	48.88 	&	0.0055 	&	63.22 	&	0.0061 	&	80.54 	&	0.0062 	&	95.57 	&	0.0094 	\\
RP-TSOD \cite{kc20}            &	52.96 	&	\underline{0.0048} 	&	62.34 	&	0.0060 	&	80.27 	&	0.0069 	&	95.49 	&	0.0109 	\\
PbP-QLP \cite{kc21}            &	49.77 	&	0.0054 	&	63.10 	&	0.0060 	&	80.18 	&	0.0063 	&	95.35 	&	0.0091 	\\
randUTV \cite{mqh2019}         &	55.78 	&	0.0124 	&	69.20 	&	0.0109 	&	88.78 	&	0.0143 	&	96.42 	&	0.0155 	\\
farPCA \cite{fy2023}           &	49.39 	&	0.0049 	&	63.02 	&	0.0050 	&	79.30 	&	\underline{0.0053} 	&	95.53 	&	0.0098 	\\
Adaptive PCA \cite{dyxl2020}   &	50.30 	&	0.0054 	&	62.58 	&	0.0061 	&	79.10 	&	0.0063 	&	95.01 	&	0.0114 	\\
Adaptive RSVD \cite{hmt11}     &	51.24 	&	0.0147 	&	66.97 	&	0.0140 	&	81.85 	&	0.0155 	&	94.20 	&	0.0207 	\\
FSVD \cite{nsr} &	 52.39 	&	0.0050  	&	71.21  	&	0.0053 	&	88.31  	&	0.0064  	&   95.81 	&	0.0099  	\\
Algorithm \ref{alg:A}          &	52.95 	&	\textbf{0.0033} 	&	71.41 	&	\textbf{0.0039} 	&	85.43 	&	\textbf{0.0042} 	&	95.41 	&	\textbf{0.0074} 	\\
  \hline
\end{tabular}
\end{table}

The numerical results, including the number of features ($N$), the overall accuracy (OA) and computational time, are presented in Table \ref{table_Indian} with  the best values in bold and  the second-best ones underlined.  We can observe significant performance variations across the compared  dimensionality reduction methods on the Indian Pine Test Site data. The OA is calculated using the formula:
\[OA=\frac{\sum_{i=1}^{C}n_{ii}}{\sum_{i=1}^{C}\sum_{j=1}^{C}n_{ij}},\]
where $C$ is the number of classes and $n_{ij}$ represents the number of samples of the $j$-th category classified into the $i$-th category.

In terms of overall accuracy, Algorithm \ref{alg:A} consistently outperforms most of the other techniques across various values of the parameter $\varepsilon$. Algorithm \ref{alg:A} shows a competitive balance between accuracy and computational efficiency. In particular, it exhibits the fastest processing time across all $\varepsilon$ values, making it highly efficient. For instance, at $\varepsilon=0.03$, the computational time for Algorithm \ref{alg:A} is 0.0039 seconds, which is the lowest among all methods, while still maintaining a relatively high accuracy.

Comparatively, methods such as Economy-sized SVD and RSVD also show strong performance in terms of accuracy, but their computational costs are noticeably higher than Algorithm \ref{alg:A}. Other methods like TSR-SVD and CoR-UTV offer reasonable accuracy but require significantly more time to compute, particularly as the number of features ($N$) increases.

\subsection{Robustness of Algorithm \ref{alg:A}} In this experiment, we use four classes of matrices from \cite{kc21} to illustrate the suitability and robustness of Algorithm \ref{alg:A}.  The first two classes contain one or multiple gaps in the spectrum and are particularly designed to investigate the rank-revealing property of Algorithm \ref{alg:A}. The second two classes have fast and slow decay singular values.  For the sake of simplicity, we focus on square matrices by setting the order $n = 1000$.
\begin{itemize}
  \item Matrix I (Low-rank plus noise). A rank-$r$ matrix with $r=20$ is formed as follows:
  \begin{equation}\label{matrix-1}
    A = U\Sigma V^{\rmT}+\alpha\sigma_{r} E,
  \end{equation}
  where $U\in \mathbb{R}^{n\times n}$ and $V \in \mathbb{R}^{n\times n}$ are orthogonal matrices,  $\Sigma = \mathrm{diag}\{\sigma_{1},$ $\cdots,$ $\sigma_{n}\}$ $ \in \mathbb{R}^{n\times n}$ is a diagonal matrix,   $\sigma_1, \ldots, \sigma_r$  decrease
linearly from 1 to $10^{-25}$,  $\sigma_{r+1}=\cdots=\sigma_{n}=0$, and $E\in\mathbb{R}^{n\times n}$ is a standard Gaussian matrix. 
  \begin{description}
    \item (i) When $\alpha=0.005$, $A$ has a gap $\approx 200$.
    \item (ii) When $\alpha=0.02$,  $A$ has a gap $\approx 50$.
  \end{description}
  \item Matrix II (The devil's stairs). This challenging matrix has multiple gaps in its spectrum. The singular values are arranged analogous to a descending staircase with each step consisting of 15 equal singular values. The singular values decay in steps, and are given by
\[\sigma_i = 10^{-\frac{4}{5} \cdot \left\lfloor \frac{i-1}{15} \right\rfloor}, \quad i=1,2,\cdots, n.\]
  \item Matrix III (Fast decay). This matrix is formed as follows:
  \begin{equation}\label{matrix-3}
    A = U\Sigma V^{\rmT},
  \end{equation}
  where the diagonal elements of $\Sigma$ have the form $\sigma_{i}=e^{-i/6}$, for $i=1,...,n$.
  \item Matrix IV (Slow decay). This matrix is also formed as Matrix III, but the diagonal elements of $\Sigma$ take the form $\sigma_{i}=i^{-2}$, for $i=1,...,n$.
\end{itemize}

\begin{figure}[!ht]
   \centering
      \includegraphics[width=5.0in]{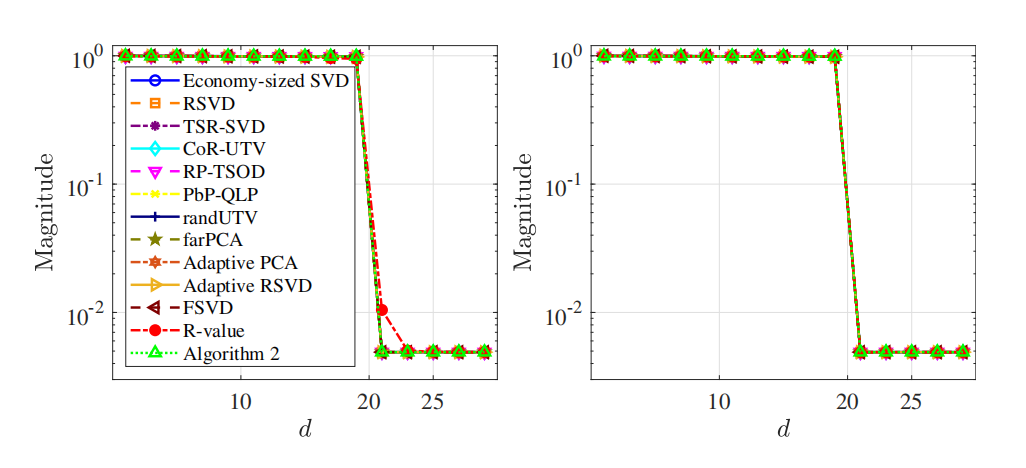}
  \caption{Singular value approximations for Matrix I with $\alpha=0.005$. Left: $\tau=0$. Right: $\tau=2$.}\label{plus_noiseLargeGap}
\end{figure}
\begin{figure}[!ht]
    \centering
      \includegraphics[width=5.0in]{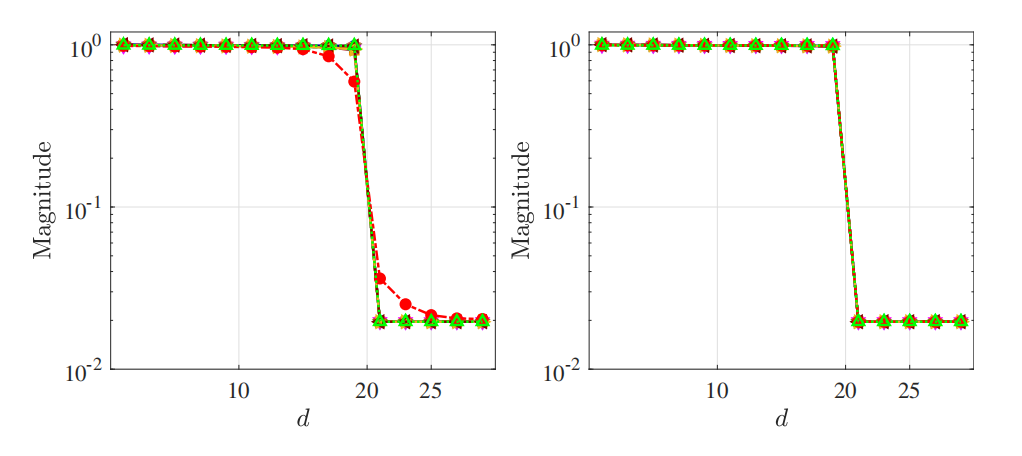}
  \caption{Singular value approximations for Matrix I with $\alpha=0.02$. Left: $\tau=0$. Right: $\tau=2$.}\label{plus_noiseSmallGap}
\end{figure}
\begin{figure}[!ht]
   \centering
      \includegraphics[width=5.0in]{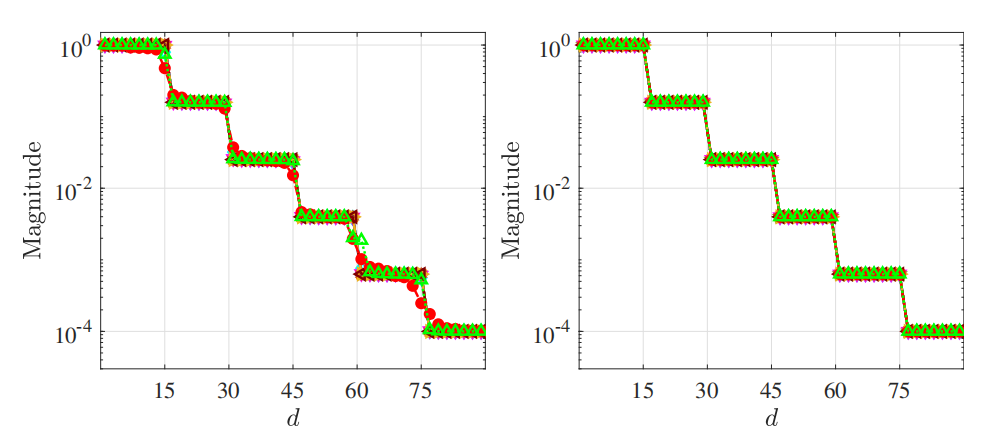}
  \caption{Singular value approximations for Matrix II. Left: $\tau=0$. Right: $\tau=2$.}\label{stairs}
\end{figure}
\begin{figure}[!ht]
    \centering
      \includegraphics[width=5.0in]{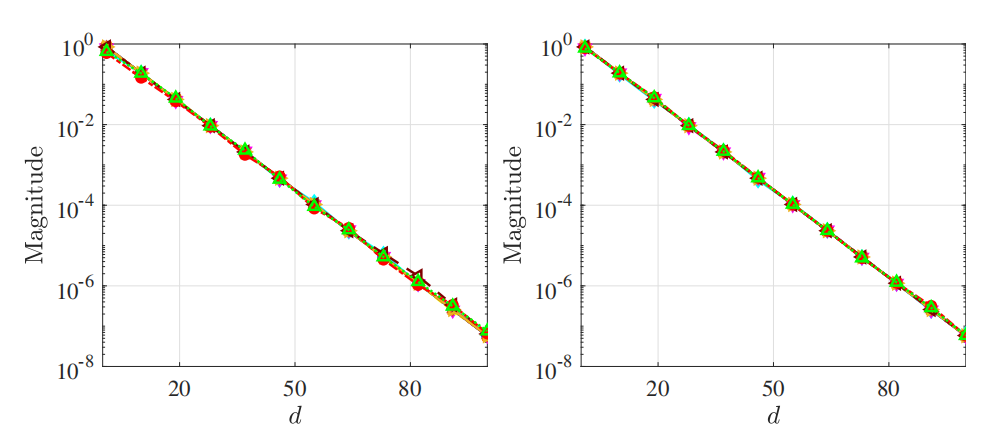}
  \caption{Singular value approximations for Matrix III. Left: $\tau=0$. Right: $\tau=2$.}\label{fast_decay}
\end{figure}
\begin{figure}[!ht]
   \centering
      \includegraphics[width=5.0in]{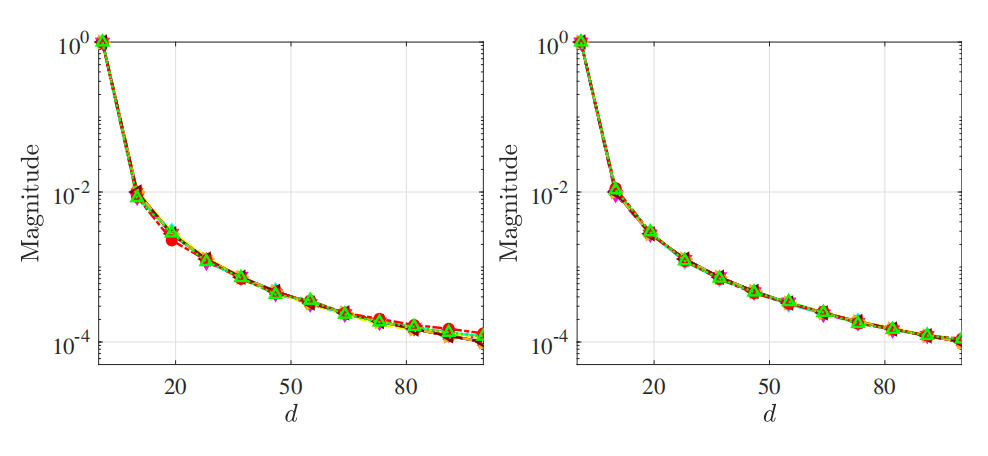}
  \caption{Singular value approximations for Matrix IV. Left: $\tau=0$. Right: $\tau=2$.}\label{slow_decay}
\end{figure}

The results for singular values estimation are plotted in Figures \ref{plus_noiseLargeGap}-\ref{slow_decay}. We make several observations:
\begin{enumerate}
  \item The numerical ranks of Matrix I with $\alpha=0.005$ and Matrix I with $\alpha=0.02$ are strongly revealed in $\Sigma$ generated by Algorithm \ref{alg:A} with $\tau=0$. This is due to the fact that the gaps in the spectra of these matrices are well-defined. The diagonal elements of $\Sigma$ generated by Algorithm \ref{alg:A} with subspace iteration technique are as accurate as those of the optimal SVD. Figures \ref{plus_noiseLargeGap} and \ref{plus_noiseSmallGap} show that Algorithm \ref{alg:A} is a rank-revealer.
  \item For Matrix II, the R-values of Algorithm \ref{alg:A} with $\tau=0$ do not clearly disclose the gaps in the matrix's spectrum. This is because the gaps are not substantial. Fortunately, Algorithm \ref{alg:A} with $\tau=0$ strongly reveals the gaps, which shows that the procedure leading to the formation of the upper triangular matrix $R$ provides a good first step for Algorithm \ref{alg:A}. The R-values of Algorithm \ref{alg:A} with subspace iteration clearly disclose the gaps.
  \item For Matrix III and Matrix IV, Algorithm \ref{alg:A} provides highly accurate singular values, showing similar performance as Economy-sized SVD, RSVD in \cite{hmt11}, TSR-SVD in \cite{hmt11}, CoR-UTV in \cite{kl18}, RP-TSOD in \cite{kc20} and PbP-QLP in \cite{kc21}.
\end{enumerate}

The results presented in Figures \ref{plus_noiseLargeGap}-\ref{slow_decay} demonstrate the applicability of Algorithm \ref{alg:A} in accurately estimating the singular values of matrices of different classes.

\subsection{Sensitivity to the truncation tolerance \(\varepsilon\)}
In this part, we use Matrix II: The devil's stairs from Section 4.3, which has size 90, to illustrate how the estimated rank and relative error vary with the truncation tolerance. The results are shown in Figure \ref{effect}.

\begin{figure}[!ht]
  \centering
      \includegraphics[width=5.0in]{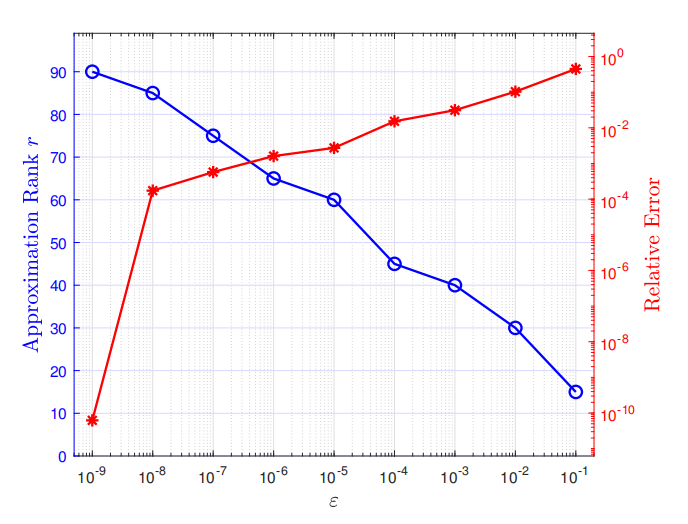}
  \caption{Effect of $\varepsilon$ on approximation rank and relative error.}\label{effect}
\end{figure}

Based on the truncated experiment results shown in the figure, the influence of the selection strategy for $\varepsilon$ on the approximation rank and the relative error can be clearly described:
As the tolerance $\varepsilon$ increases, the approximation rank $r$ decreases monotonically, while the relative error increases monotonically. Specifically:
\begin{itemize}
  \item When $\varepsilon$ is small (e.g., $10^{-9}$), the algorithm retains more principal components, resulting in a higher approximation rank $r$ and high approximation accuracy, with the relative error reaching as low as the $10^{-10}$ level.
  \item As the tolerance $\varepsilon$ increases from $10^{-9}$ to $10^{-8}$, the estimated rank decreases from 90 to 85. Since the true rank is exactly 90, this indicates that 5 nonzero singular values are discarded. Consequently, the approximation incurs a noticeable loss, with the relative error roughly on the order of $\sqrt{5} \times 10^{-4}$.
  \item Further increasing $\varepsilon$ (e.g., up to $10^{-1}$) leads to more aggressive truncation, retaining only a few principal components. This significantly reduces the approximation rank $r$ and improves computational efficiency.
\end{itemize}
This illustrates the relationship between the tolerance $\varepsilon$, the approximation rank $r$, and the relative error. By adjusting $\varepsilon$, one can control how many singular values are retained and thus select an appropriate trade-off tailored to specific accuracy and efficiency requirements. In practical batch-data applications, it is often effective to start with a relatively large $\varepsilon$ and gradually decrease it based on accuracy needs, thereby identifying an optimal balance point.

\subsection{Low-rank approximation}
We compare the speed and accuracy of Algorithm \ref{alg:A} with the existing state-of-the-art algorithms, in factoring input matrices with various dimensions.
We construct a new class of strictly rank-deficient matrices
$A \in \mathbb{R}^{n\times n}$ with rank $r$ of the form:
\begin{eqnarray}\label{mat2}
A = U\left(
         \begin{array}{cc}
           \Sigma & O_{r\times (n-r)} \\
           O_{(n-r)\times r} & O_{(n-r)\times (n-r)} \\
         \end{array}
       \right)
 V^{\rmT},
\end{eqnarray}
where $U\in \mathbb{R}^{n\times n}$ and $V \in \mathbb{R}^{n\times n}$ are orthogonal matrices, and $\Sigma=\mathrm{diag}\{\sigma_{1},\ldots,\sigma_{r}\} \in \mathbb{R}^{r\times r}$ is a diagonal matrix, $\sigma_{1}\geq\sigma_{2}\geq\cdots\geq\sigma_{r}>0$ are the $r$ non-zero singular values of $A$.
We generate the exact $\Sigma=\mathrm{diag}\{\sigma_{1},\ldots,\sigma_{r}\}$ by using the built-in function \texttt{rand(1,r)}
in MATLAB such that
\[\sigma_{1}\geq\sigma_{2}\geq\cdots\geq\sigma_{r}>0\]
and the orthogonal matrices $U$ and $V$ are generated by code \texttt{orth(randn(n,n))} in MATLAB.

Now we take $A$ generated by \eqref{mat2} as testing matrices in two cases.
First, we consider that when the rank of the matrix is unknown, the sampling size parameter $d$ may have the following three cases: 
$d = r$,
$d > r$ and
$d < r$.
To evaluate these scenarios, we used square matrices of various sizes, setting the matrix rank at $r = 0.4n$.
Sampling size parameters of $d = 0.4n$, $d = 0.6n$, and $d = 0.35n$ are tested.
The computation time and accuracy of Algorithm \ref{alg:A} are compared with the current state-of-the-art algorithms in the literature.  The results are presented in Tables \ref{0.4n}-\ref{0.35n}.

\begin{table}
\setlength\tabcolsep{3pt}
\renewcommand{\arraystretch}{1.25}  
  \centering\scriptsize
  \caption{Numerical results of low-rank approximation with different methods for matrix \eqref{mat2} ($r=0.4n$, $d=0.4n$)}\label{0.4n}
  \begin{tabular}{lccccccc}
    \hline
    \multirow{2}{*}{Algorithms}  & & \multicolumn{2}{c}{$n=4000$} & \multicolumn{2}{c}{$n=8000$} & \multicolumn{2}{c}{$n=12000$} \\
    \cline{3-8}
     & & Time(s) & RelErr& Time(s) & RelErr& Time(s) & RelErr  \\
    \hline
    \multirow{1}{*}{Economy-sized SVD} &  & 10.2 & 4.9E-15 & 73.2 & 6.2E-15& 244.8 & 7.0E-15\\
    \cline{2-8}
    \multirow{3}{*}{RSVD \cite{hmt11}}	&	$\tau=0$	&	1.8 	&	2.7E-14	&	18.6 	&	6.6E-14	&	62.5 	&	8.2E-14	\\
	                                &	$\tau=1$	&	2.7 	&	4.0E-15	&	25.2 	&	4.9E-15	&	85.7 	&	5.5E-15	\\
	                                &	$\tau=2$	&	3.6 	&	4.1E-15	&	33.0 	&	5.0E-15	&	109.8 	&	5.8E-15	\\
\cline{2-8}
\multirow{3}{*}{TSR-SVD \cite{hmt11}}	&	$\tau=0$	&	2.0 	&	1.4E-13	&	20.4 	&	2.5E-13	&	56.0 	&	4.3E-14	\\
	                            &	$\tau=1$	&	4.0 	&	4.1E-15	&	35.4 	&	4.9E-15	&	102.9 	&	8.1E-15	\\
	                            &	$\tau=2$	&	5.8 	&	4.1E-15	&	50.3 	&	5.0E-15	&	149.9 	&	8.0E-15	\\
\cline{2-8}
\multirow{3}{*}{CoR-UTV \cite{kl18}}	&	$\tau=0$	&	1.7 	&	9.2E-13	&	14.0 	&	1.1E-11	&	45.5 	&	1.1E-11	\\
	                            &	$\tau=1$	&	2.7 	&	1.8E-15	&	21.2 	&	2.1E-15	&	68.8 	&	2.3E-15	\\
	                            &	$\tau=2$	&	3.6 	&	1.8E-15	&	28.9 	&	2.0E-15	&	92.0 	&	2.3E-15	\\
\cline{2-8}
\multirow{3}{*}{RP-TSOD \cite{kc20}}	&	$\tau=0$	&	2.4 	&	3.8E-14	&	21.2 	&	1.2E-13	&	71.0 	&	9.1E-14	\\
	                            &	$\tau=1$	&	3.4 	&	1.7E-15	&	28.4 	&	2.1E-15	&	93.8 	&	2.3E-15	\\
	                            &	$\tau=2$	&	4.3 	&	1.6E-15	&	35.9 	&	1.9E-15	&	117.2 	&	2.2E-15	\\
\cline{2-8}
\multirow{3}{*}{PbP-QLP \cite{kc21}}	&	$\tau=0$	&	1.7 	&	4.7E-14	&	18.5 	&	7.7E-14	&	64.2 	&	7.3E-14	\\
	                            &	$\tau=1$	&	2.7 	&	1.4E-15	&	26.0 	&	1.4E-15	&	90.7 	&	1.4E-15	\\
	                            &	$\tau=2$	&	3.6 	&	1.3E-15	&	33.1 	&	1.4E-15	&	113.6 	&	1.4E-15	\\
\cline{2-8}
\multirow{3}{*}{randUTV \cite{mqh2019}}	&	$\tau=0$	&	7.0 	&	3.6E-15	&	52.1 	&	4.1E-15	&	210.7 	&	4.1E-15	\\
	                            &	$\tau=1$	&	7.8 	&	3.7E-15	&	56.2 	&	3.9E-15	&	213.5 	&	4.0E-15	\\
	                            &	$\tau=2$	&	8.1 	&	3.7E-15	&	60.3 	&	3.9E-15	&	217.6 	&	4.1E-15	\\
\cline{2-8}
\multirow{3}{*}{farPCA \cite{fy2023}}	    &	$\tau=0$	&	4.9 	&	3.4E-11	&	43.7 	&	1.1E-10	&	89.9 	&	3.0E-10	\\
	                            &	$\tau=1$	&	8.1 	&	1.8E-13	&	64.9 	&	4.9E-11	&	221.0 	&	2.9E-11	\\
	                            &	$\tau=2$	&	10.5 	&	1.8E-12	&	76.7 	&	3.0E-11	&	273.3 	&	1.8E-11	\\
\cline{2-8}
\multirow{3}{*}{Adaptive PCA \cite{dyxl2020}}	    &	$\tau=0$	&	2.8 	&	5.6E-14	&	22.6 	&	9.2E-14	&	52.9 	&	2.1E-13	\\
	                            &	$\tau=1$	&	3.8 	&	1.5E-14	&	30.4 	&	4.2E-14	&	72.9 	&	3.1E-13	\\
	                            &	$\tau=2$	&	4.5 	&	6.3E-14	&	36.2 	&	1.6E-13	&	88.5 	&	5.5E-13	\\
\cline{2-8}
\multirow{3}{*}{Adaptive RSVD \cite{hmt11}}	    &	$\tau=0$	&	5.5 	&	6.2E-14	&	56.3 	&	1.8E-13	&	188.6 	&	2.5E-13	\\
	                            &	$\tau=1$	&	6.5 	&	4.2E-15	&	64.2 	&	4.9E-15	&	212.2 	&	8.0E-15	\\
	                            &	$\tau=2$	&	7.5 	&	4.1E-15	&	71.8 	&	8.0E-15	&	236.1 	&	8.0E-15	\\
\cline{2-8}
\multirow{3}{*}{FSVD \cite{nsr}}	    &	$\tau=0$	&	\underline{1.6} 	&	9.2E-12	&	\underline{13.4} 	&	2.8E-11	&	\underline{43.9} 	&	4.3E-11	\\
	                            &	$\tau=1$	&	2.5 	&	2.8E-15	&	20.6 	&	3.3E-15	&	67.6 	&	3.7E-15	\\
	                            &	$\tau=2$	&	3.5 	&	2.8E-15	&	27.6 	&	3.3E-15	&	91.4 	&	3.6E-15	\\
\cline{2-8}
\multirow{3}{*}{Algorithm \ref{alg:A}}	&	$\tau=0$	&	\textbf{1.5} 	&	3.1E-13	&	\textbf{10.6} 	&	1.1E-12	&	\textbf{29.3} 	&	9.4E-12	\\
	                                        &	$\tau=1$	&	2.3 	&	1.3E-15	&	18.0 	&	1.3E-15	&	52.5 	&	1.3E-15	\\
	                                        &	$\tau=2$	&	3.3 	&	1.2E-15	&	25.6 	&	1.3E-15	&	75.5 	&	1.3E-15	\\
    \hline
  \end{tabular}
\end{table}

\begin{table}
\setlength\tabcolsep{3pt}
\renewcommand{\arraystretch}{1.25}  
  \centering\scriptsize
  \caption{Numerical results of low-rank approximation with different methods for matrix \eqref{mat2} ($r=0.4n$, $d=0.6n$)}\label{0.6n}
  \begin{tabular}{lcrrrrrr}
    \hline
    \multirow{2}{*}{Algorithms}  & & \multicolumn{2}{c}{$n=4000$} & \multicolumn{2}{c}{$n=8000$} & \multicolumn{2}{c}{$n=12000$} \\
    \cline{3-8}
     & & Time(s) & RelErr& Time(s) & RelErr& Time(s) & RelErr  \\
    \hline
    \multirow{1}{*}{Economy-sized SVD} &  & 10.2 & 4.9E-15 & 77.3 & 6.1E-15& 244.2 & 6.9E-15\\
\cline{2-8}
    \multirow{3}{*}{RSVD \cite{hmt11}}	&	$\tau=0$	&	4.9 	&	4.0E-15	&	48.7 	&	4.8E-15	&	162.8 	&	5.8E-15	\\
	                                &	$\tau=1$	&	6.4 	&	4.3E-15	&	61.3 	&	5.2E-15	&	202.9 	&	6.2E-15	\\
	                                &	$\tau=2$	&	8.2 	&	4.2E-15	&	74.2 	&	5.2E-15	&	243.2 	&	6.2E-15	\\
\cline{2-8}
\multirow{3}{*}{TSR-SVD \cite{hmt11}}	&	$\tau=0$	&	5.6 	&	4.1E-15	&	38.2 	&	6.7E-15	&	115.7 	&	7.6E-15	\\
	                            &	$\tau=1$	&	8.6 	&	4.3E-15	&	64.1 	&	6.7E-15	&	193.8 	&	7.7E-15	\\
	                            &	$\tau=2$	&	12.2 	&	4.5E-15	&	88.8 	&	6.9E-15	&	272.4 	&	7.7E-15	\\
\cline{2-8}
\multirow{3}{*}{CoR-UTV \cite{kl18}}	&	$\tau=0$	&	3.7 	&	5.1E-13	&	30.3 	&	5.3E-14	&	98.6 	&	1.7E-13	\\
	                            &	$\tau=1$	&	5.4 	&	1.8E-15	&	43.0 	&	2.1E-15	&	138.4 	&	2.4E-15	\\
	                            &	$\tau=2$	&	6.9 	&	1.8E-15	&	55.8 	&	2.1E-15	&	182.5 	&	2.3E-15	\\
\cline{2-8}
\multirow{3}{*}{RP-TSOD \cite{kc20}}	&	$\tau=0$	&	5.4 	&	1.9E-15	&	46.5 	&	2.2E-15	&	157.9 	&	2.4E-15	\\
	                            &	$\tau=1$	&	7.0 	&	1.7E-15	&	59.1 	&	2.1E-15	&	195.9 	&	2.4E-15	\\
	                            &	$\tau=2$	&	8.6 	&	1.7E-15	&	71.3 	&	2.0E-15	&	233.1 	&	2.2E-15	\\
\cline{2-8}
\multirow{3}{*}{PbP-QLP \cite{kc21}}	&	$\tau=0$	&	4.5 	&	1.6E-15	&	46.4 	&	1.6E-15	&	150.9 	&	1.6E-15	\\
	                            &	$\tau=1$	&	5.4 	&	1.4E-15	&	53.7 	&	1.4E-15	&	173.5 	&	1.4E-15	\\
	                            &	$\tau=2$	&	6.4 	&	1.4E-15	&	66.5 	&	1.4E-15	&	195.7 	&	1.4E-15	\\
\cline{2-8}
\multirow{3}{*}{randUTV \cite{mqh2019}}	&	$\tau=0$	&	7.2 	&	3.6E-15	&	52.3 	&	4.0E-15	&	211.1 	&	4.2E-15	\\
	                            &	$\tau=1$	&	7.7 	&	3.7E-15	&	56.8 	&	3.8E-15	&	216.1 	&	4.0E-15	\\
	                            &	$\tau=2$	&	8.4 	&	3.7E-15	&	60.2 	&	3.8E-15	&	219.6 	&	4.0E-15	\\
\cline{2-8}
\multirow{3}{*}{farPCA \cite{fy2023}}	    &	$\tau=0$	&	4.8 	&	7.7E-10	&	40.5 	&	2.9E-10	&	138.2 	&	5.5E-09	\\
	                            &	$\tau=1$	&	8.2 	&	1.2E-11	&	62.4 	&	3.2E-12	&	217.9 	&	2.2E-10	\\
	                            &	$\tau=2$	&	12.1 	&	1.6E-11	&	74.1 	&	4.4E-12	&	164.1 	&	1.7E-12	\\
\cline{2-8}
\multirow{3}{*}{Adaptive PCA \cite{dyxl2020}}	    &	$\tau=0$	&	3.1 	&	2.8E-14	&	22.1 	&	1.1E-13	&	52.2 	&	6.1E-14	\\
	                            &	$\tau=1$	&	4.3 	&	5.2E-14	&	29.7 	&	6.2E-13	&	70.8 	&	9.4E-14	\\
	                            &	$\tau=2$	&	5.3 	&	2.9E-12	&	35.4 	&	5.1E-14	&	86.9 	&	7.6E-13	\\
\cline{2-8}
\multirow{3}{*}{Adaptive RSVD \cite{hmt11}}	    &	$\tau=0$	&	5.5 	&	1.3E-13	&	56.4 	&	6.6E-14	&	187.9 	&	1.9E-13	\\
	                            &	$\tau=1$	&	6.5 	&	4.0E-15	&	64.2 	&	4.9E-15	&	217.2 	&	7.9E-15	\\
	                            &	$\tau=2$	&	7.4 	&	4.2E-15	&	71.7 	&	5.0E-15	&	234.2 	&	8.0E-15	\\
\cline{2-8}
\multirow{3}{*}{FSVD \cite{nsr}}	    &	$\tau=0$	&	\underline{1.8} 	&	1.4E-12	&	\underline{14.2} 	&	3.9E-11	&	\underline{45.9} 	&	3.6E-11	\\
	                            &	$\tau=1$	&	2.8 	&	2.8E-15	&	21.6 	&	3.2E-15	&	69.6 	&	3.6E-15	\\
	                            &	$\tau=2$	&	3.7 	&	2.8E-15	&	29.2 	&	3.3E-15	&	93.4 	&	3.7E-15	\\
\cline{2-8}
\multirow{3}{*}{Algorithm \ref{alg:A}}	&	$\tau=0$	&	\textbf{1.5} 	&	2.5E-13	&	\textbf{10.5} 	&	8.2E-12	&	\textbf{28.5} 	&	7.4E-12	\\
	                                        &	$\tau=1$	&	2.4 	&	1.3E-15	&	17.8 	&	1.3E-15	&	51.1	&	1.3E-15	\\
	                                        &	$\tau=2$	&	3.2 	&	1.2E-15	&	25.1 	&	1.3E-15	&	73.6 	&	1.3E-15	\\
    \hline
  \end{tabular}
\end{table}

\begin{table}
\setlength\tabcolsep{3pt}
\renewcommand{\arraystretch}{1.25}  
  \centering\scriptsize
  \caption{Numerical results of low-rank approximation with different methods for matrix \eqref{mat2} ($r=0.4n$, $d=0.35n$)}\label{0.35n}
  \begin{tabular}{lcrrrrrr}
    \hline
    \multirow{2}{*}{Algorithms}  & & \multicolumn{2}{c}{$n=4000$} & \multicolumn{2}{c}{$n=8000$} & \multicolumn{2}{c}{$n=12000$} \\
    \cline{3-8}
     & & Time(s) & RelErr& Time(s) & RelErr& Time(s) & RelErr  \\
    \hline
    \multirow{1}{*}{Economy-sized SVD} &  & 10.2 & 4.9E-15 & 72.9 & 6.2E-15& 243.6 & 7.0E-15\\
\cline{2-8}
    \multirow{3}{*}{RSVD \cite{hmt11}}	&	$\tau=0$	&	\underline{1.4} 	&	1.1E-01	&	12.9 	&	1.0E-01	&	44.8 	&	1.0E-01	\\
&	$\tau=1$	&	2.2 	&	3.9E-02	&	18.9 	&	3.9E-02	&	64.1 	&	3.8E-02	\\
&	$\tau=2$	&	3.0 	&	3.5E-02	&	25.4 	&	3.5E-02	&	83.2 	&	3.4E-02	\\
\cline{2-8}
\multirow{3}{*}{TSR-SVD \cite{hmt11}}	&	$\tau=0$	&	1.7 	&	1.1E-01	&	14.8 	&	1.0E-01	&	42.6 	&	1.0E-01	\\
&	$\tau=1$	&	3.3 	&	3.9E-02	&	27.4 	&	3.9E-02	&	81.5 	&	3.8E-02	\\
&	$\tau=2$	&	5.1 	&	3.5E-02	&	39.8 	&	3.5E-02	&	120.4 	&	3.4E-02	\\
\cline{2-8}
\multirow{3}{*}{CoR-UTV \cite{kl18}}	&	$\tau=0$	&	\underline{1.4} 	&	1.1E-01	&	\underline{11.5} 	&	1.0E-01	&	\underline{36.2} 	&	1.0E-01	\\
&	$\tau=1$	&	2.3 	&	3.9E-02	&	17.2 	&	3.9E-02	&	55.6 	&	3.8E-02	\\
&	$\tau=2$	&	3.2 	&	3.5E-02	&	23.3 	&	3.5E-02	&	74.6 	&	3.4E-02	\\
\cline{2-8}
\multirow{3}{*}{RP-TSOD \cite{kc20}}	&	$\tau=0$	&	1.9 	&	1.1E-01	&	16.2 	&	1.0E-01	&	54.9 	&	1.0E-01	\\
&	$\tau=1$	&	2.7 	&	3.9E-02	&	22.4 	&	3.9E-02	&	74.5 	&	3.8E-02	\\
&	$\tau=2$	&	3.6 	&	3.5E-02	&	28.6 	&	3.5E-02	&	93.6 	&	3.4E-02	\\
\cline{2-8}
\multirow{3}{*}{PbP-QLP \cite{kc21}}	&	$\tau=0$	&	\textbf{1.3} 	&	1.1E-01	&	13.2 	&	1.0E-01	&	45.7 	&	1.0E-01	\\
&	$\tau=1$	&	2.3 	&	3.9E-02	&	19.2 	&	3.9E-02	&	65.0 	&	3.8E-02	\\
&	$\tau=2$	&	3.0 	&	3.5E-02	&	25.5 	&	3.5E-02	&	87.8 	&	3.4E-02	\\
\cline{2-8}
\multirow{3}{*}{randUTV \cite{mqh2019}}	&	$\tau=0$	&	7.1 	&	3.7E-15	&	52.3 	&	4.0E-15	&	207.8 	&	4.1E-15	\\
&	$\tau=1$	&	7.6 	&	3.7E-15	&	56.7 	&	3.9E-15	&	211.4 	&	4.0E-15	\\
&	$\tau=2$	&	8.7 	&	3.7E-15	&	60.5 	&	3.9E-15	&	216.6 	&	4.0E-15	\\
\cline{2-8}
\multirow{3}{*}{farPCA \cite{fy2023}}	    &	$\tau=0$	&	5.0 	&	3.7E-12	&	41.1 	&	6.2E-11	&	90.0 	&	3.6E-10	\\
&	$\tau=1$	&	8.1 	&	3.0E-13	&	63.6 	&	4.1E-11	&	138.4 	&	2.2E-11	\\
&	$\tau=2$	&	10.2 	&	5.3E-13	&	75.4 	&	4.7E-12	&	166.8 	&	1.3E-11	\\
\cline{2-8}
\multirow{3}{*}{Adaptive PCA \cite{dyxl2020}}	    &	$\tau=0$	&	2.8 	&	1.0E-14	&	22.6 	&	6.0E-14	&	74.1 	&	1.2E-12	\\
&	$\tau=1$	&	3.8 	&	1.3E-14	&	30.4 	&	8.9E-14	&	98.0 	&	2.5E-13	\\
&	$\tau=2$	&	4.5 	&	6.2E-14	&	36.4 	&	6.3E-14	&	117.5 	&	9.4E-13	\\
\cline{2-8}
\multirow{3}{*}{Adaptive RSVD \cite{hmt11}}	    &	$\tau=0$	&	5.5 	&	1.9E-13	&	57.0 	&	9.9E-13	&	188.8 	&	1.5E-13	\\
	                            &	$\tau=1$	&	6.5 	&	4.0E-15	&	64.5 	&	4.9E-15	&	219.8 	&	8.0E-15	\\
	                            &	$\tau=2$	&	7.4 	&	4.0E-15	&	72.0 	&	4.9E-15	&	235.7 	&	8.0E-15	\\
\cline{2-8}
\multirow{3}{*}{FSVD \cite{nsr}}	    &	$\tau=0$	&	1.8 	&	2.3E-12	&	14.1 	&	6.5E-11	&	46.1 	&	8.5E-11	\\
	                            &	$\tau=1$	&	2.7 	&	2.8E-15	&	21.6 	&	3.3E-15	&	68.6 	&	3.7E-15	\\
	                            &	$\tau=2$	&	3.6 	&	2.8E-15	&	28.9 	&	3.3E-15	&	92.3 	&	3.6E-15	\\
\cline{2-8}
\multirow{3}{*}{Algorithm \ref{alg:A}}	&	$\tau=0$	&	1.5 	&	5.8E-13	&	\textbf{10.5} 	&	3.2E-12	&	\textbf{29.3} 	&	4.9E-12	\\
	                                        &	$\tau=1$	&	2.4 	&	1.3E-15	&	18.0 	&	1.3E-15	&	52.3 	&	1.3E-15	\\
&	$\tau=2$	&	3.3 	&	1.2E-15	&	25.6 	&	1.3E-15	&	75.5 	&	1.3E-15	\\
    \hline
  \end{tabular}
\end{table}

From  Tables \ref{0.4n}-\ref{0.35n}, we have  the following observations.
\begin{enumerate}
  \item When the sampling size parameter is greater than or equal to matrix rank, all algorithms achieve high computational accuracy. However, when the sampling size parameter is less than  matrix rank, the computation accuracy of RSVD, TSR-SVD, CoR-UTV, RP-TSOD, and PbP-QLP is significantly reduced, whereas Algorithm \ref{alg:A} maintains high accuracy. This is because in Algorithm \ref{alg:A}, we use Algorithm \ref{alg:qr} that can independently determine the position of the rank and stop computation at that point, avoiding undersampling.
  \item Algorithm \ref{alg:A} demonstrates clear computational advantages over most competing methods in terms of runtime efficiency. This is evident across nearly all tested matrix sizes and sampling parameters, as shown in Tables \ref{0.4n}-\ref{0.35n}.
\end{enumerate}

Next, we investigate the changes in algorithms' computation time and accuracy as matrix rank varies.
We set $n = 8000$ and increase the rank from 400 to 8000.
At the same time, the sampling parameter $d$ is set equal to the rank $r$ to calculate the matrix approximation.
Experiments are conducted using subspace iteration counts of $\tau = 0$, $\tau = 1$, and $\tau = 2$. The comparison results of computation time and accuracy are presented in Figures \ref{8ct-1}-\ref{8ct-3}.

\begin{figure}[!ht]
  \centering
      \includegraphics[width=5.0in]{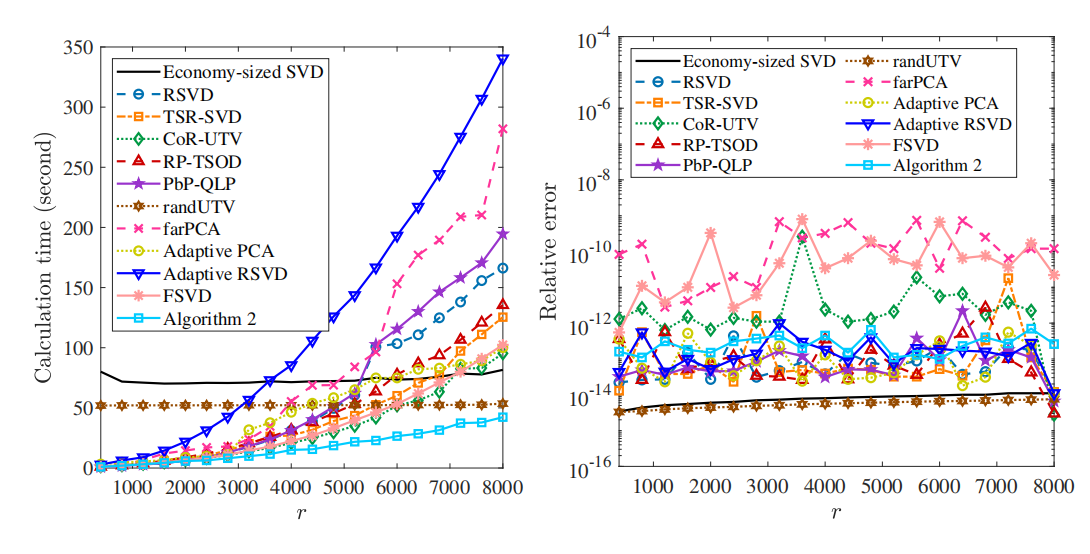}
  \caption{Calculation time and relative error of different algorithms for matrix \eqref{mat2} as the rank $r$ increases, with $n=8000$ and $\tau=0$.}\label{8ct-1}
\end{figure}

\begin{figure}[!ht]
  \centering
      \includegraphics[width=5.0in]{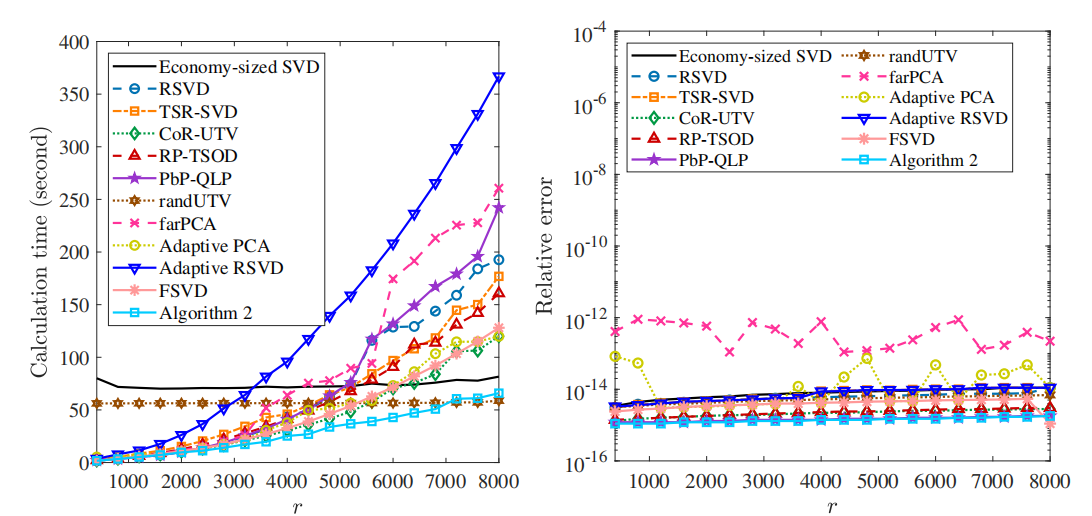}
  \caption{Calculation time and relative error of different algorithms for matrix \eqref{mat2} as the rank $r$ increases, with $n=8000$ and $\tau=1$.}\label{8ct-2}
\end{figure}

\begin{figure}[!ht]
  \centering
      \includegraphics[width=5.0in]{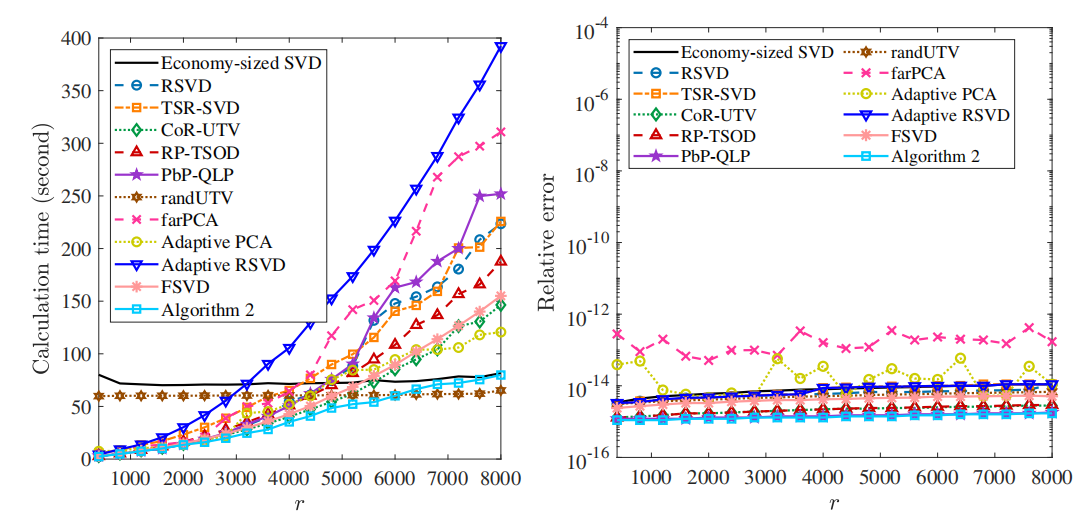}
  \caption{Calculation time and relative error of different algorithms for matrix \eqref{mat2} as the rank $r$ increases, with $n=8000$ and $\tau=2$.}\label{8ct-3}
\end{figure}

From Figures \ref{8ct-1}-\ref{8ct-3}, we  can conclude the following results.

\begin{enumerate}
  \item Algorithm \ref{alg:A} takes less computation time than other compared algorithms. As the matrix rank $r$ increases, the runtime gap between Algorithm \ref{alg:A} and other algorithms further enlarges. Therefore, the advantage of Algorithm \ref{alg:A} in computation time is more obvious when the rank is unknown.
  \item When computing strict low-rank approximations of matrices, relative error values are all small by all mentioned algorithms. 
  \item As shown in Figures \ref{8ct-1}-\ref{8ct-3}, without subspace iteration, the approximation error of Algorithm \ref{alg:A} plateaus at around $10^{-13}$. When subspace iteration is applied, the accuracy improves significantly, reaching approximately $10^{-15}$, which is the best among all compared methods. This improvement is attributed to the large matrix size and the slow decay of singular values. In such cases, subspace iteration enhances the alignment of the sampling space with the dominant singular directions, thereby improving the overall approximation quality.
\end{enumerate}

Therefore, through the above numerical analysis, Algorithm \ref{alg:A} has better performance than other numerical methods when the rank is unknown.

\section{Conclusion}
We propose a fast low-rank matrix approximation algorithm for matrix with unknown rank. The introduced randomized algorithm for basis extraction allows the acquisition of the matrix rank, addressing the limitations of empirically obtaining numerical ranks. The presented algorithms undergo theoretical analysis, and numerical experiments on random matrices indicate that the proposed algorithm exhibits superior time efficiency while maintaining accuracy comparable to existing algorithms.  Power method techniques are incorporated into the algorithms to enhance precision, and experimental results validate the effectiveness of this enhancement.
Finally, the proposed algorithm is applied to image compression and reconstruction, yielding favorable results and outperforming the fastest existing algorithm.


\end{document}